\newtheorem{theorem}{Theorem}[section]
\newtheorem{remark}{Remark}[section]
\newtheorem{proposition}[theorem]{Proposition}
\newcommand{\joinR}{\hspace{-.1em}}
\newcommand{\RomanI}{I}
\newcommand{\RomanII}{\mbox{\RomanI\joinR\RomanI}}
\newcommand{\RomanIII}{\mbox{\RomanI\joinR\RomanII}}
\newcommand{\RomanIV}{\mbox{\RomanI\joinR\RomanV}}
\newcommand{\RomanV}{V}
\newcommand{\RomanVI}{\mbox{\RomanV\joinR\RomanI}}
\newcommand{\RomanVII}{\mbox{\RomanV\joinR\RomanII}}
\newcommand{\RomanVIII}{\mbox{\RomanV\joinR\RomanIII}}
\begin{document}
\title[Two and a half dimensional magnetohydrodynamics system]{Remarks on the global regularity issue of the two and a half dimensional Hall-magnetohydrodynamics system}
 
\date{} 
 
\subjclass[2010]{35Q30; 76W05} 
\keywords{Keywords: Global well-posedness; Hall-magnetohydrodynamics system; Magnetohydrodynamics system; Navier-Stokes equations; Regularity criteria.}
 
\author[Rahman]{Mohammad Mahabubur Rahman}
\address{Department of Mathematics and Statistics, Texas Tech University, Lubbock, Texas, 79409-1042, U.S.A.}
\email{mohammad-mahabu.rahman@ttu.edu}
\author[Yamazaki]{Kazuo Yamazaki}
\address{Department of Mathematics and Statistics, Texas Tech University, Lubbock, Texas, 79409-1042, U.S.A.}
\email{kyamazak@ttu.edu} 

\begin{abstract}
Whether or not the solution to the $2\frac{1}{2}$-dimensional Hall-magnetohydrodynamics system starting from smooth initial data preserves its regularity for all time remains a challenging open problem. Although the research direction on component reduction of regularity criterion for Navier-Stokes equations and magnetohydrodynamics system have caught much attention recently, the Hall term has presented much difficulty. In this manuscript we discover a certain cancellation within the Hall term and obtain various new regularity criterion: first, in terms of a gradient of only the third component of the magnetic field; second, in terms of only the third component of the current density; third, in terms of only the third component of the velocity field; fourth, in terms of only the first and second components of the velocity field. As another consequence of the cancellation that we discovered, we are able to prove the global well-posedness of the $2\frac{1}{2}$-dimensional Hall-magnetohydrodynamics system with hyper-diffusion only for the magnetic field in the horizontal direction; we also obtained an analogous result in the 3-dimensional case via discovery of additional cancellations. These results extend and improve various previous works.
\end{abstract}

\maketitle

\section{Introduction}\label{Introduction}

\subsection{Motivation from physics and mathematics}\label{Motivation from physics and mathematics}

Initiated by Alfv$\acute{\mathrm{e}}$n \cite{A42a} in 1942, the study of magnetohydrodynamics (MHD) concerns the properties of electrically conducting fluids. For example, while fluid turbulence is often investigated through Navier-Stokes (NS) equations, MHD turbulence occurs in laboratory settings such as fusion confinement devices (e.g., reversed field pinch), as well as astrophysical systems (e.g., solar corona) and the conventional system of equations for such a study is that of the MHD. Lighthill in 1960 introduced the Hall-MHD system by adding a Hall term which actually arises upon writing the current density as the sum of the ohmic current and a Hall current that is perpendicular to the magnetic field (see \cite[equation (94)]{L60}). Ever since, the Hall-MHD system has found tremendous attractions due to its applicability: study of the sun (e.g., \cite{C98}); magnetic re-connection (e.g. \cite{DSDCSCM12, HG05}); turbulence (e.g. \cite{MH09}); star formation (e.g. \cite{W04}). In particular, the statistical study of magnetic re-connection events in \cite{DSDCSCM12} by Donato et al. focused on the two and a half dimensional (2$\frac{1}{2}$-D) Hall-MHD system, necessarily because the Hall term decouples from the rest of the system in the 2-D case (see equation \eqref{est 3}). It is well-known that despite much advances made in the mathematical analysis of the Hall-MHD system in the past decades, the global well-posedness of the $2\frac{1}{2}$-D Hall-MHD system remains open: e.g., ``\emph{Contrary to the usual MHD the global well-posedness in the $2\frac{1}{2}$ dimensional Hall-MHD is wide open}'' from \cite[Abstract]{CL14} by Chae and Lee. The purpose of this manuscript is to present some new cancellations within the Hall term (see \eqref{est 19}, \eqref{est 20}, \eqref{new1}, \eqref{new2}, and \eqref{new3}) and obtain new regularity criterion in terms of only a few components of the solution, as well as prove the global well-posedness of the $2\frac{1}{2}$-D Hall-MHD system with hyper-diffusion for magnetic field only in the horizontal direction (see also Theorem \ref{Theorem 2.5} in the 3-D case). In Remark \ref{Remark 2.1} we also list some interesting open problems for future works.

\subsection{Previous works}\label{Previous works}
Let us denote $\partial_{t} \triangleq \frac{\partial}{\partial t}, \partial_{j} \triangleq \frac{\partial}{\partial x_{j}}$ for $j \in \{1,2,3\}$, components of any vector by sub-index such as $x = (x_{1}, \hdots, x_{D})$, and $A \overset{(\cdot)}{\lesssim} B$ to imply the existence of a constant $C \geq 0$ of no dependence on any important parameter such that $A \leq CB$ due to equation $(\cdot)$. The spatial domain $\Omega$ of our interest will be $\mathbb{T}^{D}$ or $\mathbb{R}^{D}$ (see \cite[Remark 1]{ADFL11} concerning the difficulty in the case of a general domain). For clarity let us first present the 3-D Hall-MHD system. With $u: \mathbb{R}_{\geq 0} \times \Omega\mapsto \mathbb{R}^{3}$, $b: \mathbb{R}_{\geq 0} \times \Omega \mapsto \mathbb{R}^{3}$, $\pi: \mathbb{R}_{\geq 0} \times \Omega \mapsto \mathbb{R}$, and $j \triangleq \nabla \times b$, representing respectively velocity field, magnetic field, pressure field, and current density, as well as $\nu \geq 0$, $\eta \geq 0$, and $\epsilon \geq 0$ respectively the viscosity, magnetic diffusivity, and Hall parameter, this system reads  
\begin{subequations}\label{est 0}
\begin{align}
& \partial_{t} u + (u\cdot\nabla) u + \nabla \pi = \nu \Delta u + (b\cdot\nabla) b  \hspace{17mm} t > 0,\label{est 1}\\
& \partial_{t} b + (u\cdot\nabla) b = \eta \Delta b + (b\cdot\nabla) u - \epsilon \nabla \times ( j \times b) \hspace{4mm} t > 0, \label{est 2}\\
& \nabla\cdot u = 0, \nabla \cdot b = 0, 
\end{align}
\end{subequations} 
starting from initial data $(u_{0}, b_{0}) \triangleq (u,b) \rvert_{t=0}$. The case $\epsilon = 0$ reduces \eqref{est 0} to the MHD system, furthermore taking $b \equiv 0$ and assuming $\nu > 0$ transform \eqref{est 0} to the NS equations, and additionally taking $\nu = 0$ leads us to the Euler equations. Throughout the rest of this manuscript, we will assume $\nu, \eta > 0$. It can be immediately seen from \eqref{est 1}-\eqref{est 2} that the 2-D case of $u(t,x) = (u_{1}, u_{2})(t,x_{1}, x_{2})$, $b(t,x) = (b_{1}, b_{2})(t, x_{1}, x_{2})$ result in the decoupling of the Hall term from the rest; for this reason, physicists such as Donato et al. in \cite{DSDCSCM12} have turned to the $2\frac{1}{2}$-D case (see also \cite[Section 2.3.1]{MB02}): 
\begin{equation}\label{est 3} 
u(t,x) = (u_{1}, u_{2}, u_{3})(t, x_{1}, x_{2}) \hspace{1mm} \text{ and } \hspace{1mm} b(t,x) = (b_{1}, b_{2}, b_{3})(t, x_{1}, x_{2}) 
\end{equation}
that solves \eqref{est 0}. 

We now review recent developments on the mathematical theory of the Hall-MHD system. Acheritogaray, Degond, Frouvelle, and Liu \cite{ADFL11} proved the global existence of a weak solution to the 3-D Hall-MHD system in case $\Omega = \mathbb{T}^{3}$ by using a vector calculus identity 
\begin{equation}\label{key}
(\Theta \times \Psi) \cdot \Theta = 0 \hspace{3mm} \forall \hspace{1mm} \Theta, \Psi \in \mathbb{R}^{3} 
\end{equation}
which led to the zero contribution from the Hall term on the energy identity. This pioneering work led to many others, some of which we list and refer readers to their references: \cite{CDL14} on various well-posedness results in $\mathbb{R}^{3}$; \cite{CS13} on temporal decay in $\mathbb{R}^{3}$; \cite{CW15, CW16b} for partial regularity results; \cite{CWW15} for local well-posedness in case the magnetic diffusion $-\Delta b$ is generalized to $(-\Delta)^{\alpha} b$ for any $\alpha > \frac{1}{2}$; \cite{CW16a} for singularity formation in case of zero magnetic diffusion. 

Considering that the global regularity problem of the 3-D and even $2\frac{1}{2}$-D Hall-MHD system remain completely open, one natural approach would be to pursue Prodi-Serrin type regularity criteria (\cite{ESS03, P59, S62}). Such results have been extended to the MHD system, e.g., \cite{HX05, Z05} in terms of only $u$, as well as the Hall-MHD system, e.g., Chae and Lee who obtained various blow-up criterion such as 
\begin{equation}\label{est 10} 
\limsup_{t\nearrow T^{\ast}} (\lVert u(t) \rVert_{H^{m}}^{2} + \lVert b(t) \rVert_{H^{m}}^{2}) = \infty \text{ if and only if } \int_{0}^{T^{\ast}} (\lVert u \rVert_{BMO}^{2} + \lVert \nabla b \rVert_{BMO}^{2}) dt = \infty
\end{equation} 
for $m > \frac{5}{2}$ that is an integer in the 3-D case (see \cite[Theorem 2]{CL14}) while 
\begin{equation}\label{est 4} 
\limsup_{t\nearrow T^{\ast}} (\lVert u(t) \rVert_{H^{m}}^{2} + \lVert b(t) \rVert_{H^{m}}^{2}) = \infty \text{ if and only if } \int_{0}^{T^{\ast}} \lVert j \rVert_{BMO}^{2} dt = \infty 
\end{equation}
for $m > 2$ that is an integer in the $2\frac{1}{2}$-D case (see \cite[Remark 3]{CL14}). Furthermore, \cite[Theorem 1]{CL14} stated a Prodi-Serrin type regularity criteria for the 3-D Hall-MHD system, specifically that $u, \nabla b \in L_{T}^{r}L_{x}^{p}$ for $\frac{3}{p} + \frac{2}{r} \leq 1, p \in (3,\infty]$ (also \cite[Theorem 1.1]{Y15}), and it can be immediately generalized to show that $2\frac{1}{2}$-D Hall-MHD system is globally well-posed as long as 
\begin{equation}\label{est 5} 
\nabla b \in L_{T}^{r} L_{x}^{p} \text{ for } \frac{2}{p} + \frac{2}{r} \leq 1, p \in (2,\infty]. 
\end{equation} 
\begin{remark}\label{Remark 1.1}
The intuition behind \eqref{est 10} concerning why a bound on $\nabla b$ is needed when its counterpart is $u$, as well as why we only need a bound on $\nabla b$ in \eqref{est 5} is because the Hall term $\nabla \times (j\times b)$ is precisely more singular than those in the MHD system by one derivative and the  $2\frac{1}{2}$-D MHD system is known to be globally well-posed (the proof follows the 2-D case verbatim, e.g., \cite{ST83}).
\end{remark}

In the past few decades, a research direction on component reduction of such Prodi-Serrin type regularity criterion have flourished. As far back as 1999, Chae and Choe \cite{CC99} proved a regularity criteria in terms of only two components of the vorticity vector field $\omega \triangleq \nabla \times u$ for the 3-D NS equations; we also refer to \cite{KZ07} on a criteria in terms of $\partial_{3} u$ and \cite{CT08} in terms of $u_{3}$. In particular, let us briefly elaborate on the method of Kukavica and Ziane \cite{KZ06} which inspired many more results on related equations. Denoting a horizontal gradient and a horizontal Laplacian respectively by $\nabla_{h} \triangleq (\partial_{1}, \partial_{2}, 0)$ and $\Delta_{h} \triangleq \sum_{k=1}^{2} \partial_{k}^{2}$, Kukavica and Ziane observed that a solution to the 3-D NS equations satisfies  
\begin{subequations}\label{KZ}
\begin{align} 
&\frac{1}{2} \partial_{t} \lVert \nabla_{h} u \rVert_{L^{2}}^{2} + \nu \lVert \nabla \nabla_{h} u \rVert_{L^{2}}^{2}  \lesssim \int_{\mathbb{R}^{3}} \lvert u_{3} \rvert \lvert \nabla u \rvert \lvert \nabla \nabla_{h} u \rvert dx,\label{est 6}\\
&\frac{1}{2} \partial_{t} \lVert \nabla u \rVert_{L^{2}}^{2} + \nu \lVert \Delta u \rVert_{L^{2}}^{2} \lesssim \int_{\mathbb{R}^{3}} \lvert \nabla_{h} u \rvert \lvert \nabla u \rvert^{2} dx.\label{est 7} 
\end{align}
\end{subequations}
It follows that one can assume a bound on $u_{3}$, apply it in \eqref{est 6} to get a bound on $\nabla_{h} u$, and then use the bound on $\nabla_{h} u$ in \eqref{est 7} to attain the desired bound on $\nabla u$, although this extra step has always prevented one from attaining the criteria at a scaling-invariant level. The interesting part is how one can separate $u_{3}$ in \eqref{est 6}; it turns out that using divergence-free property of $u$, one can compute 
\begin{align}
&\frac{1}{2} \partial_{t} \lVert \nabla_{h} u \rVert_{L^{2}}^{2} + \nu \lVert \nabla \nabla_{h} u \rVert_{L^{2}}^{2} = \int_{\mathbb{R}^{3}}(u\cdot\nabla) u \cdot \Delta_{h} u dx \label{est 8}\\
=& - \sum_{i,l,k=1}^{2}\int_{\mathbb{R}^{3}}\partial_{k}u_{i}\partial_{i}u_{l}\partial_{k}u_{l} dx + \sum_{l,k=1}^{2}\int_{\mathbb{R}^{3}} u_{3} \partial_{k} (\partial_{3}u_{l}\partial_{k} u_{l}) dx + \sum_{i=1}^{3}\sum_{k=1}^{2}\int_{\mathbb{R}^{3}} u_{3} \partial_{i} (\partial_{k} u_{i} \partial_{k} u_{3}) dx. \nonumber
\end{align}
The second and third integrals already have $u_{3}$ separated. For the remaining integral of $\sum_{i,l,k=1}^{2}\int_{\mathbb{R}^{3}}\partial_{k}u_{i}\partial_{i}u_{l}\partial_{k}u_{l} dx$ which do not seem to have any $u_{3}$, strategic couplings and making use of the divergence-free property lead to the desired bound \eqref{est 6}; e.g., the sum of two terms $(i,l,k) = (1,1,2)$ and $(i,l,k) = (2,1,2)$ lead to 
\begin{equation}\label{est 9}
\int_{\mathbb{R}^{3}} \partial_{2} u_{1} \partial_{1} u_{1} \partial_{2} u_{1} + \partial_{2} u_{2} \partial_{2} u_{1}\partial_{2} u_{1} dx = \int_{\mathbb{R}^{3}} (\partial_{2} u_{1})^{2} (-\partial_{3} u_{3}) dx \lesssim \int_{\mathbb{R}^{3}} \lvert u_{3} \rvert \lvert \nabla u \rvert \lvert \nabla \nabla_{h} u \rvert dx.
\end{equation} 
For the MHD system, specifically \eqref{est 0} with $\epsilon = 0$, lengthy computations and strategic couplings of terms in $(u\cdot\nabla)u$ with $(u\cdot\nabla) b$, as well as $(b\cdot\nabla) b$ with $(b\cdot\nabla) u$, led to discovery of various hidden identities (e.g., \cite[Proposition 3.1]{Y14a}). However, such cancellations seemed virtually impossible for the Hall-MHD system; a curl operator makes it one more derivative singular and there is no other nonlinear term of same order to couple together. 

Subsequently, Chemin and Zhang \cite{CZ16} employed anisotropic Littlewood-Paley theory to obtain a regularity criteria for the 3-D NS equations in terms of only $u_{3}$ in a scaling-invariant space, although in Sobolev space $\dot{H}^{\frac{1}{2} + \frac{2}{p}}(\mathbb{R}^{3})$ for $p \in (4,6)$ rather than Lebesgue space $L^{p}(\mathbb{R}^{3})$ space (see also \cite{CZZ17}). This approach of anisotropic Littlewood-Paley theory was also successfully extended to the MHD system in \cite{Y16c} by discovery of some crucial cancellations, e.g., \cite[Equations (21)-(22)]{Y16c} (see also \cite{HZ20, L16}). Again, to the best of the authors' knowledge, an attempt to extend this approach to the Hall-MHD system has not been done due to the difficulty of the Hall term (see also \cite{WWZ20} for recent development on another approach). 

\section{Statement of main results}\label{Statement of main results}
We overcame the challenges described in Section \ref{Previous works} and obtained component reduction results of regularity criteria for the $2\frac{1}{2}$-D Hall-MHD system. For clarity we work on $\mathbb{R}^{2}$ hereafter although all of our works may be instantly extended to the case $x \in \mathbb{T}^{2}$ by straight-forward modifications.  One immediate convenience of working on the $2\frac{1}{2}$-D Hall-MHD system is that $H^{1}(\mathbb{R}^{2})$-bound on $(u,b)$ immediately implies higher regularity. This can be seen from the facts that an $H^{1}(\mathbb{R}^{2})$-bound gives $\int_{0}^{T} \lVert \Delta b \rVert_{L^{2}}^{2} dt < \infty$ from magnetic diffusive term, that $BMO(\mathbb{R}^{d})$ norm may be bounded by $\dot{H}^{\frac{d}{2}}(\mathbb{R}^{d})$ norm (e.g., \cite[Theorem 1.48]{BCD11}), and \eqref{est 4}; this also shows that $H^{1}(\mathbb{R}^{2})$-bound does not suffice in the 3-D case considering \eqref{est 10}. For clarity let us note that in the $2\frac{1}{2}$-D case 
\begin{equation}\label{j}
j = \nabla \times b = 
\begin{pmatrix}
\partial_{2}b_{3}& - \partial_{1} b_{3}& \partial_{1} b_{2} - \partial_{2} b_{1}
\end{pmatrix}^{T}.
\end{equation} 
Our first result is a criteria in terms of $\nabla b_{3}$ or equivalently $j_{1}$ and $j_{2}$; because we get it at the level of $\frac{2}{p} + \frac{2}{r} \leq 1, p \in (2,\infty]$, these are direct improvements of \eqref{est 5}. The heuristic that led us to pursue this result is as follows. On one hand, the $2\frac{1}{2}$-D MHD system is globally well-posed in $H^{m}(\mathbb{R}^{2})$ for any $m > 2$; on the other hand, if $b_{3} \equiv 0$, then the Hall term within the $2\frac{1}{2}$-D Hall-MHD system decouples and the resulting system is globally well-posed. Such intuitions indicate that even if $b_{3}$ does not necessarily vanish but as long as it is sufficiently small or bounded in some norm, the global well-posedness of the $2\frac{1}{2}$-D Hall-MHD system may still hold; the following result answers this question in positive. 

\begin{theorem}\label{Theorem 2.1}
Suppose that $(u_{0}, b_{0}) \in H^{m} (\mathbb{R}^{2}) \times H^{m} (\mathbb{R}^{2})$ where $m > 2$ is an integer and $\nabla\cdot u_{0} = \nabla\cdot b_{0} = 0$. If $(u,b)$ is a corresponding local smooth solution to the $2\frac{1}{2}$-D Hall-MHD system \eqref{est 0} emanating from $(u_{0}, b_{0})$ and 
\begin{equation}
\int_{0}^{T} \lVert \nabla b_{3} \rVert_{L^{p}}^{r} dt \overset{\eqref{j}}{=} \sum_{k=1}^{2} \int_{0}^{T} \lVert j_{k} \rVert_{L^{p}}^{r} dt < \infty \text{ where } \frac{2}{p} + \frac{2}{r}\leq 1, p \in (2,\infty],
\end{equation} 
then for all $t \in [0, T]$ 
\begin{equation}\label{est 27} 
\lVert u (t) \rVert_{H^{m}} + \lVert b(t) \rVert_{H^{m}} < \infty.
\end{equation} 
\end{theorem}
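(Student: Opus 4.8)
The plan is to reduce the theorem to a single \emph{a priori} $H^{1}(\mathbb{R}^{2})$ bound. As noted in the paragraph preceding the statement, once we control $\sup_{t \in [0,T]} ( \lVert \nabla u \rVert_{L^{2}}^{2} + \lVert \nabla b \rVert_{L^{2}}^{2} )$ together with $\int_{0}^{T} \lVert \Delta b \rVert_{L^{2}}^{2} \, dt$, the embedding $\dot{H}^{1}(\mathbb{R}^{2}) \hookrightarrow BMO(\mathbb{R}^{2})$ gives $\int_{0}^{T} \lVert j \rVert_{BMO}^{2} \, dt \lesssim \int_{0}^{T} \lVert \Delta b \rVert_{L^{2}}^{2} \, dt < \infty$, whence \eqref{est 4} upgrades this to \eqref{est 27}. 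I would first record the basic energy identity from testing \eqref{est 1} with $u$ and \eqref{est 2} with $b$: the Hall term drops out entirely since $\int \nabla \times (j \times b) \cdot b \, dx = \int (j \times b) \cdot j \, dx = 0$ by \eqref{key}, while the Lorentz and stretching terms cancel as usual, so that $\lVert u \rVert_{L^{2}}$ and $\lVert b \rVert_{L^{2}}$ are controlled on $[0,T]$.

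Next I would test \eqref{est 1} with $-\Delta u$ and \eqref{est 2} with $-\Delta b$ and add, producing
\begin{align}
&\frac{1}{2} \frac{d}{dt} ( \lVert \nabla u \rVert_{L^{2}}^{2} + \lVert \nabla b \rVert_{L^{2}}^{2} ) + \nu \lVert \Delta u \rVert_{L^{2}}^{2} + \eta \lVert \Delta b \rVert_{L^{2}}^{2} \nonumber \\
&\qquad = (\text{MHD nonlinearities}) + \epsilon \int_{\mathbb{R}^{2}} \nabla \times (j \times b) \cdot \Delta b \, dx. \nonumber
\end{align}
The MHD nonlinearities --- the couplings of $(u \cdot \nabla) u$ with $(b \cdot \nabla) b$ and of $(u \cdot \nabla) b$ with $(b \cdot \nabla) u$ --- are handled exactly as in the globally well-posed $2\frac{1}{2}$-D MHD system (Remark \ref{Remark 1.1}), yielding terms absorbable into the dissipation plus a factor of $\lVert \nabla u \rVert_{L^{2}}^{2} + \lVert \nabla b \rVert_{L^{2}}^{2}$. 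The genuinely new difficulty, and the main obstacle, is the Hall integral, which is one derivative more singular than anything in the MHD case and a priori involves $j_{3}$, on which we assume no control.

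The heart of the argument is a cancellation for the Hall integral. Using that the curl is self-adjoint and commutes with $\Delta$, I would rewrite $\int \nabla \times (j \times b) \cdot \Delta b \, dx = \int (j \times b) \cdot \Delta j \, dx$ and integrate by parts once more to get $-\sum_{k=1}^{2} \int \partial_{k}(j \times b) \cdot \partial_{k} j \, dx$; expanding $\partial_{k}(j \times b) = (\partial_{k} j \times b) + (j \times \partial_{k} b)$, the first piece vanishes pointwise because $(\partial_{k} j \times b) \cdot \partial_{k} j = 0$ by \eqref{key}, leaving $-\sum_{k=1}^{2} \int (j \times \partial_{k} b) \cdot \partial_{k} j \, dx$. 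The crucial structural fact in the $2\frac{1}{2}$-D setting is that, since $j_{1} = \partial_{2} b_{3}$ and $j_{2} = -\partial_{1} b_{3}$ by \eqref{j}, every surviving term of the triple product carries at least one derivative of $b_{3}$: the factors drawn from $j$ and from $\partial_{k} j$ cannot both be third components. Sorting accordingly, terms already exhibiting a \emph{first} derivative of $b_{3}$ (a factor $j_{1}$ or $j_{2}$) are kept; among the terms carrying a \emph{second} derivative of $b_{3}$, the purely-$b_{3}$ ones cancel pairwise after summing over $k$ and one integration by parts (an integral of $\partial_{2} b_{3}\, \partial_{1} \lvert \nabla b_{3} \rvert^{2}$ against one of $\partial_{1} b_{3}\, \partial_{2} \lvert \nabla b_{3} \rvert^{2}$), while the remaining mixed terms are integrated by parts once more to shift the extra derivative off $b_{3}$ onto neighbouring factors, returning a first derivative of $b_{3}$. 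The net effect is to express the whole Hall integral as a sum of integrals each containing exactly one factor $j_{1}$ or $j_{2} = \partial b_{3}$, the other factors being first or second derivatives of $b$ controlled by $\lVert \nabla b \rVert_{L^{2}}$ and $\lVert \Delta b \rVert_{L^{2}}$.

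It then remains to close the estimate. Each surviving integral is bounded by H\"older's inequality as $\lVert \nabla b_{3} \rVert_{L^{p}} \lVert \nabla b \rVert_{L^{2p/(p-2)}} \lVert \Delta b \rVert_{L^{2}}$; the Gagliardo--Nirenberg inequality $\lVert \nabla b \rVert_{L^{2p/(p-2)}} \lesssim \lVert \nabla b \rVert_{L^{2}}^{1 - \frac{2}{p}} \lVert \Delta b \rVert_{L^{2}}^{\frac{2}{p}}$ on $\mathbb{R}^{2}$ followed by Young's inequality then yields $\frac{\eta}{4} \lVert \Delta b \rVert_{L^{2}}^{2} + C \lVert \nabla b_{3} \rVert_{L^{p}}^{r} \lVert \nabla b \rVert_{L^{2}}^{2}$, the exponent $r = \frac{2p}{p-2}$ being exactly the one forced by the critical relation $\frac{2}{p} + \frac{2}{r} = 1$ (the subcritical case $\frac{2}{p} + \frac{2}{r} \le 1$ leaves additional room). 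Absorbing the dissipative contributions into the left-hand side and applying Gr\"onwall's inequality with the hypothesis $\int_{0}^{T} \lVert \nabla b_{3} \rVert_{L^{p}}^{r} \, dt < \infty$ produces the desired $H^{1}$ bound together with $\int_{0}^{T} \lVert \Delta b \rVert_{L^{2}}^{2} \, dt < \infty$, closing the argument through the reduction above. I expect the main obstacle throughout to be the bookkeeping of the Hall integral: recognizing that \eqref{key} deletes the most singular piece and that the residual second-order-in-$b_{3}$ terms either cancel or can be integrated by parts back to first order is precisely the cancellation the paper advertises.
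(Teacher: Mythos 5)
Your proposal is correct and follows essentially the same route as the paper: the same reduction to an a priori $H^{1}$ bound via \eqref{est 4}, the same rewriting $\int \nabla\times(j\times b)\cdot\Delta b\,dx = -\sum_{k}\int (j\times\partial_{k}b)\cdot\partial_{k}j\,dx$ using \eqref{key}, the same pairwise cancellation of the purely-$b_{3}$ terms (the paper's \eqref{est 19}--\eqref{est 20}, which it performs for each $k$ separately rather than after summing over $k$, and which it notes are not even strictly needed here since those terms already carry a factor $j_{1}$ or $j_{2}$), the same integration by parts on the $j_{3}$-bearing terms, and the same H\"older/Gagliardo--Nirenberg/Young closing with exponent $r=\tfrac{2p}{p-2}$ followed by Gr\"onwall.
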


The proof of Theorem \ref{Theorem 2.1} is not a two-step approach as in \cite{KZ06} (recall \eqref{KZ}); we directly work on the $H^{1}(\mathbb{R}^{2})$-estimate, discover crucial cancellations \eqref{est 19} and \eqref{est 20}. Thus, we are able to attain the criteria at the level of $\frac{2}{p} + \frac{2}{r} \leq 1, p \in (2,\infty]$, somewhat similarly to the works of \cite{LRY21, Y13a}. 

Next, having obtained a criteria in terms of $j_{1}$ and $j_{2}$ rather than $j$, a natural question is whether we can reduce to only one component of the current density which would be a further improvement of \eqref{est 4}; that is the content of our next result. 

\begin{theorem}\label{Theorem 2.2}
Suppose that $(u_{0}, b_{0}) \in H^{m} (\mathbb{R}^{2}) \times H^{m} (\mathbb{R}^{2})$ where $m > 2$ is an integer and $\nabla\cdot u_{0} = \nabla\cdot b_{0} = 0$. If $(u,b)$ is a corresponding local smooth solution to the $2\frac{1}{2}$-D Hall-MHD system \eqref{est 0} emanating from $(u_{0}, b_{0})$ and 
\begin{equation}
\int_{0}^{T} \lVert j_{3} \rVert_{L^{p}}^{r} dt  < \infty \text{ where } \frac{2}{p} + \frac{2}{r}\leq 1, p \in (2,\infty],
\end{equation} 
then for all $t \in [0, T]$ \eqref{est 27} holds. 
\end{theorem}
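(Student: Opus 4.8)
\emph{Proof strategy.} The plan is to close a single combined $H^{1}(\mathbb{R}^{2})$ estimate for $(u,b)$, isolating the Hall term as the only place where the hypothesis on $j_{3}$ enters; every other interaction is one already present in the $2\frac{1}{2}$-D MHD system, which closes unconditionally (cf.\ Remark \ref{Remark 1.1}). First I would test \eqref{est 1} with $-\Delta u$ and \eqref{est 2} with $-\Delta b$, add, and use $\nabla\cdot u=\nabla\cdot b=0$ to write
\begin{equation*}
\frac{1}{2}\partial_{t}(\lVert \nabla u\rVert_{L^{2}}^{2}+\lVert \nabla b\rVert_{L^{2}}^{2})+\nu\lVert \Delta u\rVert_{L^{2}}^{2}+\eta\lVert \Delta b\rVert_{L^{2}}^{2}=\mathcal{M}+\mathcal{H},
\end{equation*}
where $\mathcal{M}$ gathers the quadratic MHD couplings arising from $(u\cdot\nabla)u$, $(b\cdot\nabla)b$, $(u\cdot\nabla)b$, $(b\cdot\nabla)u$, and $\mathcal{H}\triangleq\epsilon\int_{\mathbb{R}^{2}}\nabla\times(j\times b)\cdot\Delta b\,dx$ is the Hall contribution. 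I would bound $\mathcal{M}$ by the two-dimensional Gagliardo--Nirenberg inequality together with the basic energy bound, absorbing the top-order derivatives into $\nu\lVert\Delta u\rVert_{L^{2}}^{2}+\eta\lVert\Delta b\rVert_{L^{2}}^{2}$ and leaving an $L^{1}_{t}$ Grönwall factor; this step needs no hypothesis and mirrors the global estimate for the $2\frac{1}{2}$-D MHD system.

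For $\mathcal{H}$, since $\nabla\times\Delta b=\Delta j$, two integrations by parts give
\begin{equation*}
\mathcal{H}=\epsilon\int_{\mathbb{R}^{2}}(j\times b)\cdot\Delta j\,dx=-\epsilon\sum_{k=1}^{2}\int_{\mathbb{R}^{2}}\partial_{k}(j\times b)\cdot\partial_{k}j\,dx,
\end{equation*}
and expanding $\partial_{k}(j\times b)=\partial_{k}j\times b+j\times\partial_{k}b$ lets me discard the first piece by the identity \eqref{key} applied with $\Theta=\partial_{k}j$, leaving $\mathcal{H}=-\epsilon\sum_{k=1}^{2}\int_{\mathbb{R}^{2}}(j\times\partial_{k}b)\cdot\partial_{k}j\,dx$. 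The decisive manipulation is to expand this integrand in components and invoke \eqref{j}, in particular $j_{1}=\partial_{2}b_{3}$ and $j_{2}=-\partial_{1}b_{3}$: this sorts the cubic integrand into a part $\mathcal{A}$ that depends on $b_{3}$ alone (through $\nabla b_{3}$ and $\nabla^{2}b_{3}$) and carries no factor of $j_{3}$, and a remaining part $\mathcal{B}$ in which every summand carries a factor $j_{3}$ or $\partial_{k}j_{3}$.

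The heart of the argument --- and the step I expect to be the main obstacle --- is the claim that $\int_{\mathbb{R}^{2}}\mathcal{A}\,dx=0$; this is the analogue for the $j_{3}$-criterion of the cancellations \eqref{est 19}--\eqref{est 20} used for Theorem \ref{Theorem 2.1}, and it is exactly what frees the estimate from any need to control $\nabla b_{3}$ at the $L^{p}$ level. Granting it, I would treat $\mathcal{B}$ by integrating by parts once more to move every derivative off $j_{3}$, so that $j_{3}$ itself (rather than $\partial_{k}j_{3}$) appears as a pointwise factor while the surviving derivatives of $b$ land on components of $\nabla b$ and $\nabla^{2}b$. Hölder's inequality then yields $\lvert\mathcal{B}\rvert\lesssim\lVert j_{3}\rVert_{L^{p}}\lVert\nabla b\rVert_{L^{q}}\lVert\Delta b\rVert_{L^{2}}$ with $\frac{1}{q}=\frac{1}{2}-\frac{1}{p}$, and the two-dimensional bound $\lVert\nabla b\rVert_{L^{q}}\lesssim\lVert\nabla b\rVert_{L^{2}}^{2/q}\lVert\Delta b\rVert_{L^{2}}^{1-2/q}$ followed by Young's inequality absorbs the resulting $\lVert\Delta b\rVert_{L^{2}}^{2}$ into the dissipation and leaves $C\lVert j_{3}\rVert_{L^{p}}^{r}\lVert\nabla b\rVert_{L^{2}}^{2}$ with $r=\frac{2p}{p-2}$, i.e.\ precisely $\frac{2}{p}+\frac{2}{r}=1$ (the case $p=\infty$, $r=2$ being immediate, and $\frac{2}{p}+\frac{2}{r}\le1$ following by interpolating the hypothesis).

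Feeding this into Grönwall's inequality against $\int_{0}^{T}\lVert j_{3}\rVert_{L^{p}}^{r}\,dt<\infty$ closes the $H^{1}$ bound, from which \eqref{est 27} follows by the propagation of higher regularity recorded after \eqref{j}. The only genuinely new point beyond Theorem \ref{Theorem 2.1} is thus the vanishing of $\int_{\mathbb{R}^{2}}\mathcal{A}\,dx$. One could alternatively try to deduce an $L^{r}_{T}L^{p}_{x}$ bound on $\nabla b_{3}$ from $j_{3}$ and then quote Theorem \ref{Theorem 2.1}, but the high-order Hall source $(b\cdot\nabla)j_{3}$ appearing in the $b_{3}$-equation obstructs this, which is why I expect the direct estimate above to be the cleaner route.
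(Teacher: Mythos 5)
Your proposal is correct and follows essentially the same route as the paper: the paper likewise closes a direct $H^{1}$ estimate, expands $-\epsilon\sum_{k}\int(j\times\partial_{k}b)\cdot\partial_{k}j\,dx$ into components, observes that the pure-$b_{3}$ part (your $\mathcal{A}$) vanishes — this is exactly the sum of \eqref{est 19} and \eqref{est 20}, not merely an analogue of them — and integrates by parts in the remaining terms to leave an undifferentiated factor of $j_{3}$ before applying H\"older, Gagliardo--Nirenberg, Young, and Gr\"onwall (see \eqref{est 26} and Remark \ref{Remark 3.1}). The one step you flagged as the potential obstacle is therefore already secured by the computations of Theorem \ref{Theorem 2.1}.
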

\noindent (Cf. \cite{Y14a} in which a regularity criteria for 3-D MHD system in terms of $u_{3}$ and $j_{3}$ was obtained). 

Next, having obtained a criteria in terms of $j_{1}$ and $j_{2}$ or just $j_{3}$ alone, a natural question is whether we can obtain a similar result in terms of components of velocity or vorticity $\omega = \nabla \times u$. Intuitively, this seems very difficult. As we mentioned, the $2\frac{1}{2}$-D MHD system is globally well-posed and the only difference with the $2\frac{1}{2}$-D Hall-MHD system is the Hall term $\nabla \times (j\times b)$. Upon any energy estimate such as the bounds on $L^{2}(\mathbb{R}^{2}), L^{p}(\mathbb{R}^{2})$ for $p \in (2,\infty]$, $\dot{H}^{1}(\mathbb{R}^{2})$ or $\dot{H}^{2}(\mathbb{R}^{2})$, the Hall term is always multiplied by a magnetic field again; therefore, any bound on $u$ does not seem to offer any help to handle the Hall term. The trick is that, as we will see in \eqref{Hall rewritten}, we may rewrite the Hall term as $\nabla \times ((b\cdot\nabla) b)$ and realize that this term appears naturally in the equation of vorticity $\omega$ (see equation \eqref{vorticity}); therefore, upon any estimate on vorticity, this ``Hall term within the equation of vorticity'' gets multiplied by vorticity $\omega$, giving us a chance to show that a sufficient bound on some components of vorticity may allow one to bound this nonlinear term. The following result answers this question in positive. 

\begin{theorem}\label{Theorem 2.3}
Suppose that $\nu = \eta = \epsilon = 1$. Suppose that $(u_{0}, b_{0}) \in H^{m} (\mathbb{R}^{2}) \times H^{m} (\mathbb{R}^{2})$ where $m > 2$ is an integer and $\nabla\cdot u_{0} = \nabla\cdot b_{0} = 0$.  
\begin{enumerate}
\item If $(u,b)$ is a corresponding local smooth solution to the $2\frac{1}{2}$-D Hall-MHD system \eqref{est 0} emanating from $(u_{0}, b_{0})$ and
\begin{equation}\label{est 52}
\int_{0}^{T} \lVert \Delta (\Delta u_{3}) \rVert_{L^{p}}^{r} dt < \infty \text{ where } \frac{2}{p} + \frac{2}{r} \leq 1, p \in (2,\infty), 
\end{equation} 
or 
\begin{equation}\label{est 53}
\int_{0}^{T} \lVert \Delta (\Delta u_{3}) \rVert_{BMO}^{2} dt < \infty, 
\end{equation} 
then for all $t \in [0, T]$ \eqref{est 27} holds. 
\item If $(u,b)$ is a corresponding local smooth solution to the $2\frac{1}{2}$-D Hall-MHD system \eqref{est 0} emanating from $(u_{0}, b_{0})$ and
\begin{equation}\label{est 61}
\int_{0}^{T} \lVert \Delta (\Delta u_{k})  \rVert_{L^{p_{k}}}^{r_{k}} dt < \infty   \text{ where } \frac{2}{p_{k}} + \frac{2}{r_{k}} \leq 1, p_{k} \in (2,\infty), 
\end{equation} 
or 
\begin{equation}\label{est 62}
\int_{0}^{T} \lVert \Delta (\Delta u_{k}) \rVert_{BMO}^{2} dt < \infty 
\end{equation} 
for both $k \in \{1,2\}$, then for all $t \in [0, T]$ \eqref{est 27} holds. 
\end{enumerate} 
\end{theorem}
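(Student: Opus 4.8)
The plan is to propagate only the $H^{1}(\mathbb{R}^{2})$-norm of $(u,b)$, which already forces \eqref{est 27}: an $H^{1}$-bound produces $\int_{0}^{T}\lVert\Delta b\rVert_{L^{2}}^{2}\,dt<\infty$ from the magnetic dissipation, hence $\int_{0}^{T}\lVert j\rVert_{BMO}^{2}\,dt<\infty$ through $\dot{H}^{1}(\mathbb{R}^{2})\hookrightarrow BMO$, and then \eqref{est 4} bootstraps to all of $H^{m}$. Testing \eqref{est 1} with $-\Delta u$ and \eqref{est 2} with $-\Delta b$, the $L^{2}$-energy stays globally bounded (the Hall term being energy-neutral by \eqref{key}) and every nonlinearity except the Hall term is exactly that of the $2\frac{1}{2}$-D MHD system, which closes by the two-dimensional Ladyzhenskaya inequality as in the known global $H^{1}$-theory. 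The entire difficulty therefore concentrates in the single Hall contribution
\begin{equation*}
\mathcal{K}\triangleq\epsilon\int_{\mathbb{R}^{2}}\nabla\times\big((b\cdot\nabla)b\big)\cdot\Delta b\,dx,
\end{equation*}
where I have already inserted the rewriting \eqref{Hall rewritten}; this is also the Hall contribution to the $L^{2}$-estimate of the current $j=\nabla\times b$, and it is precisely the term entering the vorticity equation \eqref{vorticity}.

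The structural observation driving the proof is that, in the $2\frac{1}{2}$-D geometry, the third component of $(b\cdot\nabla)b$ is exactly the Lorentz force of the scalar $u_{3}$-equation, while $\big(\nabla\times((b\cdot\nabla)b)\big)_{3}=\partial_{1}\big((b\cdot\nabla)b_{2}\big)-\partial_{2}\big((b\cdot\nabla)b_{1}\big)$ is assembled from the Lorentz forces of the $u_{1}$- and $u_{2}$-equations. I would therefore substitute
\begin{equation*}
(b\cdot\nabla)b_{k}=\partial_{t}u_{k}+(u\cdot\nabla)u_{k}+\partial_{k}\pi-\nu\Delta u_{k},
\end{equation*}
taking $k=3$ for part (1), where $\partial_{3}\pi=0$, and $k\in\{1,2\}$ for part (2). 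In part (2) the two horizontal substitutions enter only through the antisymmetric combination $\partial_{1}(\cdot)-\partial_{2}(\cdot)$ produced by the third component of the curl, so the pressure cancels identically and the dissipative pieces assemble into $-\nu\Delta\omega_{3}=-\nu\Delta(\partial_{1}u_{2}-\partial_{2}u_{1})$; this is exactly why part (2) must control both $u_{1}$ and $u_{2}$, whereas part (1) needs $u_{3}$ alone.

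It is the dissipative piece $-\nu\Delta u_{k}$ that is decisive. Transferring, by integration by parts, the two Laplacians accompanying it (one from $\Delta b$, one from the curl) onto the velocity factor yields a term of the schematic shape $\pm\epsilon\nu\int_{\mathbb{R}^{2}}\Delta(\Delta u_{k})\,g\,dx$, in which $g$ is a \emph{first-order} magnetic quantity ($g=j_{3}$ in part (1), $g=\nabla b_{3}$ in part (2)). This is the genesis of the fourth-order object $\Delta(\Delta u_{k})$ in the hypotheses: two derivatives are inherited from the magnetic diffusion through the substitution, and two more are transferred in the course of the energy estimate. The hypothesis puts $\Delta(\Delta u_{k})$ in $L_{t}^{r}L_{x}^{p}$; pairing it by H\"older against $g$ and closing the resulting differential inequality within the prescribed range $\frac{2}{p}+\frac{2}{r}\leq1$, $p\in(2,\infty)$, is the crux of the matter, and it is here that the two-dimensional interpolation inequalities and the bilinear structure exposed by the new cancellations \eqref{new1}--\eqref{new3} are indispensable. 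The endpoint criteria \eqref{est 53} and \eqref{est 62} would be obtained by the identical scheme with H\"older's inequality replaced by $\mathcal{H}^{1}$--$BMO$ duality.

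Two auxiliary pieces of the substitution remain, and I expect one of them to be the principal obstacle. The transport term $(u\cdot\nabla)u_{k}$ is strictly subcritical and is absorbed into the dissipation by interpolation. The genuinely delicate term is the time derivative $\partial_{t}u_{k}$: integrating it by parts in time produces a perfect time-derivative, to be carried as a modification of the energy functional and estimated at the endpoints, together with a remainder in which $\partial_{t}$ falls on the magnetic factor; substituting \eqref{est 2} for that time derivative must then be done \emph{without} resurrecting the original Hall singularity, which is the most delicate accounting in the argument. The main obstacle, however, is the reduction itself: a complete treatment of $\mathcal{K}$ must use \eqref{new1}--\eqref{new3} (for instance the $2\frac{1}{2}$-D identity $(j\cdot\nabla)b_{3}\equiv0$) to certify that the components of the Hall term \emph{not} covered by the prescribed velocity components are controlled, after integration by parts and interpolation, by the dissipation and the already-propagated energy; establishing this control, rather than the substitution, is where I anticipate the real work. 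Once all terms are either absorbed or placed under a time integral dominated by $\int_{0}^{T}\lVert\Delta(\Delta u_{k})\rVert_{L^{p}}^{r}\,dt$, Gr\"onwall's inequality closes the $H^{1}$-bound on $[0,T]$ and \eqref{est 27} follows.
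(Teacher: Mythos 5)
Your proposal identifies the right starting point (the rewriting \eqref{Hall rewritten} and the link to the vorticity equation \eqref{vorticity}) but then takes a route --- substituting the momentum equation for $(b\cdot\nabla)b_{k}$ inside the Hall term of a direct $H^{1}$-estimate --- that is not the paper's and that contains two unresolved gaps, both of which you yourself flag as ``the real work.'' The paper instead uses the Chae--Wolf change of variables: since, after \eqref{Hall rewritten}, the Hall term in \eqref{est 2} is the exact negative of the forcing $\nabla\times((b\cdot\nabla)b)$ in \eqref{vorticity} (with $\epsilon=1$), the quantities $z^{1}=b+\omega$ and $z^{2}=\nabla\times z^{1}=j+\nabla\times\omega$ satisfy equations containing \emph{no} Hall term, and are bounded in $L_{T}^{\infty}L_{x}^{2}\cap L_{T}^{2}\dot{H}_{x}^{1}$ unconditionally (Propositions \ref{Proposition 3.1}--\ref{Proposition 3.2}). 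The hypothesis \eqref{est 52} enters only in the separate estimate of $(\nabla\times\omega)_{3}=-\Delta u_{3}$ (Proposition \ref{Proposition 3.3}), where the sole dangerous term reduces to $\int_{\mathbb{R}^{2}}(b\cdot\nabla)b_{3}\,\Delta(\Delta u_{3})\,dx$ and is paired directly against the assumed norm; no time derivative of $u$ ever appears. One then recovers $j_{3}=z^{2}_{3}-(\nabla\times\omega)_{3}\in L_{T}^{4}L_{x}^{4}$ and concludes by Theorem \ref{Theorem 2.2} (respectively $j_{1},j_{2}$ and Theorem \ref{Theorem 2.1} for part (2)). This final reduction is the only place the cancellations enter, and they are \eqref{est 19}--\eqref{est 20}; the identities \eqref{new1}--\eqref{new3} you invoke belong to the three-dimensional Theorem \ref{Theorem 2.5} and play no role here.

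Concretely, two steps of your scheme fail as written. First, the components of the Hall term not covered by your substitution cannot be closed in a direct $H^{1}$-estimate of $b$: in part (1), the piece $\epsilon\int_{\mathbb{R}^{2}}(b\cdot\nabla)b_{1}\,\Delta j_{1}+(b\cdot\nabla)b_{2}\,\Delta j_{2}\,dx$ produces, after one integration by parts, terms of the schematic shapes $\int|b|\,|\nabla^{2}b|^{2}\,dx$ and $\int|\nabla b|^{2}|\nabla^{2}b|\,dx$, whose natural bounds $\lVert b\rVert_{L^{\infty}}\lVert\Delta b\rVert_{L^{2}}^{2}$ and $\lVert\nabla b\rVert_{L^{2}}\lVert\Delta b\rVert_{L^{2}}^{2}$ carry a coefficient that is bounded but not small, hence can be absorbed neither into $\eta\lVert\Delta b\rVert_{L^{2}}^{2}$ nor into a Gr\"onwall factor. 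This is precisely the supercriticality of the Hall term described before the statement of Theorem \ref{Theorem 2.3}, and it is why the paper moves the entire estimate into the variables $z^{1},z^{2}$ rather than working on $b$ directly. Second, your treatment of $\partial_{t}u_{k}$ is a placeholder, not an argument: after integrating by parts in time, the remainder carries $\partial_{t}$ on a magnetic factor of the form $j_{3}$ or $\Delta j_{3}$, and substituting \eqref{est 2} for that time derivative reintroduces $\nabla\times(j\times b)$ with additional derivatives, i.e., a term strictly more singular than the one you set out to control. Neither difficulty admits an incremental fix within your framework; the resolution is to adopt $z^{1}=b+\omega$ so that the Hall term is cancelled before any estimate is attempted.
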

After this work was completed, we were informed of the results in \cite{HAHZ16} in which the authors also obtained several regularity criterion of the 3-D Hall-MHD system in terms of $u$ in a high-order norm, e.g., $\omega \in L_{T}^{2} \dot{H}_{x}^{m}$ for $m > \frac{5}{2}$ in \cite[Theorem 1.3]{HAHZ16}. In comparison, while Theorem \ref{Theorem 2.3} is concerned with the $2\frac{1}{2}$-D case, its criterion are on only a few number of components of $u$. 

Our last result in the $2\frac{1}{2}$-D case is about global well-posedness of a certain system of equations with magnetic hyper-diffusion. In \cite[Theorem 2.3]{Y19a}, the global well-posedness of the following system was shown:
\begin{subequations}\label{est 68}
\begin{align}
&\partial_{t} u + (u\cdot\nabla) u + \nabla \pi  = \nu \Delta u + (b\cdot\nabla) b  \hspace{20mm} t > 0, \label{est 63} \\
&\partial_{t} b + (u\cdot\nabla) b + \eta \Lambda^{3} b = (b\cdot\nabla) u - \epsilon \nabla \times (j\times b)  \hspace{5mm} t > 0.  \label{est 64} 
\end{align}
\end{subequations} 
The intuition behind why we need precisely one more derivative in the magnetic diffusion, namely $\Lambda^{3} b$ rather than $-\Delta b$, is similar to the explanation in Remark \ref{Remark 1.1}. Inspired partially by \cite{YJW19} concerning the 3-D NS equations with various powers of fractional diffusion in different directions, we consider the following system:
\begin{subequations}\label{est 67}
\begin{align}
&\partial_{t} u + (u\cdot\nabla) u + \nabla \pi  =\nu \Delta u +  (b\cdot\nabla) b \hspace{37mm} t > 0, \label{est 65} \\
&\partial_{t} b + (u\cdot\nabla) b + \eta_{h} \Lambda^{3} b_{h} + \eta_{v} \Lambda^{2} b_{v} = (b\cdot\nabla) u - \epsilon \nabla \times (j\times b) \hspace{5mm} t > 0,  \label{est 66} 
\end{align}
\end{subequations} 
where
\begin{equation}\label{est 77}
b_{h} \triangleq  
\begin{pmatrix}
b_{1} & b_{2} & 0 
\end{pmatrix}^{T} \text{ and }  b_{v} \triangleq  
\begin{pmatrix}
0 & 0 & b_{3} 
\end{pmatrix}^{T}.
\end{equation} 
Clearly the system \eqref{est 67} has weaker magnetic diffusion in the vertical direction than \eqref{est 68}; in fact, it has same magnetic diffusion strength as the classical Hall-MHD system. Nonetheless, we are able to prove its global well-posedness as a consequence of the proofs of Theorems \ref{Theorem 2.1}-\ref{Theorem 2.3}. 

\begin{theorem}\label{Theorem 2.4}
 Suppose that $(u_{0}, b_{0}) \in H^{m} (\mathbb{R}^{2}) \times H^{m} (\mathbb{R}^{2})$ where $m > 2$ is an integer and $\nabla\cdot u_{0} = \nabla\cdot b_{0} = 0$.  Then there exists a unique solution 
 \begin{equation}
u \in L^{\infty} ((0,\infty); H^{m} (\mathbb{R}^{2})) \cap L^{2} ((0,\infty); H^{m+1}(\mathbb{R}^{2})), b \in L^{\infty} ((0,\infty);H^{m}(\mathbb{R}^{2})) 
\end{equation} 
such that 
\begin{equation}
b_{h} \in L^{2} ((0,\infty); H^{m+ \frac{3}{2}} (\mathbb{R}^{2})), b_{v} \in L^{2} ((0,\infty); H^{m+1}(\mathbb{R}^{2}))
\end{equation} 
to \eqref{est 67}, and $(u,b) \rvert_{t=0} = (u_{0}, b_{0})$. 
\end{theorem}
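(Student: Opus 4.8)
The plan is to follow the standard route for such global well-posedness results: establish local existence and uniqueness of a smooth solution together with a continuation criterion, derive a global-in-time a priori bound on $\lVert (u,b)(t) \rVert_{H^{m}}$, and then invoke the continuation criterion to upgrade the local solution to a global one. Local well-posedness on some maximal interval $[0,T^{\ast})$, with the parabolic gains $u \in L^{2}_{loc}H^{m+1}$, $b_{h} \in L^{2}_{loc}H^{m+\frac{3}{2}}$, $b_{v} \in L^{2}_{loc}H^{m+1}$ coming respectively from the diffusion operators $\nu\Delta u$, $\eta_{h}\Lambda^{3}b_{h}$, $\eta_{v}\Lambda^{2}b_{v}$ in \eqref{est 67}, follows from a routine mollification or Galerkin scheme together with energy estimates, exactly as for \eqref{est 68}; uniqueness follows from an $L^{2}$ estimate on the difference of two solutions. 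The entire content is therefore the global a priori estimate, and within it the only genuinely new difficulty is the Hall term $-\epsilon\nabla\times(j\times b)$ in \eqref{est 66}.

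First I would record the $2\frac{1}{2}$-D structure of the Hall term. Using the rewriting $\nabla\times(j\times b)=\nabla\times((b\cdot\nabla)b)$ noted earlier, together with $\partial_{3}\equiv0$ and $\nabla\cdot b=0$, a direct computation gives the decomposition
\begin{equation}
\nabla\times(j\times b)=\left(\partial_{2}[(b\cdot\nabla)b_{3}],\ -\partial_{1}[(b\cdot\nabla)b_{3}],\ (b\cdot\nabla)j_{3}\right)^{T},
\end{equation}
so that the horizontal part of the Hall term feeds only $b_{h}$ and the vertical part feeds only $b_{v}$. The $L^{2}$ energy identity is clean because \eqref{key} forces $\int \nabla\times(j\times b)\cdot b\,dx=\int (j\times b)\cdot j\,dx=0$, which together with the diffusion yields the global bounds $u,b\in L^{\infty}_{t}L^{2}_{x}$ and the associated dissipation. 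The crux is the $\dot{H}^{m}$ estimate. Applying $D^{m}$ to \eqref{est 66}, pairing with $D^{m}b$, and using the decomposition above, the two Hall contributions become, after one integration by parts moving the operator $\nabla_{h}^{\perp}=(\partial_{2},-\partial_{1},0)$,
\begin{equation}
-\epsilon\int D^{m}[(b\cdot\nabla)b_{3}]\,D^{m}j_{3}\,dx-\epsilon\int D^{m}[(b\cdot\nabla)j_{3}]\,D^{m}b_{3}\,dx .
\end{equation}
Their leading pieces (all $m$ derivatives on $b_{3}$, respectively on $j_{3}$) sum to $-\epsilon\int (b\cdot\nabla)[D^{m}b_{3}\,D^{m}j_{3}]\,dx=\epsilon\int(\nabla\cdot b)D^{m}b_{3}\,D^{m}j_{3}\,dx=0$; this exact cancellation is the incarnation in the $H^{m}$ estimate of the cancellations discovered in the proofs of Theorems \ref{Theorem 2.1}--\ref{Theorem 2.3}, and it is precisely what removes the one-derivative loss of the Hall term.

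What survives are the commutators $[D^{m},b\cdot\nabla]b_{3}$ and $[D^{m},b\cdot\nabla]j_{3}$, paired respectively with $D^{m}j_{3}$ and $D^{m}b_{3}$. By Kato--Ponce estimates these are bounded by expressions of the type $(\lVert\nabla b\rVert_{L^{\infty}}\lVert b_{3}\rVert_{\dot{H}^{m}}+\lVert b\rVert_{\dot{H}^{m}}\lVert\nabla b_{3}\rVert_{L^{\infty}})\lVert b_{h}\rVert_{\dot{H}^{m+1}}$ and $(\lVert\nabla b\rVert_{L^{\infty}}\lVert b_{h}\rVert_{\dot{H}^{m+1}}+\lVert b\rVert_{\dot{H}^{m}}\lVert\nabla^{2}b_{h}\rVert_{L^{\infty}})\lVert b_{3}\rVert_{\dot{H}^{m}}$, where I have used $\lVert j_{3}\rVert_{\dot{H}^{m}}\lesssim\lVert b_{h}\rVert_{\dot{H}^{m+1}}$. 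The factor $\lVert b_{h}\rVert_{\dot{H}^{m+1}}$ is exactly where the extra derivative of the horizontal hyper-diffusion is spent: interpolating $\lVert b_{h}\rVert_{\dot{H}^{m+1}}\leq\lVert b_{h}\rVert_{\dot{H}^{m}}^{1/3}\lVert b_{h}\rVert_{\dot{H}^{m+3/2}}^{2/3}$ and applying Young's inequality, one absorbs a small multiple of the $\dot{H}^{m+3/2}$ dissipation of $b_{h}$ and is left with a coefficient built from low-order $L^{\infty}$ norms times $\lVert(u,b)\rVert_{\dot{H}^{m}}^{2}$; this is why $\Lambda^{3}$ is needed on $b_{h}$ while the MHD-level $\Lambda^{2}$ on $b_{v}$ suffices, matching the heuristic of Remark \ref{Remark 1.1}. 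The remaining terms, namely the Navier--Stokes advection, the Lorentz force $(b\cdot\nabla)b$ coupled with the stretching $(b\cdot\nabla)u$, and the advection $(u\cdot\nabla)b$, are handled exactly as for the globally well-posed $2\frac{1}{2}$-D MHD system, their top-order pieces either cancelling or being absorbed into the $\nu\Delta u$ and $\eta_{v}\Lambda^{2}b_{v}$ dissipation.

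I expect the main obstacle to be the bookkeeping required to turn the resulting differential inequality into a genuinely global bound: after the cancellation the estimate has the schematic form $\frac{d}{dt}\lVert(u,b)\rVert_{\dot{H}^{m}}^{2}+(\text{full dissipation})\lesssim C(t)\lVert(u,b)\rVert_{\dot{H}^{m}}^{2}$, and one must verify $\int_{0}^{\infty}C(t)\,dt<\infty$ using only lower-order global bounds and the dissipation rather than the $H^{m}$ norm itself, which would yield merely local-in-time control. Concretely, the coefficients $\lVert\nabla b\rVert_{L^{\infty}}$, $\lVert\nabla b_{3}\rVert_{L^{\infty}}$, $\lVert\nabla^{2}b_{h}\rVert_{L^{\infty}}$ must be controlled by interpolating between a globally bounded low-order norm and a dissipation norm that is square-integrable in time, so that each is placed in $L^{1}_{t}$ or $L^{2}_{t}$; establishing the requisite global low-order ($L^{2}$ and $\dot{H}^{1}$-type) bounds for the Hall system, where the same cancellation and anisotropy must already be exploited at low order and where the $2$-D critical Sobolev embeddings are borderline, is the delicate point. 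Once these coefficients are shown to be time-integrable, Gronwall's inequality closes the $\dot{H}^{m}$ estimate on $[0,\infty)$, the continuation criterion upgrades the local solution to a global one, and the stated regularity $u\in L^{2}H^{m+1}$, $b_{h}\in L^{2}H^{m+\frac{3}{2}}$, $b_{v}\in L^{2}H^{m+1}$ is read off directly from the three dissipation terms.
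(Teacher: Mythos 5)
Your handling of the Hall term is correct and is in fact a genuinely different organization of the cancellation than the one the paper uses. You rewrite $\nabla\times(j\times b)=\nabla\times((b\cdot\nabla)b)$ and observe that in the $2\frac{1}{2}$-D, divergence-free setting $(b\cdot\nabla)=b_{1}\partial_{1}+b_{2}\partial_{2}$, so the horizontal components reduce to $(\partial_{2},-\partial_{1},0)$ acting on the scalar $(b\cdot\nabla)b_{3}$ and the vertical one to $(b\cdot\nabla)j_{3}$; the top-order pieces then cancel by the skew-symmetry of the transport operator, $\int (b\cdot\nabla)f\, g+(b\cdot\nabla)g\, f\,dx=0$. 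The paper instead expands $-(j\times\partial_{k}b)\cdot\partial_{k}j$ componentwise and cancels the two pure-$b_{3}$ terms by hand in \eqref{est 19}--\eqref{est 20}; in your formulation those pure-$b_{3}$ terms never appear, because every occurrence of $(b\cdot\nabla)$ already carries a factor of $b_{h}$. Both routes leave only terms containing one factor of $\nabla b_{h}$ and one top-order derivative of $b_{h}$, which is exactly what the horizontal hyper-diffusion $\Lambda^{3}b_{h}$ can absorb, so this part of your proposal is sound, if differently packaged.

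The gap is in how you close the estimate. The paper never performs the $H^{m}$ energy estimate on \eqref{est 67}: it proves only the $\dot{H}^{1}$ bound, where the surviving Hall contributions are trilinear integrals $\int \lvert \nabla b\rvert \lvert \nabla b_{h}\rvert \lvert \nabla^{2} b_{h}\rvert\,dx$ estimated by $\lVert \nabla b\rVert_{L^{2}}\lVert \Lambda^{\frac{3}{2}}b_{h}\rVert_{L^{2}}\lVert \Lambda^{\frac{5}{2}}b_{h}\rVert_{L^{2}}$ with no $L^{\infty}$ norms, the factor $\lVert \Lambda^{\frac{3}{2}}b_{h}\rVert_{L^{2}}^{2}\in L_{t}^{1}$ being supplied by the energy identity \eqref{est 69}; it then jumps from $H^{1}$ to $H^{m}$ via the blow-up criterion \eqref{est 4}, since the $\dot{H}^{1}$ dissipation gives $\Delta b\in L_{t}^{2}L_{x}^{2}$ and hence $j\in L_{t}^{2}BMO_{x}$. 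Your plan instead runs the $H^{m}$ estimate directly with Kato--Ponce commutator bounds whose Gronwall coefficients are $\lVert\nabla b\rVert_{L^{\infty}}$, $\lVert\nabla b_{3}\rVert_{L^{\infty}}$, $\lVert\nabla^{2}b_{h}\rVert_{L^{\infty}}$, and you flag that proving these time-integrable ``is the delicate point'' --- but that delicate point is the entire content of the theorem, and as written the argument is circular there: time-integrability of those $L^{\infty}$ norms needs roughly $b_{v}\in L_{t}^{2}H^{2+}$ and $b_{h}\in L_{t}^{2}H^{3+}$, which is unavailable before a high-order estimate closes, and even granting the $H^{1}$ bound one only gets $b_{v}\in L_{t}^{2}H^{2}$, which misses $\lVert\nabla b_{v}\rVert_{L^{\infty}}\in L_{t}^{2}$ at the borderline since $\dot{H}^{1}(\mathbb{R}^{2})\not\hookrightarrow L^{\infty}$. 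To repair this you should either specialize your commutator estimate to $m=1$ using $L^{4}$--$L^{2}$--$L^{4}$ H\"older in place of $L^{\infty}$ (which reproduces \eqref{est 73}--\eqref{est 74}) and then invoke \eqref{est 4}, or set up a genuine induction on the order of the estimate; without one of these the Gronwall step does not close.
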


As we emphasized, the crux of the proofs of Theorems \ref{Theorem 2.1}-\ref{Theorem 2.4} rely on the cancellations \eqref{est 19} and \eqref{est 20}. After obtaining Theorems \ref{Theorem 2.1}-\ref{Theorem 2.4}, the second author Yamazaki shared these results with Prof. Mimi Dai and her comments inspired us to further try to generalize to the 3-D case. We were quite surprised to learn that the cancellations that we discovered, specifically \eqref{est 19} and \eqref{est 20}, are indeed general and can be extended to the 3-D case (see \eqref{new1}, \eqref{new2}, and \eqref{new3}). As we elaborated already, an $H^{1}(\mathbb{R}^{3})$-bound on the 3-D Hall-MHD system does not suffice to lead to higher regularity; thus, at the time of writing this manuscript, it is not clear how to extend Theorems \ref{Theorem 2.1}-\ref{Theorem 2.3} to the 3-D case. On the other hand, via new cancellations \eqref{new1}, \eqref{new2}, and \eqref{new3} we can extend Theorem \ref{Theorem 2.4} to the 3-D case as follows. Under same notations of $b_{h}$ and $b_{v}$ from \eqref{est 77} we consider 
\begin{subequations}\label{est 78}
\begin{align}
&\partial_{t} u + (u\cdot\nabla) u + \nabla \pi + \nu \Lambda^{\frac{5}{2}} u=  (b\cdot\nabla) b,  \hspace{37mm} t > 0, \label{est 79} \\
&\partial_{t} b + (u\cdot\nabla) b + \eta_{h} \Lambda^{\frac{7}{2}} b_{h} + \eta_{v} \Lambda^{\frac{5}{2}} b_{v} = (b\cdot\nabla) u - \epsilon \nabla \times (j\times b),  \hspace{7mm} t > 0.  \label{est 80} 
\end{align}
\end{subequations} 
\begin{theorem}\label{Theorem 2.5}
Suppose that $(u_{0}, b_{0}) \in H^{m} (\mathbb{R}^{3}) \times H^{m}(\mathbb{R}^{3})$ where $m > \frac{5}{2}$ is an integer and $\nabla\cdot u_{0} = \nabla\cdot b_{0} = 0$.  Then there exists a unique solution 
 \begin{equation}
u \in L^{\infty} ((0,\infty); H^{m} (\mathbb{R}^{3})) \cap L^{2} ((0,\infty); H^{m+\frac{5}{4}}(\mathbb{R}^{3})), b \in L^{\infty} ((0,\infty);H^{m}(\mathbb{R}^{3})) 
\end{equation} 
such that 
\begin{equation}
b_{h} \in L^{2} ((0,\infty); H^{m+ \frac{7}{4}} (\mathbb{R}^{3})), b_{v} \in L^{2} ((0,\infty); H^{m+\frac{5}{4}}(\mathbb{R}^{3}))
\end{equation} 
to \eqref{est 78}, and $(u,b) \rvert_{t=0} = (u_{0}, b_{0})$. 
\end{theorem}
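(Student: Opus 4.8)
The plan is to follow the standard scheme for such hyper-dissipative systems: first establish local existence and uniqueness of a smooth solution, then derive global-in-time a priori bounds on $\lVert u \rVert_{H^m} + \lVert b \rVert_{H^m}$ which, combined with a continuation criterion, upgrade the local solution to a global one. Local well-posedness itself is routine: since the dissipative operators $\Lambda^{\frac{5}{2}}$, $\Lambda^{\frac{7}{2}}$, $\Lambda^{\frac{5}{2}}$ are Fourier multipliers generating analytic semigroups, one may construct a solution by a Galerkin approximation (or a Banach fixed point) in $C([0,T_{0}); H^{m}) \cap L^{2}(0,T_{0}; \cdot)$ exactly as for \eqref{est 67} in Theorem \ref{Theorem 2.4}; since all the difficulty lies in the a priori estimates, I describe those. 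First I would record the $L^{2}$ energy identity: testing \eqref{est 79} with $u$ and \eqref{est 80} with $b$, the advection terms vanish by $\nabla\cdot u = 0$, the Lorentz and stretching terms $(b\cdot\nabla) b$ and $(b\cdot\nabla) u$ cancel against each other, and the Hall term contributes nothing because $\int \nabla\times(j\times b)\cdot b \, dx = \int (j\times b)\cdot j \, dx = 0$ by \eqref{key}. This yields $\sup_{t}(\lVert u\rVert_{L^{2}}^{2} + \lVert b\rVert_{L^{2}}^{2}) + \int_{0}^{t}(\nu \lVert \Lambda^{\frac{5}{4}} u\rVert_{L^{2}}^{2} + \eta_{h}\lVert \Lambda^{\frac{7}{4}} b_{h}\rVert_{L^{2}}^{2} + \eta_{v}\lVert \Lambda^{\frac{5}{4}} b_{v}\rVert_{L^{2}}^{2})\, ds \lesssim \lVert u_{0}\rVert_{L^{2}}^{2} + \lVert b_{0}\rVert_{L^{2}}^{2}$, which supplies the time-integrable quantities needed to close the later Gronwall argument.

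The crux is the top-order estimate. I would apply $\Lambda^{m}$ to \eqref{est 79}--\eqref{est 80}, test with $\Lambda^{m} u$ and $\Lambda^{m} b$ respectively, and add; the pressure drops out by incompressibility. The velocity advection term is handled through a Kato--Ponce commutator and absorbed into $\nu \lVert \Lambda^{m+\frac{5}{4}} u\rVert_{L^{2}}^{2}$, precisely because $\Lambda^{\frac{5}{2}} = (-\Delta)^{\frac{5}{4}}$ is the critical diffusion for the $3$-D velocity field; the MHD coupling terms are treated by the usual cancellation after commutators, leaving remainders controllable by the velocity and magnetic dissipations via Gagliardo--Nirenberg interpolation. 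The decisive term is the Hall contribution $-\epsilon \int \Lambda^{m}[\nabla\times(j\times b)]\cdot\Lambda^{m} b \, dx$. Here I would first rewrite the Hall term as $\nabla\times((b\cdot\nabla) b)$ via \eqref{Hall rewritten}, and then invoke the cancellations \eqref{new1}, \eqref{new2}, and \eqref{new3}: these remove exactly the most singular pieces, in which all the top derivatives land on a single magnetic factor, so that the surviving trilinear terms distribute derivatives in a way that places the excess derivative on the horizontal magnetic components.

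Each surviving term should then be estimated by a product of norms and absorbed, through Young's inequality, into $\nu \lVert \Lambda^{m+\frac{5}{4}} u\rVert_{L^{2}}^{2} + \eta_{h}\lVert \Lambda^{m+\frac{7}{4}} b_{h}\rVert_{L^{2}}^{2} + \eta_{v}\lVert \Lambda^{m+\frac{5}{4}} b_{v}\rVert_{L^{2}}^{2}$, with a leftover prefactor that is integrable in time by the $L^{2}$ energy bound. This is exactly why the horizontal field is given the stronger diffusion $\Lambda^{\frac{7}{2}}$: the Hall term is one derivative more singular than the MHD nonlinearities and, once \eqref{new1}--\eqref{new3} have eliminated the leading contribution, the residual extra derivative falls on horizontal magnetic components, which the additional power in $\Lambda^{\frac{7}{2}} b_{h}$ is tuned to absorb, while $\Lambda^{\frac{5}{2}}$ suffices for $u$ and $b_{v}$. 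Summing all the estimates should produce a differential inequality of the form $\frac{d}{dt}(\lVert u\rVert_{H^{m}}^{2} + \lVert b\rVert_{H^{m}}^{2}) + (\text{dissipation}) \lesssim g(t)(\lVert u\rVert_{H^{m}}^{2} + \lVert b\rVert_{H^{m}}^{2})$ with $g \in L^{1}(0,\infty)$, whence Gronwall's inequality yields the global $H^{m}$ bound; integrating the dissipation then gives the stated $u\in L^{2}H^{m+\frac{5}{4}}$, $b_{h}\in L^{2}H^{m+\frac{7}{4}}$, $b_{v}\in L^{2}H^{m+\frac{5}{4}}$, and uniqueness follows from an energy estimate on the difference of two solutions in a lower norm.

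I expect the main obstacle to be precisely the bookkeeping of the post-cancellation Hall remainder: one must verify that \emph{after} applying \eqref{new1}--\eqref{new3}, every residual trilinear term has its highest derivatives genuinely supported on components that $\Lambda^{m+\frac{5}{4}} u$, $\Lambda^{m+\frac{7}{4}} b_{h}$, and $\Lambda^{m+\frac{5}{4}} b_{v}$ can control, with the interpolation exponents summing to exactly what the dissipation budget permits and leaving a time-integrable coefficient. The extension of the $2\frac{1}{2}$-D cancellations to three dimensions is what makes this accounting possible, and confirming that the tuned diffusion exponents $\tfrac{5}{2},\tfrac{7}{2},\tfrac{5}{2}$ close the borderline cases — rather than falling short by a fractional derivative — is the delicate point of the argument.
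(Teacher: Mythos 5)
Your proposal has the right outer shell (local existence, $L^{2}$ energy identity, Gronwall on a higher-order estimate), but it misplaces the key cancellations and, as a consequence, skips the step that actually makes the argument close. You propose to apply \eqref{new1}--\eqref{new3} inside the top-order estimate, i.e.\ in $-\epsilon\int \Lambda^{m}[\nabla\times(j\times b)]\cdot\Lambda^{m}b\,dx$. But those identities are not available there: they are derived from the very specific structure
\begin{equation*}
\int_{\mathbb{R}^{3}}\nabla\times(j\times b)\cdot\partial_{k}^{2}b\,dx=-\int_{\mathbb{R}^{3}}(j\times\partial_{k}b)\cdot\partial_{k}j\,dx,
\end{equation*}
where exactly one derivative $\partial_{k}$ sits on each of two factors, so that terms like $\int\partial_{k}b_{3}\,\partial_{1}b_{3}\,\partial_{k}\partial_{2}b_{3}\,dx$ can be paired into exact derivatives of $(\partial_{k}b_{3})^{2}$. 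After applying $D^{\alpha}$ with $\lvert\alpha\rvert=m$ and using \eqref{key} to kill only the top commutator piece $\int(D^{\alpha}j\times b)\cdot D^{\alpha}j\,dx=0$, the Leibniz remainder consists of terms $\int(D^{\beta}j\times D^{\alpha-\beta}b)\cdot D^{\alpha}j\,dx$ whose derivative distribution does not have this total-derivative structure, so the cancellations \eqref{new1}--\eqref{new3} cannot be ``invoked'' there. Relatedly, your claim that the leftover Gronwall coefficient is time-integrable ``by the $L^{2}$ energy bound'' fails for the Hall contribution: the energy identity \eqref{est 81} only yields $\Lambda^{\frac{7}{4}}b_{h},\Lambda^{\frac{5}{4}}b_{v}\in L_{t}^{2}L_{x}^{2}$, which is one half-derivative short of what the top-order Hall term demands.

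The paper closes this gap with an intermediate tier that your proposal omits entirely: an $\dot{H}^{1}$ estimate (testing with $(-\Delta u,-\Delta b)$, see \eqref{est 82}), which is precisely where the decomposition \eqref{est 87}--\eqref{est 95} and the cancellations \eqref{new1}, \eqref{new2}, \eqref{new3} live; the surviving trilinear terms are all bounded by $\int\lvert\nabla b\rvert\lvert\nabla b_{h}\rvert\lvert\nabla^{2}b_{h}\rvert\,dx$ and absorbed into $\eta_{h}\lVert\Lambda^{\frac{11}{4}}b_{h}\rVert_{L^{2}}^{2}$, yielding $\int_{0}^{T}\lVert\Lambda^{\frac{9}{4}}b\rVert_{L^{2}}^{2}\,dt<\infty$. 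Only then does the generic $H^{m}$ estimate (Proposition \ref{Proposition 3.4}) apply, and there the Hall term is handled crudely as $\RomanIII_{4}\lesssim\lVert\Lambda^{\frac{9}{4}}b\rVert_{L^{2}}\lVert b\rVert_{H^{m}}\lVert\Lambda^{\frac{5}{4}}b\rVert_{H^{m}}$ with no component-wise cancellation at all; the new cancellations enter the top-order bound only through the time-integrability of $\lVert\Lambda^{\frac{9}{4}}b\rVert_{L^{2}}^{2}$ established at the $\dot{H}^{1}$ level. To repair your argument you would need to insert this $\dot{H}^{1}$ step (or find a genuinely new cancellation valid under $D^{\alpha}$, which you have not exhibited).
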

We note that Theorem \ref{Theorem 2.5} improves \cite[Corollary 1.4]{Y15} which claimed the global well-posedness of the system \eqref{est 78} when $ \Lambda^{\frac{5}{2}} b_{v}$ in \eqref{est 80} is replaced by $\Lambda^{\frac{7}{2}}b_{v}$. 

\begin{remark}\label{Remark 2.1} 
To the best of our knowledge, Theorems \ref{Theorem 2.1}-\ref{Theorem 2.3} present first component reduction results for regularity criteria of the Hall-MHD system. Moreover, Theorems \ref{Theorem 2.4}-\ref{Theorem 2.5} present first global well-posedness results for the Hall-MHD system with magnetic diffusion that is weaker than the critical threshold for the MHD system added by one more derivative, i.e., $\eta_{h} \Lambda^{3} b_{h} + \eta_{v} \Lambda^{2} b_{v}$ in \eqref{est 64} and $\eta_{h} \Lambda^{\frac{7}{2}} b_{h} + \eta_{v} \Lambda^{\frac{5}{2}} b_{v}$ in \eqref{est 80} are both weaker by one derivative in the vertical component than the hyper-diffusion that we previously thought we need to attain global well-posedness (e.g., \cite[Theorem 2.3]{Y19a} and \cite[Corollary 1.4]{Y15}).  
\begin{enumerate}
\item Concerning Theorems \ref{Theorem 2.1}-\ref{Theorem 2.3}, a natural question may be whether or not one can obtain a regularity criteria in terms of partial derivatives of the pressure $\pi$ (see e.g., \cite{CT08, ZP10} on a criteria in terms of $\partial_{3} \pi$ for the 3-D NS equations, and \cite{CW10} for the 3-D MHD system). Another direction will be to extend such component reduction results to the 3-D Hall-MHD system (see e.g., \cite{Y16a} concerning the regularity criteria in terms of $u_{3}$ and $u_{4}$ for the 4-D NS equations).  
\item Concerning Theorems \ref{Theorem 2.4}-\ref{Theorem 2.5}, reducing the strength of the hyper-diffusion of magnetic field furthermore will certainly be of great interest and significance. 
\item Finally, it would be of interest to extend Theorems \ref{Theorem 2.1}-\ref{Theorem 2.4} to the Hall-MHD system with ion-slip effect (see e.g., \cite{HHM19}). 
\end{enumerate} 
\end{remark}

\section{Proofs of Theorems \ref{Theorem 2.1}-\ref{Theorem 2.4}}\label{Section 4}

\subsection{Proof of Theorem \ref{Theorem 2.1}} 
We take $L^{2}(\mathbb{R}^{2})$-inner products on \eqref{est 1}-\eqref{est 2} with $(u,b)$ and use the fact that 
\begin{equation}\label{est 11}
\int_{\mathbb{R}^{2}} \nabla \times (j\times b) \cdot b dx = \int_{\mathbb{R}^{2}} (j\times b) \cdot j dx \overset{\eqref{key}}{=} 0
\end{equation} 
to deduce the energy identity for all $t \in [0,T]$: 
\begin{equation}\label{est 12} 
\lVert u(t) \rVert_{L^{2}}^{2} + \lVert b(t) \rVert_{L^{2}}^{2} + 2 \int_{0}^{t}\nu \lVert \nabla u\rVert_{L^{2}}^{2} + \eta \lVert \nabla b \rVert_{L^{2}}^{2} ds = \lVert u_{0} \rVert_{L^{2}}^{2} + \lVert b_{0} \rVert_{L^{2}}^{2}.
\end{equation} 
Next, we take $L^{2}(\mathbb{R}^{2})$-inner products on \eqref{est 1}-\eqref{est 2} with $(-\Delta u, -\Delta b)$ to deduce 
\begin{equation}\label{est 13}
\frac{1}{2} \partial_{t} (\lVert \nabla u \rVert_{L^{2}}^{2} + \lVert \nabla b\rVert_{L^{2}}^{2}) + \nu \lVert \Delta u \rVert_{L^{2}}^{2} + \eta \lVert \Delta b \rVert_{L^{2}}^{2} = \sum_{i=1}^{5} \RomanI_{i} 
\end{equation} 
where 
\begin{subequations}\label{est 14}
\begin{align}
\RomanI_{1} \triangleq&  \int_{\mathbb{R}^{2}} (u\cdot\nabla) u \cdot \Delta u dx,\\
\RomanI_{2} \triangleq&  \int_{\mathbb{R}^{2}} (u\cdot\nabla) b \cdot \Delta b dx,\\
\RomanI_{3} \triangleq& -\int_{\mathbb{R}^{2}} (b\cdot\nabla) b \cdot \Delta u dx,\\
\RomanI_{4} \triangleq& -\int_{\mathbb{R}^{2}} (b\cdot\nabla) u \cdot \Delta b dx ,\\
\RomanI_{5} \triangleq& \epsilon \int_{\mathbb{R}^{2}} \nabla \times (j\times b) \cdot \Delta b dx. 
\end{align}
\end{subequations}
The estimates on $\RomanI_{1}-\RomanI_{4}$ are immediate as follows: by integration by parts, H$\ddot{\mathrm{o}}$lder's, Gagliardo-Nirenberg, and Young's inequalities, 
\begin{align}
 \RomanI_{1} + \RomanI_{2} + \RomanI_{3} + \RomanI_{4} 
\overset{\eqref{est 14}}{\lesssim}& \lVert \nabla u \rVert_{L^{2}} ( \lVert \nabla u \rVert_{L^{4}}^{2} + \lVert \nabla b \rVert_{L^{4}}^{2}) 
\lesssim \lVert \nabla u \rVert_{L^{2}} ( \lVert \nabla u \rVert_{L^{2}} \lVert \Delta u \rVert_{L^{2}} + \lVert \nabla b \rVert_{L^{2}} \lVert \Delta b \rVert_{L^{2}}) \nonumber\\
\leq& \frac{\nu}{2} \lVert \Delta u \rVert_{L^{2}}^{2} + \frac{\eta}{4} \lVert \Delta b \rVert_{L^{2}}^{2} + C \lVert \nabla u \rVert_{L^{2}}^{2} ( \lVert \nabla u\rVert_{L^{2}}^{2} + \lVert \nabla b \rVert_{L^{2}}^{2}). \label{est 23} 
\end{align} 
The heart of the matter is, of course, the Hall term. For clarity we first split
\begin{equation}\label{est 16}
\RomanI_{5} = \RomanI_{5,1} + \RomanI_{5,2} \text{ where } \RomanI_{5,1} \triangleq \epsilon \int_{\mathbb{R}^{2}} \nabla \times (j\times b) \cdot \partial_{1}^{2} b dx \text{ and } \RomanI_{5,2} \triangleq \epsilon \int_{\mathbb{R}^{2}} \nabla \times (j\times b) \cdot \partial_{2}^{2} b dx.
\end{equation} 
Now we carefully compute 
\begin{align}
\RomanI_{5,1} \overset{\eqref{est 16}}{=} \epsilon \int_{\mathbb{R}^{2}} (j\times b) \cdot \partial_{1}^{2} j dx=& -\epsilon  \int_{\mathbb{R}^{2}} (\partial_{1} j \times b) \cdot \partial_{1} j + (j \times \partial_{1} b) \cdot \partial_{1} j dx \nonumber\\
\overset{\eqref{key}}{=}& -\epsilon \int_{\mathbb{R}^{2}} (j\times \partial_{1} b) \cdot \partial_{1} j dx  = \sum_{i=1}^{6} \RomanI_{5,1,i}\label{est 21}
\end{align}
where 
\begin{subequations}\label{est 17}
\begin{align}
& \RomanI_{5,1,1} \triangleq - \epsilon \int_{\mathbb{R}^{2}} j_{2}\partial_{1} b_{3} \partial_{1} j_{1} dx, \\
& \RomanI_{5,1,2} \triangleq \epsilon \int_{\mathbb{R}^{2}} j_{3} \partial_{1} b_{2} \partial_{1} j_{1} dx, \\
& \RomanI_{5,1,3} \triangleq \epsilon \int_{\mathbb{R}^{2}} j_{1} \partial_{1} b_{3} \partial_{1} j_{2} dx, \\
& \RomanI_{5,1,4} \triangleq -\epsilon \int_{\mathbb{R}^{2}} j_{3}\partial_{1} b_{1} \partial_{1} j_{2} dx, \\
& \RomanI_{5,1,5} \triangleq -\epsilon \int_{\mathbb{R}^{2}} j_{1}\partial_{1} b_{2} \partial_{1} j_{3} dx, \\
& \RomanI_{5,1,6} \triangleq \epsilon \int_{\mathbb{R}^{2}} j_{2}\partial_{1}b_{1}\partial_{1} j_{3} dx.
\end{align}
\end{subequations} 
We make the key observation that $\RomanI_{5,1,1}$ and $\RomanI_{5,1,3}$ together cancel out as follows:
\begin{align}
\RomanI_{5,1,1} + \RomanI_{5,1,3} \overset{\eqref{est 17}}{=}& -\epsilon  \int_{\mathbb{R}^{2}} j_{2} \partial_{1} b_{3} \partial_{1} j_{1} - j_{1} \partial_{1} b_{3} \partial_{1} j_{2} dx \nonumber\\
\overset{\eqref{j}}{=}& -\epsilon \int_{\mathbb{R}^{2}} - \partial_{1} b_{3} \partial_{1}b_{3}\partial_{1}\partial_{2} b_{3} + \partial_{2} b_{3} \partial_{1} b_{3} \partial_{1}\partial_{1} b_{3}dx \nonumber \\
=& -\epsilon  \int_{\mathbb{R}^{2}} -\partial_{1} b_{3} \frac{1}{2}\partial_{2} (\partial_{1} b_{3})^{2} + \partial_{2} b_{3} \frac{1}{2}\partial_{1} (\partial_{1}b_{3})^{2} dx \nonumber  \\
=& -\epsilon \int_{\mathbb{R}^{2}} \frac{1}{2} \partial_{1}\partial_{2} b_{3} (\partial_{1}b_{3})^{2} - \frac{1}{2} \partial_{1}\partial_{2} b_{3} (\partial_{1} b_{3})^{2} dx = 0. \label{est 19}
\end{align}
Analogously, we compute  
\begin{align}
\RomanI_{5,2} \overset{\eqref{est 16}}{=}\epsilon  \int_{\mathbb{R}^{2}} (j\times b) \cdot \partial_{2}^{2} j dx=& -\epsilon  \int_{\mathbb{R}^{2}} (\partial_{2} j \times b) \cdot \partial_{2} j + (j \times \partial_{2} b) \cdot \partial_{2} j dx \nonumber\\
\overset{\eqref{key}}{=}& -\epsilon \int_{\mathbb{R}^{2}} (j\times \partial_{2} b) \cdot \partial_{2} j dx  = \sum_{i=1}^{6} \RomanI_{5,2,i}\label{est 22} 
\end{align}
where 
\begin{subequations}\label{est 18}
\begin{align}
& \RomanI_{5,2,1} \triangleq - \epsilon \int_{\mathbb{R}^{2}} j_{2}\partial_{2} b_{3} \partial_{2} j_{1} dx, \\
& \RomanI_{5,2,2} \triangleq \epsilon \int_{\mathbb{R}^{2}} j_{3} \partial_{2} b_{2} \partial_{2} j_{1} dx, \\
& \RomanI_{5,2,3} \triangleq \epsilon \int_{\mathbb{R}^{2}} j_{1} \partial_{2} b_{3} \partial_{2} j_{2} dx, \\
& \RomanI_{5,2,4} \triangleq -\epsilon \int_{\mathbb{R}^{2}} j_{3}\partial_{2} b_{1} \partial_{2} j_{2} dx, \\
& \RomanI_{5,2,5} \triangleq -\epsilon \int_{\mathbb{R}^{2}} j_{1}\partial_{2} b_{2} \partial_{2} j_{3} dx, \\
& \RomanI_{5,2,6} \triangleq \epsilon \int_{\mathbb{R}^{2}} j_{2}\partial_{2}b_{1}\partial_{2} j_{3} dx.
\end{align}
\end{subequations} 
Again, we make the key observation that $\RomanI_{5,2,1}$ and $\RomanI_{5,2,3}$ together cancel out as follows:  
\begin{align} 
\RomanI_{5,2,1} + \RomanI_{5,2,3} \overset{\eqref{est 18}}{=}& -\epsilon  \int_{\mathbb{R}^{2}} j_{2} \partial_{2} b_{3} \partial_{2} j_{1} - j_{1} \partial_{2} b_{3} \partial_{2} j_{2} dx \nonumber\\
\overset{\eqref{j}}{=}& -\epsilon \int_{\mathbb{R}^{2}} - \partial_{1} b_{3} \partial_{2}b_{3}\partial_{2}\partial_{2} b_{3} + \partial_{2} b_{3} \partial_{2} b_{3} \partial_{2}\partial_{1} b_{3}dx  \nonumber \\
=& - \epsilon \int_{\mathbb{R}^{2}} -\partial_{1} b_{3} \frac{1}{2}\partial_{2} (\partial_{2} b_{3})^{2} + \partial_{2} b_{3} \frac{1}{2}\partial_{1} (\partial_{2}b_{3})^{2} dx \nonumber  \\
=& -\epsilon \int_{\mathbb{R}^{2}} \frac{1}{2} \partial_{1}\partial_{2} b_{3} (\partial_{2}b_{3})^{2} - \frac{1}{2} \partial_{1}\partial_{2} b_{3} (\partial_{2} b_{3})^{2} dx = 0.\label{est 20}
\end{align}
Thanks to \eqref{est 19} and \eqref{est 20}, we only have to estimate $\RomanI_{5,1,i}$ and $\RomanI_{5,2,i}$ for $i \in \{2,4,5,6\}$ and it turns out that we can immediately obtain from \eqref{est 17} and \eqref{est 18} 
\begin{subequations}\label{est 25}
\begin{align}
& \RomanI_{5,1,2} = \epsilon \int_{\mathbb{R}^{2}} j_{3} \partial_{1} b_{2} \partial_{1} j_{1} dx = -\epsilon \int_{\mathbb{R}^{2}} \partial_{1} (j_{3} \partial_{1} b_{2}) j_{1} dx \lesssim \int_{\mathbb{R}^{2}} \lvert j_{1} \rvert \lvert \nabla b \rvert \lvert \nabla^{2} b \rvert dx, \\
& \RomanI_{5,1,4} = - \epsilon \int_{\mathbb{R}^{2}} j_{3} \partial_{1} b_{1} \partial_{1} j_{2} dx = \epsilon \int_{\mathbb{R}^{2}} \partial_{1} (j_{3} \partial_{1} b_{1}) j_{2} dx \lesssim \int_{\mathbb{R}^{2}} \lvert j_{2} \rvert \lvert \nabla b \rvert \lvert \nabla^{2} b \rvert dx, \\
& \RomanI_{5,1,5} = -\epsilon \int_{\mathbb{R}^{2}} j_{1} \partial_{1} b_{2} \partial_{1} j_{3} dx \lesssim \int_{\mathbb{R}^{2}} \lvert j_{1} \rvert \lvert \nabla b \rvert \lvert \nabla^{2} b \rvert dx, \\
& \RomanI_{5,1,6} = \epsilon \int_{\mathbb{R}^{2}} j_{2} \partial_{1} b_{1} \partial_{1} j_{3} dx \lesssim \int_{\mathbb{R}^{2}} \lvert j_{2} \rvert \lvert \nabla b \rvert \lvert \nabla^{2} b \rvert dx, \\
& \RomanI_{5,2,2} = \epsilon \int_{\mathbb{R}^{2}} j_{3} \partial_{2} b_{2}\partial_{2} j_{1} dx = - \epsilon \int_{\mathbb{R}^{2}} \partial_{2} (j_{3} \partial_{2} b_{2}) j_{1} dx \lesssim \int_{\mathbb{R}^{2}} \lvert j_{1} \rvert \lvert \nabla b \rvert \lvert \nabla^{2} b \rvert dx, \\
& \RomanI_{5,2,4} = -\epsilon  \int_{\mathbb{R}^{2}} j_{3} \partial_{2} b_{1} \partial_{2} j_{2} dx =\epsilon  \int_{\mathbb{R}^{2}} \partial_{2} (j_{3} \partial_{2} b_{1}) j_{2} dx \lesssim \int_{\mathbb{R}^{2}} \lvert j_{2} \rvert \lvert \nabla b \rvert \lvert \nabla^{2} b \rvert dx, \\
& \RomanI_{5,2,5} = -\epsilon \int_{\mathbb{R}^{2}} j_{1} \partial_{2} b_{2}\partial_{2} j_{3} dx \lesssim \int_{\mathbb{R}^{2}} \lvert j_{1} \rvert \lvert \nabla b \rvert \lvert \nabla^{2} b \rvert dx, \\
& \RomanI_{5,2,6} = \epsilon \int_{\mathbb{R}^{2}} j_{2}\partial_{2} b_{1} \partial_{2} j_{3} dx \lesssim \int_{\mathbb{R}^{2}} \lvert j_{2} \rvert \lvert \nabla b \rvert \lvert \nabla^{2} b \rvert dx. 
\end{align}
\end{subequations} 
In fact, the cancellations \eqref{est 19} and \eqref{est 20} are not necessary for this proof of Theorem \ref{Theorem 2.1} because we can bound 
\begin{align*}
\RomanI_{5,1,1} + \RomanI_{5,2,1} \lesssim \int_{\mathbb{R}^{2}} \lvert j_{2} \rvert \lvert \nabla b \rvert \lvert \nabla^{2} b \rvert dx \hspace{1mm} \text{ and } \hspace{1mm} \RomanI_{5,1,3} + \RomanI_{5,2,3} \lesssim \int_{\mathbb{R}^{2}} \lvert j_{1} \rvert \lvert \nabla b \rvert \lvert \nabla^{2} b \rvert dx.
\end{align*}
Nonetheless, they simplified our computations and become absolutely necessary in the proof of Theorems \ref{Theorem 2.2}, \ref{Theorem 2.3} (1), and \ref{Theorem 2.4} (see Remarks \ref{Remark 3.1}, \ref{Remark 3.2}, and \ref{Remark 3.3}). 

Now we are ready to conclude the proof of Theorem \ref{Theorem 2.1}; for $p \in (2,\infty]$, interpreting $\frac{2p}{p-2} = 2$ when $p = \infty$, we compute via H$\ddot{\mathrm{o}}$lder's, Gagliardo-Nirenberg, and Young's inequalities, 
\begin{align}
\RomanI_{5} \overset{\eqref{est 16} \eqref{est 21} \eqref{est 22}}{=} \sum_{i=1}^{6} \RomanI_{5,1,i} + \RomanI_{5,2,i} \overset{\eqref{est 25}}{\lesssim}& \int_{\mathbb{R}^{2}} \lvert \nabla b \rvert \lvert \nabla^{2} b \rvert \sum_{k=1}^{2} \lvert j_{k} \rvert dx \lesssim \lVert \nabla b \rVert_{L^{\frac{2p}{p-2}}} \lVert \Delta b \rVert_{L^{2}} \sum_{k=1}^{2} \lVert j_{k} \rVert_{L^{p}} \nonumber \\ 
 \leq& \frac{\eta}{4} \lVert \Delta b \rVert_{L^{2}}^{2} + C \lVert \nabla b \rVert_{L^{2}}^{2} \sum_{k=1}^{2} \lVert j_{k} \rVert_{L^{p}}^{\frac{2p}{p-2}}.\label{est 24} 
\end{align}
Applying \eqref{est 24} and \eqref{est 23} to \eqref{est 13} and Gronwall's inequality and relying on that $\nabla u, \nabla b \in L_{T}^{2}L_{x}^{2}$ from \eqref{est 12} complete the proof of Theorem \ref{Theorem 2.1}. 

\subsection{Proof of Theorem \ref{Theorem 2.2}}
The proof of Theorem \ref{Theorem 2.2} almost immediately follows from that of Theorem \ref{Theorem 2.1}. We continue to rely on all the computations \eqref{est 12} - \eqref{est 20} so that the proof of Theorem \ref{Theorem 2.1} shows that we only have to bound $\RomanI_{5,1,i}$ and $\RomanI_{5,2,i}$ for $i \in \{2,4,5,6\}$ by a constant multiple of $\int_{\mathbb{R}^{2}} \lvert j_{3} \rvert \lvert \nabla b \rvert \lvert \nabla^{2} b \rvert dx$ this time, in contrast to \eqref{est 25}. We immediately accomplish this task as follows: from \eqref{est 17} and \eqref{est 18} 
\begin{subequations}\label{est 26}
\begin{align}
& \RomanI_{5,1,2} = \epsilon \int_{\mathbb{R}^{2}} j_{3} \partial_{1} b_{2} \partial_{1} j_{1} dx \lesssim \int_{\mathbb{R}^{2}} \lvert j_{3} \rvert \lvert \nabla b \rvert \lvert \nabla^{2} b \rvert dx,   \\
& \RomanI_{5,1,4} = - \epsilon \int_{\mathbb{R}^{2}} j_{3} \partial_{1} b_{1} \partial_{1} j_{2} dx \lesssim \int_{\mathbb{R}^{2}} \lvert j_{3} \rvert \lvert \nabla b \rvert \lvert \nabla^{2} b \rvert dx, \\
& \RomanI_{5,1,5} = -\epsilon \int_{\mathbb{R}^{2}} j_{1} \partial_{1} b_{2} \partial_{1} j_{3} dx = \epsilon \int_{\mathbb{R}^{2}} \partial_{1} (j_{1} \partial_{1} b_{2}) j_{3} dx \lesssim \int_{\mathbb{R}^{2}} \lvert j_{3} \rvert \lvert \nabla b \rvert \lvert \nabla^{2} b \rvert dx,  \\
& \RomanI_{5,1,6} = \epsilon \int_{\mathbb{R}^{2}} j_{2} \partial_{1} b_{1} \partial_{1} j_{3} dx  = - \epsilon  \int_{\mathbb{R}^{2}} \partial_{1} (j_{2} \partial_{1} b_{1}) j_{3} dx\lesssim \int_{\mathbb{R}^{2}} \lvert j_{3} \rvert \lvert \nabla b \rvert \lvert \nabla^{2} b \rvert dx,  \\
& \RomanI_{5,2,2} = \epsilon \int_{\mathbb{R}^{2}} j_{3} \partial_{2} b_{2}\partial_{2} j_{1} dx \lesssim \int_{\mathbb{R}^{2}} \lvert j_{3} \rvert \lvert \nabla b \rvert \lvert \nabla^{2} b \rvert dx,  \\
& \RomanI_{5,2,4} = - \epsilon \int_{\mathbb{R}^{2}} j_{3} \partial_{2} b_{1} \partial_{2} j_{2} dx \lesssim \int_{\mathbb{R}^{2}} \lvert j_{3} \rvert \lvert \nabla b \rvert \lvert \nabla^{2} b \rvert dx,  \\
& \RomanI_{5,2,5} = -\epsilon \int_{\mathbb{R}^{2}} j_{1} \partial_{2} b_{2}\partial_{2} j_{3} dx = \epsilon \int_{\mathbb{R}^{2}} \partial_{2} (j_{1} \partial_{2} b_{2}) j_{3} dx \lesssim \int_{\mathbb{R}^{2}} \lvert j_{3} \rvert \lvert \nabla b \rvert \lvert \nabla^{2} b \rvert dx,\\
& \RomanI_{5,2,6} = \epsilon \int_{\mathbb{R}^{2}} j_{2}\partial_{2} b_{1}  \partial_{2} j_{3} dx = - \epsilon \int_{\mathbb{R}^{2}} \partial_{2} (j_{2} \partial_{2} b_{1}) j_{3} dx \lesssim  \int_{\mathbb{R}^{2}} \lvert j_{3} \rvert \lvert \nabla b \rvert \lvert \nabla^{2} b \rvert dx. 
\end{align}
\end{subequations} 
\begin{remark}\label{Remark 3.1} 
Here, we stress that the cancellations \eqref{est 19} and \eqref{est 20} are crucial because $j_{3}$ is absent in any of $\RomanI_{5,1,1}$ and $\RomanI_{5,1,3}$ in \eqref{est 17} and $\RomanI_{5,2,1}$ and $\RomanI_{5,2,3}$ in \eqref{est 18}. 
\end{remark}
Following the same proof of Theorem \ref{Theorem 2.1}  immediately completes the proof of Theorem \ref{Theorem 2.2}. 

\subsection{Proof of Theorem \ref{Theorem 2.3}} 
First, we note that the energy identity \eqref{est 12} with $\nu = \eta = 1$ remains valid providing us the regularity of 
\begin{equation}\label{est 30}
u, b \in L_{T}^{\infty} L_{x}^{2} \cap L_{T}^{2} \dot{H}_{x}^{1}.  
\end{equation}
The proof will consist of a few steps; for clarity we divide it into three propositions. For the first proposition, let us write down the equation of motion of vorticity $\omega = \nabla \times u$:
\begin{equation}\label{vorticity} 
\partial_{t} \omega + (u\cdot\nabla) \omega = \Delta \omega + (\omega\cdot \nabla) u + \nabla \times ((b\cdot\nabla) b).
\end{equation} 

\begin{proposition}\label{Proposition 3.1}
Under the hypothesis of Theorem \ref{Theorem 2.3}, let $\omega$ be a smooth solution to \eqref{vorticity} over $[0,T]$. Then 
\begin{equation}\label{est 46}
\omega  \in L_{T}^{\infty} L_{x}^{2} \cap L_{T}^{2} \dot{H}_{x}^{1}.
\end{equation} 
\end{proposition}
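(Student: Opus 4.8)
The plan is to run an $L^{2}(\mathbb{R}^{2})$ energy estimate directly on the vorticity equation \eqref{vorticity}, drawing time integrability from the energy identity \eqref{est 12} (which, with $\nu=\eta=1$, yields $\nabla u,\nabla b\in L_{T}^{2}L_{x}^{2}$). Taking the $L^{2}$ inner product of \eqref{vorticity} with $\omega$ and using $\nabla\cdot u=0$ to annihilate the transport term, I obtain
\begin{equation*}
\frac{1}{2}\partial_{t}\lVert\omega\rVert_{L^{2}}^{2}+\lVert\nabla\omega\rVert_{L^{2}}^{2}=\int_{\mathbb{R}^{2}}(\omega\cdot\nabla)u\cdot\omega\,dx+\int_{\mathbb{R}^{2}}\nabla\times((b\cdot\nabla)b)\cdot\omega\,dx.
\end{equation*}
The first integral is the vortex-stretching term; the second is the magnetic term, and I expect the latter to be the crux.

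For the vortex-stretching term I would exploit the $2\frac{1}{2}$-D structure $\omega=(\partial_{2}u_{3},-\partial_{1}u_{3},\partial_{1}u_{2}-\partial_{2}u_{1})$ together with $\omega\cdot\nabla=\omega_{1}\partial_{1}+\omega_{2}\partial_{2}$. The third-component contribution vanishes identically because $\omega_{1}\partial_{1}u_{3}+\omega_{2}\partial_{2}u_{3}=\partial_{2}u_{3}\,\partial_{1}u_{3}-\partial_{1}u_{3}\,\partial_{2}u_{3}=0$, so only the horizontal components survive and
\begin{equation*}
\Big\lvert\int_{\mathbb{R}^{2}}(\omega\cdot\nabla)u\cdot\omega\,dx\Big\rvert\lesssim\int_{\mathbb{R}^{2}}\lvert\nabla u_{3}\rvert^{2}\,\lvert\nabla_{h}(u_{1},u_{2})\rvert\,dx.
\end{equation*}
H\"older's inequality and the two-dimensional Gagliardo--Nirenberg inequality bound this by $\lVert\nabla u_{3}\rVert_{L^{2}}\lVert\nabla^{2}u_{3}\rVert_{L^{2}}\lVert\nabla(u_{1},u_{2})\rVert_{L^{2}}$, and since $\lVert\nabla^{2}u_{3}\rVert_{L^{2}}\lesssim\lVert\nabla\omega\rVert_{L^{2}}$, Young's inequality splits it into $\tfrac14\lVert\nabla\omega\rVert_{L^{2}}^{2}$ plus $C\lVert\omega\rVert_{L^{2}}^{2}\cdot\lVert\omega\rVert_{L^{2}}^{2}$, whose coefficient $\lVert\omega\rVert_{L^{2}}^{2}$ lies in $L_{T}^{1}$ by \eqref{est 12}; this term is therefore Gr\"onwall-admissible.

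The hard part is the magnetic term. The identity $\nabla\times((b\cdot\nabla)b)=\nabla\times(j\times b)$, which follows from $(b\cdot\nabla)b=\tfrac12\nabla\lvert b\rvert^{2}-b\times j$ and $\nabla\cdot b=\nabla\cdot j=0$, shows that this is precisely the Hall term, but now tested against $\omega$ rather than against $\Delta b$ as in the magnetic $\dot{H}^{1}$ estimate \eqref{est 13}; this lowers the derivative count on $b$ by one and is the structural gain that lets a hypothesis on $u$ enter. Expanding via \eqref{j}, the integrand splits as $\nabla_{h}\big((b\cdot\nabla)b_{3}\big)\cdot\nabla_{h}u_{3}+(\partial_{1}G_{2}-\partial_{2}G_{1})\,\omega_{3}$ with $G\triangleq(b\cdot\nabla)b$. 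The horizontal piece is $2\frac{1}{2}$-D MHD-like, and I would close it as in the globally well-posed $2\frac{1}{2}$-D MHD system through the standard cancellation obtained by coupling the estimate with the scalar current $j_{3}$. The vertical piece carries $u_{3}$ explicitly; integrating by parts to load all derivatives onto $u_{3}$ (leaving $b$ at energy level) recasts it as $\int_{\mathbb{R}^{2}}b_{3}\,(b\cdot\nabla)\Delta_{h}u_{3}\,dx$, which I would control by H\"older's, Gagliardo--Nirenberg, and interpolation inequalities using the assumed bound \eqref{est 52}--\eqref{est 53} (resp. \eqref{est 61}--\eqref{est 62} for both $k\in\{1,2\}$) on the high derivatives of the velocity component. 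The cancellations \eqref{est 19} and \eqref{est 20} guarantee that, after all reductions, the only surviving magnetic contributions are exactly these pieces carrying the assumed components. Collecting the estimates, absorbing $\tfrac14\lVert\nabla\omega\rVert_{L^{2}}^{2}$ into the left-hand side, and applying Gr\"onwall's inequality with the $L_{T}^{1}$ coefficients supplied by \eqref{est 12} would yield \eqref{est 46}. The genuine obstacle is this magnetic/Hall term, and the whole purpose of passing to the vorticity formulation is to trade its pairing with $\Delta b$ for a pairing with $\omega\sim\nabla u$, making the velocity-component hypothesis usable.
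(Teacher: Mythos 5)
Your reduction of the vortex--stretching term is fine, but the way you propose to handle the magnetic term $\int_{\mathbb{R}^{2}}\nabla\times((b\cdot\nabla)b)\cdot\omega\,dx$ has a genuine gap, and it is precisely the term the paper's proof is designed to avoid estimating at all. Note first that the paper's Proposition \ref{Proposition 3.1} is unconditional: it never invokes \eqref{est 52}--\eqref{est 53}. The proof defines $z^{1}=b+\omega$ as in \eqref{est 28} and observes that, after rewriting the Hall term via \eqref{Hall rewritten}, the term $-\epsilon\nabla\times((b\cdot\nabla)b)$ in the magnetic equation \eqref{est 2} cancels \emph{at the level of the PDE} against the term $+\nabla\times((b\cdot\nabla)b)$ in the vorticity equation \eqref{vorticity} (here $\nu=\eta=\epsilon=1$ is used so the diffusions also combine), leaving the clean equation \eqref{est 29} with only the stretching term $(z^{1}\cdot\nabla)u$; Gr\"onwall with $\nabla u\in L_{T}^{2}L_{x}^{2}$ from \eqref{est 12} then closes the estimate. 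You instead try to bound the magnetic term directly, and this does not close with the information available at this stage, namely only $u,b\in L_{T}^{\infty}L_{x}^{2}\cap L_{T}^{2}\dot{H}_{x}^{1}$ from \eqref{est 30}. Concretely, writing $G=(b\cdot\nabla)b$, the piece $\int_{\mathbb{R}^{2}}(\partial_{1}G_{2}-\partial_{2}G_{1})\,\omega_{3}\,dx=\int_{\mathbb{R}^{2}}\big[(b\cdot\nabla)b_{1}\,\partial_{2}\omega_{3}-(b\cdot\nabla)b_{2}\,\partial_{1}\omega_{3}\big]dx$ is bounded only by $\lVert b\rVert_{L^{4}}\lVert\nabla b\rVert_{L^{4}}\lVert\nabla\omega_{3}\rVert_{L^{2}}$, and $\lVert\nabla b\rVert_{L^{4}}$ requires $\lVert\Delta b\rVert_{L^{2}}^{1/2}$, i.e.\ exactly the $\dot{H}^{2}$ control of $b$ that is not yet known (obtaining it is the whole difficulty with the Hall term). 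Moreover this piece contains neither $u_{3}$ nor $b_{3}$, so the hypotheses \eqref{est 52}--\eqref{est 53} cannot help, and ``coupling with the scalar current $j_{3}$'' makes matters worse: the equation for $j$ carries $\nabla\times\nabla\times(j\times b)$, which is one derivative more singular. The cancellations \eqref{est 19} and \eqref{est 20} do not rescue you either; they live in the pairing of the Hall term with $\Delta b$, organized through the components of $\partial_{k}j$, and do not transfer to the pairing with $\omega$.

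There is also a bookkeeping error: the integral you ``recast as $\int_{\mathbb{R}^{2}}b_{3}(b\cdot\nabla)\Delta_{h}u_{3}\,dx$'' is the horizontal-components piece $\int\nabla_{h}G_{3}\cdot\nabla_{h}u_{3}\,dx=-\int(b\cdot\nabla)b_{3}\,\Delta_{h}u_{3}\,dx$, not the $\omega_{3}$ piece, and invoking \eqref{est 52}--\eqref{est 53} to control it already at this stage (four derivatives of $u_{3}$ against two present here) is both unnecessary and a sign the argument is off track. The missing idea is the Chae--Wolf substitution $z^{1}=b+\omega$; once you add the $b$-equation to the vorticity equation, the entire magnetic nonlinearity disappears and the proposition follows from a one-line energy estimate plus $\omega=z^{1}-b$ and \eqref{est 30}.
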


\begin{proof}[Proof of Proposition \ref{Proposition 3.1}]
We recall the vector calculus identity of 
\begin{align*}
(\nabla \times \Theta) \times \Theta = - \nabla \left( \frac{ \lvert \Theta \rvert^{2}}{2} \right) + (\Theta \cdot \nabla) \Theta \hspace{3mm} \forall \hspace{1mm} \Theta \in \mathbb{R}^{3} 
\end{align*}
so that we may rewrite the Hall term as 
\begin{equation}\label{Hall rewritten}
\nabla \times ( j\times b) = \nabla \times [- \nabla ( \frac{ \lvert b \rvert^{2}}{2} ) + (b\cdot\nabla) b] = \nabla \times ((b\cdot\nabla) b). 
\end{equation} 
Then we define 
\begin{equation}\label{est 28}
z^{1} \triangleq b + \omega; 
\end{equation} 
we note that this trick was used by Chae and Wolf in \cite[equation (2.7)]{CW15}. Then we see from \eqref{est 0}, \eqref{vorticity}, and \eqref{Hall rewritten} that the equation of motion of $z^{1}$ is given by 
\begin{equation}\label{est 29}
\partial_{t} z^{1} + (u\cdot\nabla) z^{1} = (z^{1} \cdot \nabla) u + \Delta z^{1}. 
\end{equation} 
We take $L^{2}(\mathbb{R}^{2})$-inner products with $z^{1}$ in \eqref{est 29} and compute by relying on H$\ddot{\mathrm{o}}$lder's, Gagliardo-Nirenberg, and Young's inequalities 
\begin{align}
\frac{1}{2} \partial_{t} \lVert z^{1} \rVert_{L^{2}}^{2} + \lVert \nabla z^{1} \rVert_{L^{2}}^{2} =& \int_{\mathbb{R}^{2}} (z^{1} \cdot \nabla) u \cdot z^{1} dx \label{est 31}\\
\leq& \lVert z^{1} \rVert_{L^{4}}^{2} \lVert \nabla u \rVert_{L^{2}} 
\lesssim \lVert z^{1} \rVert_{L^{2}} \lVert \nabla z^{1} \rVert_{L^{2}} \lVert \nabla u \rVert_{L^{2}} \leq \frac{1}{2} \lVert \nabla z^{1} \rVert_{L^{2}}^{2}+ C \lVert z^{1} \rVert_{L^{2}}^{2} \lVert \nabla u \rVert_{L^{2}}^{2}. \nonumber 
\end{align} 
Subtracting $\frac{1}{2} \lVert \nabla z^{1} \rVert_{L^{2}}^{2}$ from both sides of \eqref{est 31}, applying Gronwall's inequality, and relying on the fact that $u \in L_{T}^{2} \dot{H}_{x}^{1}$ from \eqref{est 30} give us 
\begin{equation}\label{est 35}
z^{1} \in L_{T}^{\infty} L_{x}^{2} \cap L_{T}^{2} \dot{H}_{x}^{1},
\end{equation} 
and consequently due to \eqref{est 30}
\begin{equation}
\omega \overset{\eqref{est 28}}{=} z^{1} - b \in L_{T}^{\infty} L_{x}^{2} \cap L_{T}^{2} \dot{H}_{x}^{1},
\end{equation} 
which implies \eqref{est 46}.
\end{proof}
 
Next, we define 
\begin{equation}\label{est 32}
z^{2} \triangleq \nabla \times z^{1} \overset{\eqref{est 28}}{=} j + \nabla \times \omega;
\end{equation}  
we see that the governing equation of $z^{2}$ is 
\begin{equation}\label{est 33} 
\partial_{t} z^{2} + (u\cdot\nabla) z^{2} = - (\omega \cdot \nabla) z^{1} + \Delta z^{2} + (z^{1} \cdot \nabla) \omega + (z^{2} \cdot \nabla) u 
+ 2 
\begin{pmatrix}
0\\
0 \\
\partial_{1} z^{1} \cdot \partial_{2} u - \partial_{2} z^{1} \cdot \partial_{1} u
\end{pmatrix}. 
\end{equation} 

\begin{proposition}\label{Proposition 3.2}
Under the hypothesis of Theorem \ref{Theorem 2.3}, let $z^{2}$ be a smooth solution to \eqref{est 33} over $[0,T]$. Then 
\begin{equation}\label{est 36} 
z^{2} \in L_{T}^{\infty} L_{x}^{2} \cap L_{T}^{2} \dot{H}_{x}^{1}. 
\end{equation} 
\end{proposition}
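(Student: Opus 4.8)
The plan is to run a direct $L^{2}(\mathbb{R}^{2})$-energy estimate on $z^{2}$ using its evolution equation \eqref{est 33}. Taking the $L^{2}$-inner product of \eqref{est 33} with $z^{2}$, the advection term $\int_{\mathbb{R}^{2}}(u\cdot\nabla)z^{2}\cdot z^{2}\,dx$ vanishes by $\nabla\cdot u=0$ and the dissipation produces the coercive term $\|\nabla z^{2}\|_{L^{2}}^{2}$, so that
\begin{equation*}
\tfrac{1}{2}\partial_{t}\|z^{2}\|_{L^{2}}^{2}+\|\nabla z^{2}\|_{L^{2}}^{2}=N_{1}+N_{2}+N_{3}+N_{4},
\end{equation*}
where $N_{1}\triangleq-\int_{\mathbb{R}^{2}}(\omega\cdot\nabla)z^{1}\cdot z^{2}\,dx$, $N_{2}\triangleq\int_{\mathbb{R}^{2}}(z^{1}\cdot\nabla)\omega\cdot z^{2}\,dx$, $N_{3}\triangleq\int_{\mathbb{R}^{2}}(z^{2}\cdot\nabla)u\cdot z^{2}\,dx$, and $N_{4}$ collects the last, purely third-component forcing in \eqref{est 33}, which pairs against $z^{2}_{3}$. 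Two structural facts will drive every estimate. First, since $z^{1}=b+\omega$ and $u$ are both divergence free, the div-curl (Calder\'on--Zygmund) inequalities give $\|\nabla z^{1}\|_{L^{q}}\lesssim\|\nabla\times z^{1}\|_{L^{q}}=\|z^{2}\|_{L^{q}}$ and $\|\nabla u\|_{L^{q}}\lesssim\|\omega\|_{L^{q}}$ for $q\in(1,\infty)$, so $z^{2}$ is comparable to a full gradient of $z^{1}$. Second, from \eqref{est 30} and \eqref{est 46} I already control $\omega,z^{1},u,b$ in $L_{T}^{\infty}L_{x}^{2}\cap L_{T}^{2}\dot{H}_{x}^{1}$.

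For the self-interaction $N_{3}$, H\"older's inequality, the two-dimensional Gagliardo--Nirenberg inequality $\|z^{2}\|_{L^{4}}^{2}\lesssim\|z^{2}\|_{L^{2}}\|\nabla z^{2}\|_{L^{2}}$, and Young's inequality give $N_{3}\le\tfrac{1}{4}\|\nabla z^{2}\|_{L^{2}}^{2}+C\|\nabla u\|_{L^{2}}^{2}\|z^{2}\|_{L^{2}}^{2}$, and $\|\nabla u\|_{L^{2}}^{2}\lesssim\|\omega\|_{L^{2}}^{2}\in L_{T}^{\infty}$ supplies an admissible Gr\"onwall coefficient. The pair $N_{1},N_{2}$ is the delicate part, because as written each carries $\nabla\omega$, which is only in $L_{T}^{2}L_{x}^{2}$. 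The remedy is to integrate by parts, using $\nabla\cdot\omega=\nabla\cdot z^{1}=0$, to move the derivative off $\nabla\omega$ and onto $z^{2}$: namely $N_{1}=\int_{\mathbb{R}^{2}}(\omega\cdot\nabla)z^{2}\cdot z^{1}\,dx$ and $N_{2}=-\int_{\mathbb{R}^{2}}(z^{1}\cdot\nabla)z^{2}\cdot\omega\,dx$. Each is then bounded by $\|\omega\|_{L^{4}}\|z^{1}\|_{L^{4}}\|\nabla z^{2}\|_{L^{2}}$, and after Gagliardo--Nirenberg and Young's inequalities yields $\tfrac{1}{8}\|\nabla z^{2}\|_{L^{2}}^{2}+C\|\omega\|_{L^{2}}\|z^{1}\|_{L^{2}}\|\nabla\omega\|_{L^{2}}\|\nabla z^{1}\|_{L^{2}}$; the last product is an integrable forcing in time by the Cauchy--Schwarz inequality together with \eqref{est 30} and \eqref{est 46}, since $\|\nabla\omega\|_{L^{2}},\|\nabla z^{1}\|_{L^{2}}\in L_{T}^{2}$.

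For $N_{4}$ I would bound the integrand pointwise by $|\nabla z^{1}|\,|\nabla u|\,|z^{2}|$, apply H\"older with exponents $(4,4,2)$, convert $\|\nabla z^{1}\|_{L^{4}}\lesssim\|z^{2}\|_{L^{4}}$ and $\|\nabla u\|_{L^{4}}\lesssim\|\omega\|_{L^{4}}$ through the div-curl bounds, and use Gagliardo--Nirenberg to arrive at $C\|z^{2}\|_{L^{2}}^{3/2}\|\nabla z^{2}\|_{L^{2}}^{1/2}\|\omega\|_{L^{2}}^{1/2}\|\nabla\omega\|_{L^{2}}^{1/2}$. Young's inequality then produces $\tfrac{1}{4}\|\nabla z^{2}\|_{L^{2}}^{2}+C\|z^{2}\|_{L^{2}}^{2}\|\omega\|_{L^{2}}^{2/3}\|\nabla\omega\|_{L^{2}}^{2/3}$, whose coefficient $\|\omega\|_{L^{2}}^{2/3}\|\nabla\omega\|_{L^{2}}^{2/3}$ lies in $L_{T}^{1}$ because $\|\nabla\omega\|_{L^{2}}^{2/3}\in L_{T}^{3}$. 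Collecting the four estimates and absorbing the $\|\nabla z^{2}\|_{L^{2}}^{2}$ contributions to the left leaves a differential inequality of the form $\partial_{t}\|z^{2}\|_{L^{2}}^{2}+\|\nabla z^{2}\|_{L^{2}}^{2}\le f(t)+g(t)\|z^{2}\|_{L^{2}}^{2}$ with $f,g\in L^{1}(0,T)$; Gr\"onwall's inequality and a subsequent integration in time then yield \eqref{est 36}.

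I expect the main obstacle to be exactly the $N_{1},N_{2}$ pair: naive estimation exposes $\nabla\omega$, a second derivative of $u$ that is only square-integrable in time, so one must exploit both divergence-free conditions to transfer the derivative onto $z^{2}$ and absorb it into the dissipation, while the comparability $\|\nabla z^{1}\|_{L^{q}}\lesssim\|z^{2}\|_{L^{q}}$ is what keeps every cubic term at the level of first derivatives of $z^{1}$. I also note that, unlike the forthcoming $\dot{H}^{2}$-estimate where the Hall term forces the component hypothesis of Theorem \ref{Theorem 2.3}, this proposition should close unconditionally from \eqref{est 30} and \eqref{est 46} alone; the component bounds on $u$ should enter only at the later stage that upgrades control of the combinations $z^{1},z^{2}$ to control of $u$ and $b$ separately.
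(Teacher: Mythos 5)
Your proposal is correct and follows essentially the same route as the paper: an $L^{2}$-energy estimate on $z^{2}$, integration by parts on the $(\omega\cdot\nabla)z^{1}$ and $(z^{1}\cdot\nabla)\omega$ terms to shift the derivative onto $z^{2}$ and absorb it into the dissipation, followed by Gagliardo--Nirenberg, Young, and Gr\"onwall using \eqref{est 30}, \eqref{est 46}, and \eqref{est 35}. The only (immaterial) deviation is in the last forcing term, where the paper uses H\"older with exponents $(2,4,4)$ placing $\nabla z^{1}$ in $L^{2}$ and $z^{2}$ in $L^{4}$, whereas you use $(4,4,2)$ together with the Calder\'on--Zygmund bound $\lVert \nabla z^{1}\rVert_{L^{4}}\lesssim\lVert z^{2}\rVert_{L^{4}}$; both close the estimate.
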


\begin{proof}[Proof of Proposition \ref{Proposition 3.2}]
We take $L^{2}(\mathbb{R}^{2})$-inner products on \eqref{est 33} with $z^{2}$ and estimate via H$\ddot{\mathrm{o}}$lder's, Gagliardo-Nirenberg, and Young's inequalities 
\begin{align}
& \frac{1}{2} \partial_{t} \lVert z^{2} \rVert_{L^{2}}^{2} + \lVert \nabla z^{2} \rVert_{L^{2}}^{2} \nonumber\\
\lesssim& \lVert \omega \rVert_{L^{4}} \lVert \nabla z^{2} \rVert_{L^{2}} \lVert z^{1} \rVert_{L^{4}} + \lVert z^{1} \rVert_{L^{4}} \lVert \nabla z^{2} \rVert_{L^{2}} \lVert \omega \rVert_{L^{4}} + \lVert z^{2} \rVert_{L^{4}}^{2} \lVert \nabla u \rVert_{L^{2}} + \lVert \nabla z^{1} \rVert_{L^{2}} \lVert \nabla u \rVert_{L^{4}} \lVert z^{2} \rVert_{L^{4}} \nonumber \\
\lesssim& \lVert \omega \rVert_{L^{2}}^{\frac{1}{2}} \lVert \nabla \omega \rVert_{L^{2}}^{\frac{1}{2}} \lVert \nabla z^{2} \rVert_{L^{2}} \lVert z^{1} \rVert_{L^{2}}^{\frac{1}{2}} \lVert \nabla z^{1} \rVert_{L^{2}}^{\frac{1}{2}} + \lVert z^{1} \rVert_{L^{2}}^{\frac{1}{2}} \lVert \nabla z^{1} \rVert_{L^{2}}^{\frac{1}{2}} \lVert \nabla z^{2} \rVert_{L^{2}} \lVert \omega \rVert_{L^{2}}^{\frac{1}{2}} \lVert \nabla \omega \rVert_{L^{2}}^{\frac{1}{2}} \nonumber\\
&+ \lVert z^{2} \rVert_{L^{2}} \lVert \nabla z^{2} \rVert_{L^{2}} \lVert \nabla u \rVert_{L^{2}} + \lVert \nabla z^{1} \rVert_{L^{2}} \lVert \omega \rVert_{L^{2}}^{\frac{1}{2}} \lVert \nabla \omega \rVert_{L^{2}}^{\frac{1}{2}} \lVert z^{2} \rVert_{L^{2}}^{\frac{1}{2}} \lVert \nabla z^{2} \rVert_{L^{2}}^{\frac{1}{2}} \nonumber\\
\leq& \frac{1}{2} \lVert \nabla z^{2} \rVert_{L^{2}}^{2}+  C ( \lVert \nabla \omega \rVert_{L^{2}}^{2} + \lVert \nabla z^{1} \rVert_{L^{2}}^{2} + 1) (\lVert \omega \rVert_{L^{2}}^{2} + \lVert z^{1} \rVert_{L^{2}}^{2} + 1) ( \lVert z^{2} \rVert_{L^{2}}^{2} + 1). \label{est 34}
\end{align}
After subtracting $\frac{1}{2} \lVert \nabla z^{2} \rVert_{L^{2}}^{2}$ from both sides of \eqref{est 34}, relying on \eqref{est 46} and \eqref{est 35} and applying Gronwall's inequality allow us to deduce \eqref{est 36}. 
\end{proof}

Now, first we prove Theorem \ref{Theorem 2.3} part (1) with a criteria in terms of $\Delta u_{3}$ in \eqref{est 52}. Considering the definition of $z^{2}$ in \eqref{est 32} and \eqref{est 36}, we realize that the $H^{1}(\mathbb{R}^{2})$-bound of $(u,b)$ is attained once we obtain an $L^{2}(\mathbb{R}^{2})$-bound of $\nabla \times \omega$ that has the governing equation of 
\begin{equation}\label{est 37} 
\partial_{t} \nabla \times \omega + \nabla \times ((u\cdot\nabla) \omega) = \Delta \nabla \times \omega + \nabla \times ((\omega \cdot \nabla) u) + \nabla \times \nabla \times ((b\cdot\nabla) b). 
\end{equation} 
In fact, we consider only the third component of \eqref{est 37}, namely
\begin{equation}\label{est 38}
\partial_{t} (\nabla \times \omega)_{3} + ( \nabla \times ((u\cdot\nabla) \omega))_{3} = \Delta (\nabla \times \omega)_{3} + (\nabla \times ((\omega \cdot \nabla) u))_{3} + ( \nabla \times \nabla \times ((b\cdot\nabla) b))_{3}. 
\end{equation} 

\begin{proposition}\label{Proposition 3.3}
Under the hypothesis of Theorem \ref{Theorem 2.3} (1), let $\nabla \times \omega $ be a smooth solution to \eqref{est 37} over $[0,T]$. Then 
\begin{equation}\label{est 47} 
(\nabla \times \omega)_{3} \in L_{T}^{\infty} L_{x}^{2} \cap L_{T}^{2} \dot{H}_{x}^{1}. 
\end{equation} 
\end{proposition}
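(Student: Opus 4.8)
The plan is to collapse the third component of \eqref{est 37}--\eqref{est 38} into a scalar parabolic equation for a single unknown and then run an $L^{2}(\mathbb{R}^{2})$-energy estimate that is closed by the hypothesis \eqref{est 52} (or \eqref{est 53}). The starting observation is the pointwise identity $(\nabla \times \omega)_{3} = -\Delta u_{3}$, valid because $\nabla\cdot u = 0$ in the $2\frac{1}{2}$-D setting. Writing $W \triangleq (\nabla \times \omega)_{3} = -\Delta u_{3}$ and applying $-\Delta$ to the third component of \eqref{est 1} (whose pressure contribution $\partial_{3}\pi$ vanishes), together with the identity $(\nabla\times\nabla\times((b\cdot\nabla)b))_{3} = -\Delta[(b\cdot\nabla)b_{3}]$ already recorded inside \eqref{est 38}, yields
\[
\partial_{t} W - \Delta W = \Delta[(u\cdot\nabla)u_{3}] - \Delta[(b\cdot\nabla)b_{3}].
\]
Taking the $L^{2}(\mathbb{R}^{2})$-inner product with $W$ gives
\[
\tfrac{1}{2}\partial_{t}\lVert W\rVert_{L^{2}}^{2} + \lVert \nabla W\rVert_{L^{2}}^{2} = \int_{\mathbb{R}^{2}}\Delta[(u\cdot\nabla)u_{3}]\,W\,dx - \int_{\mathbb{R}^{2}}\Delta[(b\cdot\nabla)b_{3}]\,W\,dx \triangleq F_{1} + F_{2}.
\]
The goal is to dominate $F_{1}+F_{2}$ by a quantity of the form $g(t)(\lVert W\rVert_{L^{2}}^{2}+1) + \tfrac{1}{2}\lVert\nabla W\rVert_{L^{2}}^{2}$ with $g\in L^{1}(0,T)$, after which Gronwall's inequality and \eqref{est 30}, Propositions \ref{Proposition 3.1}--\ref{Proposition 3.2} deliver \eqref{est 47}.

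For the velocity term $F_{1}$ I would integrate by parts twice to move both Laplacians onto $W$, so that $F_{1} = \int_{\mathbb{R}^{2}}(u\cdot\nabla)u_{3}\,\Delta W\,dx = -\int_{\mathbb{R}^{2}}(u\cdot\nabla)u_{3}\,\Delta(\Delta u_{3})\,dx$; this is exactly where the criterion quantity $\Delta(\Delta u_{3})$ enters. With $p'$ the conjugate exponent of $p$, H$\ddot{\mathrm{o}}$lder's inequality with exponents $(2p',2p',p)$ followed by the Gagliardo-Nirenberg interpolation of $\lVert u\rVert_{L^{2p'}}$ (between the energy norm and $\lVert\nabla u\rVert_{L^{2}}$) and of $\lVert\nabla u_{3}\rVert_{L^{2p'}}$ (between $\lVert\nabla u_{3}\rVert_{L^{2}}$ and $\lVert\Delta u_{3}\rVert_{L^{2}} = \lVert W\rVert_{L^{2}}$) reduces $F_{1}$ to a constant multiple of $\lVert\Delta(\Delta u_{3})\rVert_{L^{p}}$ times powers of the controlled norms and of $\lVert W\rVert_{L^{2}}$; the relation $\frac{2}{p}+\frac{2}{r}\leq 1$ is precisely what renders the temporal weight $\lVert\Delta(\Delta u_{3})\rVert_{L^{p}}^{r}$ integrable, so Young's inequality produces a Gronwall-admissible bound. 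Under the alternative hypothesis \eqref{est 53}, the same two integrations by parts are followed by an $L^{2}$--$BMO$--$L^{2}$ estimate (Hardy--$BMO$ duality) and Gronwall against $\int_{0}^{T}\lVert\Delta(\Delta u_{3})\rVert_{BMO}^{2}\,dt$.

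The main obstacle is the magnetic/Hall term $F_{2}$, for a naive integration by parts there generates $\lVert\nabla b_{3}\rVert_{L^{2p'}}$, whose Gagliardo-Nirenberg interpolation calls for $\nabla^{2}b_{3}$; through $\nabla b_{3}\overset{\eqref{j}}{=}(-j_{2},j_{1})$ and $j = z^{2}-\nabla\times\omega$ this is tied to $\nabla^{2}\omega_{3}$, a third derivative of the horizontal velocity that is \emph{not} controlled by \eqref{est 30} or Propositions \ref{Proposition 3.1}--\ref{Proposition 3.2}, and is beyond the reach of the criterion, which concerns only $u_{3}$. This is the point at which the cancellations \eqref{est 19} and \eqref{est 20} become indispensable (cf.\ Remark \ref{Remark 3.2}). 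I would first rewrite $(b\cdot\nabla)b_{3} = (j\times b)_{3}$ and substitute $j = z^{2}-\nabla\times\omega$ to obtain the splitting $(b\cdot\nabla)b_{3} = (z^{2}\times b)_{3} - (b\cdot\nabla)\omega_{3}$, whence $F_{2} = -\int_{\mathbb{R}^{2}}\Delta[(z^{2}\times b)_{3}]\,W\,dx + \int_{\mathbb{R}^{2}}\Delta[(b\cdot\nabla)\omega_{3}]\,W\,dx$. The first integral involves only the controlled field $z^{2}$: two integrations by parts turn it into $\int_{\mathbb{R}^{2}}(z^{2}\times b)_{3}\,\Delta(\Delta u_{3})\,dx$, bounded by $\lVert z^{2}\rVert_{L^{2p'}}\lVert b\rVert_{L^{2p'}}\lVert\Delta(\Delta u_{3})\rVert_{L^{p}}$ and closed exactly as $F_{1}$ using $z^{2},b\in L_{T}^{\infty}L_{x}^{2}\cap L_{T}^{2}\dot{H}_{x}^{1}$. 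The residual integral $\int_{\mathbb{R}^{2}}\Delta[(b\cdot\nabla)\omega_{3}]\,W\,dx$ is where the genuine difficulty concentrates: its top-order contributions are precisely the ones demanding the uncontrolled $\nabla^{2}\omega_{3}$, and they are expected to be annihilated by the same computation that produced \eqref{est 19}--\eqref{est 20} (the analogues of $\RomanI_{5,1,1}+\RomanI_{5,1,3}$ and $\RomanI_{5,2,1}+\RomanI_{5,2,3}$), leaving only lower-order pieces. Those survivors reduce, via $j_{3} = (z^{2})_{3}-W$ and $\Delta b_{1} = -\partial_{2}j_{3}$, $\Delta b_{2} = \partial_{1}j_{3}$, to products of the controlled $z^{2}$-norms, the energy and enstrophy bounds, and factors of $\nabla W$ that are absorbed into $\lVert\nabla W\rVert_{L^{2}}^{2}$.

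Combining the estimates of $F_{1}$ and $F_{2}$ yields a differential inequality $\partial_{t}\lVert W\rVert_{L^{2}}^{2} + \lVert\nabla W\rVert_{L^{2}}^{2} \lesssim g(t)(\lVert W\rVert_{L^{2}}^{2}+1)$ with $g\in L^{1}(0,T)$ assembled from the criterion and the already-established integrable norms, and Gronwall's inequality then gives \eqref{est 47}. I expect the delicate step to be the bookkeeping for the residual term $\int_{\mathbb{R}^{2}}\Delta[(b\cdot\nabla)\omega_{3}]\,W\,dx$: one must verify that every contribution requiring $\nabla^{2}\omega_{3}$ is exactly one of those removed by \eqref{est 19}--\eqref{est 20}, and that no such contribution persists in a form that the $u_{3}$-criterion cannot absorb.
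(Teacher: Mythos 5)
Your overall skeleton (an $L^{2}$-energy estimate for $W=(\nabla\times\omega)_{3}=-\Delta u_{3}$, with the criterion quantity $\Delta(\Delta u_{3})$ absorbing the highest derivatives, closed by Gronwall against \eqref{est 30} and Propositions \ref{Proposition 3.1}--\ref{Proposition 3.2}) is the same as the paper's, and your treatment of $F_{1}$ is workable (the paper handles the velocity nonlinearities even more cheaply, bounding $\RomanII_{1},\RomanII_{2}$ by the dissipation without invoking the criterion, as in \eqref{est 44}--\eqref{est 45}). The genuine gap is in $F_{2}$. The ``obstacle'' you identify there is an artifact of a suboptimal H\"older split: after the two integrations by parts you do not need $\nabla b_{3}$ in $L^{2p'}$ at all. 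Taking the exponents $\left(\tfrac{2p}{p-2},\,2,\,p\right)$ gives
\begin{equation*}
\Bigl\lvert \int_{\mathbb{R}^{2}} (b\cdot\nabla)b_{3}\,\Delta(\Delta u_{3})\,dx \Bigr\rvert \leq \lVert b \rVert_{L^{\frac{2p}{p-2}}}\,\lVert \nabla b_{3} \rVert_{L^{2}}\,\lVert \Delta(\Delta u_{3}) \rVert_{L^{p}},
\end{equation*}
and $\lVert b \rVert_{L^{2p/(p-2)}}\lesssim \lVert b\rVert_{L^{2}}^{(p-2)/p}\lVert\nabla b\rVert_{L^{2}}^{2/p}$ is already controlled by the energy identity; this is exactly the paper's \eqref{est 43}--\eqref{est 46 again}, and it closes by H\"older in time since $r\geq \tfrac{2p}{p-2}$ (the $BMO$ case is the analogous $\mathcal{H}^{1}$--$BMO$ duality estimate \eqref{est 50}). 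No second derivative of $b_{3}$, hence no $\nabla^{2}\omega_{3}$, ever appears.

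Because you did not see this, you substituted $j=z^{2}-\nabla\times\omega$ and were left with $\int_{\mathbb{R}^{2}}\Delta[(b\cdot\nabla)\omega_{3}]\,W\,dx$, whose top-order terms you assert are ``expected to be annihilated'' by analogues of \eqref{est 19}--\eqref{est 20}. That assertion is the gap: it is never carried out, and there is no reason to expect it to hold --- \eqref{est 19} and \eqref{est 20} are cancellations among three factors all built from $b_{3}$, whereas your residual integral mixes $b$, $\omega_{3}$ (horizontal velocity) and $W$ ($u_{3}$), which are independent fields. You have also misread where the cancellations enter Theorem \ref{Theorem 2.3}~(1): per Remark \ref{Remark 3.2} they are used only \emph{after} Proposition \ref{Proposition 3.3}, when $(\nabla\times\omega)_{3}\in L_{T}^{\infty}L_{x}^{2}\cap L_{T}^{2}\dot{H}_{x}^{1}$ is converted into $j_{3}\in L_{T}^{4}L_{x}^{4}$ and Theorem \ref{Theorem 2.2} is invoked; they play no role inside the proof of Proposition \ref{Proposition 3.3} itself. (Incidentally, even your decomposition can be rescued without any cancellation: moving both Laplacians onto $W$ turns the residual term into $-\int_{\mathbb{R}^{2}}(b\cdot\nabla)\omega_{3}\,\Delta(\Delta u_{3})\,dx$, which is bounded by $\lVert b\rVert_{L^{2p/(p-2)}}\lVert\nabla\omega_{3}\rVert_{L^{2}}\lVert\Delta(\Delta u_{3})\rVert_{L^{p}}$ with $\nabla\omega_{3}\in L_{T}^{2}L_{x}^{2}$ from Proposition \ref{Proposition 3.1} --- but the detour is unnecessary.)
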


\begin{proof}[Proof of Proposition \ref{Proposition 3.3}]
We take $L^{2}(\mathbb{R}^{2})$-inner products on \eqref{est 38}with $(\nabla\times \omega)_{3}$ to deduce 
\begin{equation}\label{est 39}
\frac{1}{2} \partial_{t} \lVert (\nabla \times \omega)_{3} \rVert_{L^{2}}^{2} + \lVert \nabla (\nabla \times \omega)_{3} \rVert_{L^{2}}^{2} = \sum_{i=1}^{3} \RomanII_{i} 
\end{equation} 
where 
\begin{subequations}\label{est 40}
\begin{align}
& \RomanII_{1} \triangleq -\int_{\mathbb{R}^{2}} (\nabla \times ((u\cdot\nabla) \omega))_{3} (\nabla \times \omega)_{3} dx, \\
& \RomanII_{2} \triangleq \int_{\mathbb{R}^{2}} (\nabla \times ((\omega \cdot \nabla) u))_{3} (\nabla \times \omega)_{3} dx, \\
& \RomanII_{3} \triangleq \int_{\mathbb{R}^{2}} (\nabla \times \nabla \times ((b\cdot\nabla) b))_{3} (\nabla \times \omega)_{3} dx. 
\end{align}
\end{subequations} 
For convenience in further computations, we note simple identities: any $f, g$ that are $\mathbb{R}^{3}$-valued and do not depend on $x_{3}$ satisfy 
\begin{equation}\label{est 41}
\int_{\mathbb{R}^{2}} (\nabla \times f)_{1} g_{1} + (\nabla \times f)_{2} g_{2}dx = \int_{\mathbb{R}^{2}} f_{3} (\nabla \times g)_{3} dx,
\end{equation} 
and additionally if $g$ is divergence-free, then they satisfy 
\begin{equation}\label{est 42}
\int_{\mathbb{R}^{2}} (\nabla \times f)_{3} (\nabla \times g)_{3} dx = - \int_{\mathbb{R}^{2}} 
\begin{pmatrix}
f_{1}& f_{2} & 0
\end{pmatrix}^{T} 
\cdot 
\begin{pmatrix}
\Delta g_{1} & \Delta g_{2} & 0 
\end{pmatrix}^{T} dx. 
\end{equation} 
Applying \eqref{est 41}-\eqref{est 42} to \eqref{est 40} leads us to 
\begin{subequations}\label{est 43}
\begin{align}
& \RomanII_{1} = \int_{\mathbb{R}^{2}}  
\begin{pmatrix}
(u\cdot\nabla) \omega_{1} & (u\cdot\nabla) \omega_{2} &0
\end{pmatrix}^{T} 
\cdot 
\begin{pmatrix}
\Delta \omega_{1} & \Delta \omega_{2} & 0 
\end{pmatrix}^{T} 
dx, \\
&\RomanII_{2} = -\int_{\mathbb{R}^{2}}
\begin{pmatrix}
(\omega\cdot \nabla) u_{1} & (\omega\cdot\nabla) u_{2} & 0 
\end{pmatrix}^{T}
\cdot 
\begin{pmatrix}
\Delta \omega_{1} & \Delta \omega_{2} & 0 
\end{pmatrix}^{T}
dx, \\
& \RomanII_{3} = -\int_{\mathbb{R}^{2}} (b\cdot\nabla) b_{3} \Delta (\nabla \times \omega)_{3} dx.
\end{align}
\end{subequations} 
We estimate via H$\ddot{\mathrm{o}}$lder's, Gagliardo-Nirenberg, and Young's inequalities and the fact that $\omega_{1} = \partial_{2} u_{3}, \omega_{2} = - \partial_{1} u_{3}$, 
\begin{align}
\RomanII_{1} \lesssim \sum_{k=1}^{2} \lVert u \rVert_{L^{\infty}} \lVert \nabla \omega_{k} \rVert_{L^{2}} \lVert \Delta \omega_{k} \rVert_{L^{2}} 
\lesssim& \lVert u \rVert_{L^{2}}^{\frac{1}{2}} \lVert \Delta u \rVert_{L^{2}}^{\frac{1}{2}} \lVert \Delta u_{3} \rVert_{L^{2}} \lVert \Delta \nabla u_{3} \rVert_{L^{2}} \nonumber \\
\leq& \frac{1}{4} \lVert \nabla (\nabla \times \omega)_{3} \rVert_{L^{2}}^{2} + C \lVert u \rVert_{L^{2}} \lVert \Delta u \rVert_{L^{2}} \lVert (\nabla \times \omega)_{3} \rVert_{L^{2}}^{2}, \label{est 44}
\end{align} 
and  
\begin{align}
\RomanII_{2} \lesssim& \sum_{k=1}^{2} \lVert \omega \rVert_{L^{4}}  \lVert \nabla u_{k} \rVert_{L^{4}}  \lVert \Delta \omega_{k} \rVert_{L^{2}} \nonumber\\
\lesssim& \lVert \nabla u \rVert_{L^{2}} \lVert \Delta u \rVert_{L^{2}} \lVert \Delta \nabla u_{3} \rVert_{L^{2}} \leq \frac{1}{4} \lVert \nabla (\nabla \times \omega)_{3} \rVert_{L^{2}}^{2} + C  \lVert \nabla u \rVert_{L^{2}}^{2} \lVert \Delta u \rVert_{L^{2}}^{2}. \label{est 45} 
\end{align}
For $\RomanII_{3}$, we first consider $p \in (2,\infty)$ and estimate via H$\ddot{\mathrm{o}}$lder's and Gagliardo-Nirenberg inequalities 
\begin{equation}\label{est 46 again}
\RomanII_{3} \leq \lVert b \rVert_{L^{\frac{2p}{p-2}}} \lVert \nabla b_{3} \rVert_{L^{2}} \lVert \Delta (\nabla \times \omega)_{3} \rVert_{L^{p}} \lesssim\lVert b \rVert_{L^{2}}^{\frac{p-2}{p}} \lVert \nabla b_{3} \rVert_{L^{2}}^{\frac{2+p}{p}} \lVert \Delta (\nabla \times \omega)_{3} \rVert_{L^{p}}. 
\end{equation}
Applying \eqref{est 44}, \eqref{est 45}, and \eqref{est 46 again} to \eqref{est 39} gives us 
\begin{align}
&\partial_{t} \lVert (\nabla \times \omega)_{3} \rVert_{L^{2}}^{2}+  \lVert \nabla (\nabla \times\omega)_{3} \rVert_{L^{2}}^{2} \nonumber \\
\lesssim& \lVert (\nabla \times \omega)_{3} \rVert_{L^{2}}^{2}\lVert u \rVert_{L^{2}} \lVert \Delta u \rVert_{L^{2}} +  \lVert \nabla u \rVert_{L^{2}}^{2} \lVert \Delta u \rVert_{L^{2}}^{2}  + \lVert b \rVert_{L^{2}}^{\frac{p-2}{p}} \lVert \nabla b_{3} \rVert_{L^{2}}^{\frac{2+p}{p}} \lVert \Delta (\nabla \times \omega)_{3} \rVert_{L^{p}}.\label{est 48}
\end{align} 
Integrating \eqref{est 48} over time $[0,t]$ and relying on H$\ddot{\mathrm{o}}$lder's inequality give us 
\begin{align}
& \lVert (\nabla \times \omega)_{3} (t) \rVert_{L^{2}}^{2} + \int_{0}^{t} \lVert \nabla (\nabla \times \omega)_{3} \rVert_{L^{2}}^{2} ds \label{est 49}\\
\overset{\eqref{est 30} \eqref{est 46}}{\lesssim}&  \lVert (\nabla \times \omega)_{3} (0) \rVert_{L^{2}}^{2} +  \int_{0}^{t} \lVert (\nabla \times \omega)_{3}  \rVert_{L^{2}}^{2}  \lVert \Delta u \rVert_{L^{2}} ds + 1 +  \left(\int_{0}^{t} \lVert \Delta (\Delta u_{3} ) \rVert_{L^{p}}^{\frac{2p}{p-2}} ds \right)^{\frac{p-2}{2p}}. \nonumber
\end{align}
To obtain a criteria in terms of $BMO(\mathbb{R}^{2})$-norm in \eqref{est 53}, we denote by $\mathcal{H}^{1}(\mathbb{R}^{2})$ the Hardy space and rely on the duality of $\mathcal{H}^{1}(\mathbb{R}^{2})$ and $BMO(\mathbb{R}^{2})$, as well as the div-curl lemma (e.g. \cite[Theorem 12.1]{L02}) to estimate 
\begin{equation}\label{est 50} 
\RomanII_{3} \overset{\eqref{est 43}}{\lesssim}  \lVert (b\cdot\nabla) b \rVert_{\mathcal{H}^{1}} \lVert \Delta (\nabla \times \omega)_{3} \rVert_{BMO} \lesssim \lVert b \rVert_{L^{2}} \lVert \nabla b \rVert_{L^{2}} \lVert \Delta (\Delta u_{3}) \rVert_{BMO}.  
\end{equation} 
We apply \eqref{est 44}, \eqref{est 45}, and \eqref{est 50} to \eqref{est 39}, integrate the resulting equation over time $[0,t]$ and rely on H$\ddot{\mathrm{o}}$lder's inequality to attain similarly to \eqref{est 49}
\begin{align}
&\lVert (\nabla \times \omega)_{3} (t) \rVert_{L^{2}}^{2} + \int_{0}^{t} \lVert \nabla (\nabla \times \omega)_{3} (s)\rVert_{L^{2}}^{2} ds \label{est 51}\\
\overset{\eqref{est 30} \eqref{est 46}}{\lesssim}&   \lVert (\nabla \times \omega)_{3} (0) \rVert_{L^{2}}^{2} +  \int_{0}^{t} \lVert (\nabla \times \omega)_{3}  \rVert_{L^{2}}^{2} \lVert \Delta u \rVert_{L^{2}} ds + 1 +  \left(\int_{0}^{t} \lVert \Delta (\Delta u_{3} ) \rVert_{BMO}^{2} ds \right)^{\frac{1}{2}}. \nonumber
\end{align}
Due to the hypothesis of \eqref{est 52} and \eqref{est 53}, we see that applying Gronwall's inequality on \eqref{est 49} and \eqref{est 51} implies the desired result of \eqref{est 47} and hence the proof of Proposition \ref{Proposition 3.3} is complete. 
\end{proof}

We are ready to conclude the proof of the first part of Theorem \ref{Theorem 2.3}. By Propositions \ref{Proposition 3.2}-\ref{Proposition 3.3} we realize that 
\begin{equation}\label{est 54}
j_{3} \overset{\eqref{est 32}}{=} z_{3}^{2} - (\nabla \times \omega)_{3} \in L_{T}^{\infty} L_{x}^{2} \cap L_{T}^{2} \dot{H}_{x}^{1}. 
\end{equation} 
We can apply Gagliardo-Nirenberg inequality to deduce from  \eqref{est 54} 
\begin{equation}
\int_{0}^{T} \lVert j_{3} \rVert_{L^{4}}^{4} ds \lesssim \sup_{s\in [0,T]} \lVert j_{3} (s) \rVert_{L^{2}}^{2} \int_{0}^{T} \lVert \nabla j_{3}  \rVert_{L^{2}}^{2} ds \lesssim 1. 
\end{equation} 
Therefore, $j_{3} \in L_{T}^{4}L_{x}^{4}$, allowing us to apply Theorem \ref{Theorem 2.2} to deduce \eqref{est 27} as desired. 
\begin{remark}\label{Remark 3.2}
As we emphasized in Remark \ref{Remark 3.1}, the cancellations \eqref{est 19} and \eqref{est 20} were crucial in the proof of Theorem \ref{Theorem 2.2}. Because the proof of Theorem \ref{Theorem 2.3} (1) relied on Theorem \ref{Theorem 2.2}, the proof of Theorem \ref{Theorem 2.3} (1) in turn also relied crucially on the cancellations \eqref{est 19} and \eqref{est 20}.
\end{remark}

Next, the proof of the second part of Theorem \ref{Theorem 2.3} follows same line of reasonings for the first part. In short, we continue to rely on Propositions \ref{Proposition 3.1}-\ref{Proposition 3.2}, obtain analogous estimates to Proposition \ref{Proposition 3.3} in components $k = 1, 2$ instead of $k = 3$, and thereafter rely on Theorem \ref{Theorem 2.1} instead of Theorem \ref{Theorem 2.2}. Due to similarity, we leave this in the Appendix for completeness. 

\subsection{Proof of Theorem \ref{Theorem 2.4}} 
Local existence of the unique solution to \eqref{est 67} starting from the given $(u_{0}, b_{0})$ can be proven following previous works such as \cite[Theorem 2.2]{CDL14}. Moreover, the proof of the blow-up criterion from \eqref{est 4} can be easily seen to go through for \eqref{est 67} because \eqref{est 67} has more diffusive strength than the classical Hall-MHD system. Therefore, it suffices to obtain $H^{1}(\mathbb{R}^{2})$-bound again because it will imply $\int_{0}^{T} \lVert \Delta b \rVert_{L^{2}}^{2} dt < \infty$ from the magnetic diffusion which can bound $\int_{0}^{T} \lVert j \rVert_{BMO}^{2} dt < \infty$ (e.g., \cite[Theorem 1.48]{BCD11}). Hence, we are able to follow the same line of reasoning in the proof of Theorem \ref{Theorem 2.1}. In fact, as we will emphasize in Remark \ref{Remark 3.3}, the key to the proof of Theorem \ref{Theorem 2.4} is once again the cancellations in \eqref{est 19} and \eqref{est 20}. 

First, relying on \eqref{est 11} we obtain the following energy identity:
\begin{equation}\label{est 69} 
\lVert u(t) \rVert_{L^{2}}^{2} + \lVert b(t) \rVert_{L^{2}}^{2} + 2 \int_{0}^{t} \nu \lVert \nabla u \rVert_{L^{2}}^{2} + \eta_{h} \lVert \Lambda^{\frac{3}{2}} b_{h} \rVert_{L^{2}}^{2} + \eta_{v} \lVert \nabla b_{v} \rVert_{L^{2}}^{2} ds = \lVert u_{0} \rVert_{L^{2}}^{2} + \lVert b_{0} \rVert_{L^{2}}^{2}. 
\end{equation} 
We take $L^{2}(\mathbb{R}^{2})$-inner products on \eqref{est 67} with $(-\Delta u, -\Delta b)$ to deduce 
\begin{equation}\label{est 70} 
\frac{1}{2} \partial_{t} (\lVert \nabla u \rVert_{L^{2}}^{2} + \lVert \nabla b\rVert_{L^{2}}^{2}) + \nu \lVert \Delta u \rVert_{L^{2}}^{2} + \eta_{h} \lVert \Lambda^{\frac{5}{2}} b_{h} \rVert_{L^{2}}^{2} + \eta_{v} \lVert \Delta b_{v} \rVert_{L^{2}}^{2} = \sum_{i=1}^{5} \RomanI_{i} 
\end{equation} 
where $\RomanI_{i}$ for $i \in \{1,\hdots, 5\}$ are same as those in \eqref{est 14}, which we recall here for convenience:
\begin{align*}
\RomanI_{1} \triangleq&  \int_{\mathbb{R}^{2}} (u\cdot\nabla) u \cdot \Delta u dx,\\
\RomanI_{2} \triangleq&  \int_{\mathbb{R}^{2}} (u\cdot\nabla) b \cdot \Delta b dx,\\
\RomanI_{3} \triangleq& -\int_{\mathbb{R}^{2}} (b\cdot\nabla) b \cdot \Delta u dx,\\
\RomanI_{4} \triangleq& -\int_{\mathbb{R}^{2}} (b\cdot\nabla) u \cdot \Delta b dx ,\\
\RomanI_{5} \triangleq& \epsilon \int_{\mathbb{R}^{2}} \nabla \times (j\times b) \cdot \Delta b dx. 
\end{align*}
Similarly to \eqref{est 23} we can estimate by H$\ddot{\mathrm{o}}$lder's, Gagliardo-Nirenberg, and Young's inequality, as well as Sobolev embedding $\dot{H}^{\frac{1}{2}}(\mathbb{R}^{2}) \hookrightarrow L^{4}(\mathbb{R}^{2})$, 
\begin{align}
&\RomanI_{1} + \RomanI_{2} + \RomanI_{3} + \RomanI_{4} \overset{\eqref{est 23}}{\lesssim} \lVert \nabla u \rVert_{L^{2}} \lVert \nabla u \rVert_{L^{4}}^{2} + \lVert \nabla u \rVert_{L^{2}} \lVert \nabla b_{h} \rVert_{L^{4}}^{2} + \lVert \nabla u \rVert_{L^{2}} \lVert \nabla b_{v} \rVert_{L^{4}}^{2} \nonumber \\
&\hspace{15mm} \lesssim \lVert \nabla u \rVert_{L^{2}}^{2} \lVert \Delta u \rVert_{L^{2}} + \lVert \nabla u \rVert_{L^{2}} \lVert \Lambda^{\frac{3}{2}} b_{h} \rVert_{L^{2}}^{2} + \lVert \nabla u \rVert_{L^{2}} \lVert \nabla b_{v} \rVert_{L^{2}} \lVert \Delta b_{v} \rVert_{L^{2}} \nonumber\\
&\hspace{15mm} \leq \frac{\nu}{2} \lVert \Delta u \rVert_{L^{2}}^{2} + \frac{\eta_{v}}{2} \lVert \Delta b_{v} \rVert_{L^{2}}^{2} + C (1+ \lVert \nabla u \rVert_{L^{2}}^{2} + \lVert \nabla b \rVert_{L^{2}}^{2})(\lVert \nabla u \rVert_{L^{2}}^{2} + \lVert \Lambda^{\frac{3}{2}} b_{h} \rVert_{L^{2}}^{2}).\label{est 71} 
\end{align}
For the Hall term $\RomanI_{5}$ in \eqref{est 14}, we split again 
\begin{equation}\label{est 72} 
\RomanI_{5} \overset{\eqref{est 16}}{=} \RomanI_{5,1} + \RomanI_{5,2} \overset{\eqref{est 21} \eqref{est 22}}{=} \sum_{i=1}^{6} \RomanI_{5,1,i} + \RomanI_{5,2,i} 
\end{equation} 
where $\RomanI_{5,1,1} + \RomanI_{5,1,3} = \RomanI_{5,2,1} + \RomanI_{5,2,3} = 0$ due to \eqref{est 19} and \eqref{est 20} so that we only need to estimate $\RomanI_{5,1,i}$ and $\RomanI_{5,2,i}$ for $i \in \{2,4,5,6\}$ from \eqref{est 17} and \eqref{est 18}. It turns out that we may bound them all as follows:
\begin{subequations}\label{est 73} 
\begin{align}
 \RomanI_{5,1,2} =& \epsilon  \int_{\mathbb{R}^{2}} j_{3} \partial_{1} b_{2} \partial_{1} j_{1} dx =\epsilon    \int_{\mathbb{R}^{2}} (\partial_{1} b_{2} - \partial_{2} b_{1}) \partial_{1} b_{2} \partial_{1}\partial_{2} b_{3} dx \nonumber\\
&= - \epsilon   \int_{\mathbb{R}^{2}} \partial_{1} [ (\partial_{1} b_{2} -\partial_{2} b_{1}) \partial_{1} b_{2} ] \partial_{2} b_{3} dx \lesssim \int_{\mathbb{R}^{2}} \lvert \nabla b_{h} \rvert \lvert \nabla^{2} b_{h} \rvert \lvert \nabla b \rvert dx,\\
 \RomanI_{5,1,4} =& -\epsilon  \int_{\mathbb{R}^{2}} j_{3} \partial_{1} b_{1} \partial_{1} j_{2} dx =\epsilon   \int_{\mathbb{R}^{2}} (\partial_{1} b_{2} - \partial_{2} b_{1}) \partial_{1} b_{1} \partial_{1}\partial_{1} b_{3} dx \nonumber\\
 &= - \epsilon  \int_{\mathbb{R}^{2}} \partial_{1} [(\partial_{1} b_{2} - \partial_{2} b_{1}) \partial_{1} b_{1}] \partial_{1} b_{3} dx \lesssim \int_{\mathbb{R}^{2}} \lvert \nabla b_{h} \rvert \lvert \nabla^{2} b_{h} \rvert \lvert \nabla b \rvert dx,\\
 \RomanI_{5,1,5} =& -\epsilon  \int_{\mathbb{R}^{2}} j_{1} \partial_{1} b_{2}\partial_{1} j_{3} dx \nonumber\\
 &= -\epsilon  \int_{\mathbb{R}^{2}} \partial_{2} b_{3} \partial_{1} b_{2} \partial_{1} (\partial_{1} b_{2} - \partial_{2} b_{1}) dx \lesssim \int_{\mathbb{R}^{2}} \lvert \nabla b \rvert \lvert \nabla b_{h} \rvert \lvert \nabla^{2} b_{h} \rvert dx, \\
 \RomanI_{5,1,6} =& \epsilon  \int_{\mathbb{R}^{2}} j_{2} \partial_{1} b_{1} \partial_{1} j_{3} dx \nonumber\\
 &= -\epsilon  \int_{\mathbb{R}^{2}} \partial_{1} b_{3} \partial_{1} b_{1} \partial_{1} (\partial_{1} b_{2} - \partial_{2} b_{1}) dx \lesssim \int_{\mathbb{R}^{2}} \lvert \nabla b \rvert \lvert \nabla b_{h} \rvert \lvert \nabla^{2} b_{h} \rvert dx,  \\
 \RomanI_{5,2,2} =& \epsilon  \int_{\mathbb{R}^{2}} j_{3} \partial_{2} b_{2} \partial_{2} j_{1} dx = \epsilon  \int_{\mathbb{R}^{2}} (\partial_{1} b_{2} - \partial_{2} b_{1}) \partial_{2} b_{2}\partial_{2}\partial_{2} b_{3} dx \nonumber\\
 & = -\epsilon   \int_{\mathbb{R}^{2}}\partial_{2} [(\partial_{1} b_{2} - \partial_{2} b_{1}) \partial_{2} b_{2} ] \partial_{2} b_{3} dx \lesssim \int_{\mathbb{R}^{2}} \lvert \nabla b_{h} \rvert \lvert \nabla^{2} b_{h} \rvert \lvert \nabla b \rvert dx, \\
 \RomanI_{5,2,4} =&-\epsilon  \int_{\mathbb{R}^{2}} j_{3} \partial_{2} b_{1} \partial_{2} j_{2}dx = \epsilon  \int_{\mathbb{R}^{2}} (\partial_{1} b_{2} - \partial_{2} b_{1}) \partial_{2} b_{1}\partial_{2}\partial_{1} b_{3} dx \nonumber\\
 & = -\epsilon  \int_{\mathbb{R}^{2}} \partial_{2} [(\partial_{1} b_{2} - \partial_{2} b_{1}) \partial_{2} b_{1} ] \partial_{1} b_{3} dx \lesssim \int_{\mathbb{R}^{2}} \lvert \nabla b_{h} \rvert \lvert \nabla^{2} b_{h} \rvert \lvert \nabla b \rvert dx,  \\
 \RomanI_{5,2,5} =& -\epsilon   \int_{\mathbb{R}^{2}} j_{1} \partial_{2} b_{2}\partial_{2} j_{3} dx \nonumber\\
 &=-\epsilon  \int_{\mathbb{R}^{2}}\partial_{2} b_{3}\partial_{2}b_{2}\partial_{2}(\partial_{1} b_{2} - \partial_{2} b_{1}) dx \lesssim \int_{\mathbb{R}^{2}} \lvert \nabla b \rvert \lvert \nabla b_{h} \rvert \lvert \nabla^{2} b_{h} \rvert dx, \\
 \RomanI_{5,2,6} =&\epsilon   \int_{\mathbb{R}^{2}} j_{2} \partial_{2} b_{1}\partial_{2} j_{3} dx \nonumber\\
 & = -\epsilon  \int_{\mathbb{R}^{2}} \partial_{1} b_{3} \partial_{2} b_{1}\partial_{2} (\partial_{1} b_{2} - \partial_{2} b_{1}) dx \lesssim \int_{\mathbb{R}^{2}} \lvert \nabla b \rvert \lvert \nabla b_{h} \rvert \lvert \nabla^{2} b_{h} \rvert dx. 
\end{align}
\end{subequations} 
\begin{remark}\label{Remark 3.3}
Here, we emphasize again the importance of the cancellations in \eqref{est 19} and \eqref{est 20}; e.g., it is not clear how we can bound 
\begin{align*}
\RomanI_{5,1,1} \overset{\eqref{est 17}}{=} -\epsilon \int_{\mathbb{R}^{2}} j_{2} \partial_{1} b_{3} \partial_{1} j_{1} dx = \epsilon \int_{\mathbb{R}^{2}} (\partial_{1} b_{3})^{2} \partial_{1} \partial_{2} b_{3} dx 
\end{align*}
by a constant multiple of $\int_{\mathbb{R}^{2}} \lvert \nabla b \rvert \lvert \nabla b_{h} \rvert \lvert \nabla^{2} b_{h} \rvert dx$. 
\end{remark}
Due to \eqref{est 73}, we are able to bound by H$\ddot{\mathrm{o}}$lder's inequality, Sobolev embedding $\dot{H}^{\frac{1}{2}}(\mathbb{R}^{2}) \hookrightarrow L^{4}(\mathbb{R}^{2})$, and Young's inequality 
\begin{align}
\RomanI_{5} \overset{\eqref{est 72}\eqref{est 73}}{\lesssim}& \lVert \nabla b_{h} \rVert_{L^{4}} \lVert \nabla^{2} b_{h} \rVert_{L^{4}} \lVert \nabla b \rVert_{L^{2}} \nonumber\\
\lesssim& \lVert \Lambda^{\frac{3}{2}} b_{h} \rVert_{L^{2}} \lVert \Lambda^{\frac{5}{2}} b_{h} \rVert_{L^{2}} \lVert \nabla b \rVert_{L^{2}} \leq \frac{\eta_{h}}{2} \lVert \Lambda^{\frac{5}{2}} b_{h} \rVert_{L^{2}}^{2} + C\lVert \Lambda^{\frac{3}{2}} b_{h} \rVert_{L^{2}}^{2} \lVert \nabla b \rVert_{L^{2}}^{2}.  \label{est 74}
\end{align} 
Applying \eqref{est 71} and \eqref{est 74} to \eqref{est 70} gives us 
\begin{align}
&\partial_{t} (\lVert \nabla u \rVert_{L^{2}}^{2} + \lVert \nabla b \rVert_{L^{2}}^{2})+ \nu \lVert \Delta u \rVert_{L^{2}}^{2} + \eta_{h} \lVert \Lambda^{\frac{5}{2}} b_{h} \rVert_{L^{2}}^{2} + \eta_{v} \lVert \Delta b_{v} \rVert_{L^{2}}^{2} \nonumber\\
& \hspace{15mm} \lesssim (1+ \lVert \nabla u \rVert_{L^{2}}^{2} + \lVert \nabla b \rVert_{L^{2}}^{2}) (\lVert \nabla u \rVert_{L^{2}}^{2} + \lVert \Lambda^{\frac{3}{2}} b_{h} \rVert_{L^{2}}^{2}).\label{est 75}
\end{align} 
Applying Gronwall's inequality on \eqref{est 75} and relying on \eqref{est 69} complete the proof of the $H^{1}(\mathbb{R}^{2})$-bound and therefore that of Theorem \ref{Theorem 2.4}. 

\subsection{Proof of Theorem \ref{Theorem 2.5}}  
Local existence of the unique solution to \eqref{est 78} starting from the given $(u_{0}, b_{0})$ can be proven following previous works such as \cite[Theorem 2.2]{CDL14}. Relying on \eqref{est 11} we obtain the following energy identity:
\begin{equation}\label{est 81} 
\lVert u(t) \rVert_{L^{2}}^{2} + \lVert b(t) \rVert_{L^{2}}^{2} + 2 \int_{0}^{t} \nu \lVert \Lambda^{\frac{5}{4}} u \rVert_{L^{2}}^{2} + \eta_{h} \lVert \Lambda^{\frac{7}{4}} b_{h} \rVert_{L^{2}}^{2} + \eta_{v} \lVert \Lambda^{\frac{5}{4}} b_{v} \rVert_{L^{2}}^{2} ds = \lVert u_{0} \rVert_{L^{2}}^{2} + \lVert b_{0} \rVert_{L^{2}}^{2}. 
\end{equation} 
Although an $H^{1}(\mathbb{R}^{3})$-bound does not suffice for the solution to the 3-D Hall-MHD system to bootstrap to higher regularity, it does for \eqref{est 78} due to its hyper-diffusion; that is the content of the following proposition, the proof of which is left in the Appendix for completeness. 
\begin{proposition}\label{Proposition 3.4} 
Under the hypothesis of Theorem \ref{Theorem 2.5}, if $(u,b)$ is a solution to \eqref{est 78} starting from the given $(u_{0}, b_{0})$ over $[0,T]$, then for all $t \in (0, T]$, $(u,b)$ satisfies 
\begin{align}
& \lVert u(t) \rVert_{H^{m}}^{2} + \lVert b(t) \rVert_{H^{m}}^{2} + \int_{0}^{t} \nu \lVert \Lambda^{\frac{5}{4}} u \rVert_{H^{m}}^{2} + \eta_{h} \lVert \Lambda^{\frac{7}{4}} b_{h} \rVert_{H^{m}}^{2} + \eta_{v}\lVert \Lambda^{\frac{5}{4}} b_{v} \rVert_{H^{m}}^{2} ds \nonumber \\
\lesssim& (1+ \lVert u_{0} \rVert_{H^{m}}^{2} + \lVert b_{0} \rVert_{H^{m}}^{2}) e^{\int_{0}^{t} 1+ \lVert \Lambda^{\frac{5}{4}} u \rVert_{L^{2}}^{2} + \lVert \Lambda^{\frac{9}{4}} b \rVert_{L^{2}}^{2} ds}.  \label{est 85}
\end{align}
\end{proposition}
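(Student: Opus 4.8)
The plan is to carry out an $H^m(\mathbb{R}^3)$-level energy estimate on the system \eqref{est 78}, mirroring the $\dot{H}^1$-estimate used in the proof of Theorem \ref{Theorem 2.4} but one order higher and in three dimensions. First I would apply $\Lambda^m$ to \eqref{est 79}-\eqref{est 80}, take $L^2(\mathbb{R}^3)$-inner products with $(\Lambda^m u, \Lambda^m b)$, and use the divergence-free conditions together with \eqref{est 11} and the identity \eqref{key}. The three diffusion terms then produce, on the left-hand side, precisely the dissipative quantities $\nu \lVert \Lambda^{m+\frac{5}{4}} u \rVert_{L^2}^2$, $\eta_h \lVert \Lambda^{m+\frac{7}{4}} b_h \rVert_{L^2}^2$, and $\eta_v \lVert \Lambda^{m+\frac{5}{4}} b_v \rVert_{L^2}^2$ appearing in \eqref{est 85}, while the right-hand side collects the nonlinear contributions, which I would split exactly as in \eqref{est 14} into the advection term, the two MHD coupling terms, and the Hall term.

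For the advection term $\int_{\mathbb{R}^3} \Lambda^m [(u\cdot\nabla) u] \cdot \Lambda^m u \, dx$ I would subtract off $\int_{\mathbb{R}^3} (u\cdot\nabla) \Lambda^m u \cdot \Lambda^m u \, dx = 0$ and estimate the resulting commutator by a Kato-Ponce inequality, distributing derivatives so that the top-order factor is $\Lambda^{m+\frac{5}{4}} u$; a Young's inequality then absorbs it into $\nu \lVert \Lambda^{m+\frac{5}{4}} u \rVert_{L^2}^2$ at the cost of a term of the form $C \lVert \Lambda^{\frac{5}{4}} u \rVert_{L^2}^2 \lVert u \rVert_{H^m}^2$, which is exactly the source of the $\lVert \Lambda^{\frac{5}{4}} u \rVert_{L^2}^2$ in the Gronwall exponent of \eqref{est 85}. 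The two coupling terms $\int_{\mathbb{R}^3} \Lambda^m [(b\cdot\nabla) b] \cdot \Lambda^m u \, dx$ and $-\int_{\mathbb{R}^3} \Lambda^m [(b\cdot\nabla) u] \cdot \Lambda^m b \, dx$ I would handle together: their top-order parts cancel via $\int_{\mathbb{R}^3} (b\cdot\nabla) \Lambda^m b \cdot \Lambda^m u + (b\cdot\nabla) \Lambda^m u \cdot \Lambda^m b \, dx = \int_{\mathbb{R}^3} (b\cdot\nabla)(\Lambda^m b \cdot \Lambda^m u) \, dx = 0$ by $\nabla\cdot b = 0$, leaving commutators controllable by the $b$-dissipation.

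The heart of the matter is the Hall term $\epsilon \int_{\mathbb{R}^3} \Lambda^m [\nabla \times (j\times b)] \cdot \Lambda^m b \, dx$, which is one derivative more singular than everything else and for which the available vertical dissipation $\Lambda^{m+\frac{5}{4}} b_v$ is by itself insufficient. Here I would invoke the three-dimensional cancellations \eqref{new1}, \eqref{new2}, and \eqref{new3} --- the analogues of the planar identities \eqref{est 19}-\eqref{est 20} --- to eliminate precisely those contributions in which the top derivatives fall on the weakly-diffused vertical component $b_v$. After this cancellation every surviving term carries its highest derivatives on $b_h$, so a H\"older estimate together with Gagliardo-Nirenberg interpolation and the Sobolev embedding into $L^4(\mathbb{R}^3)$ bounds the Hall term by a constant multiple of $\lVert \Lambda^{\frac{9}{4}} b \rVert_{L^2} \lVert \Lambda^{m-2} b \rVert_{L^2} \lVert \Lambda^{m+\frac{7}{4}} b_h \rVert_{L^2}$; a Young's inequality absorbs the factor $\lVert \Lambda^{m+\frac{7}{4}} b_h \rVert_{L^2}$ into the horizontal hyper-dissipation $\eta_h \lVert \Lambda^{m+\frac{7}{4}} b_h \rVert_{L^2}^2$ and produces the term $C \lVert \Lambda^{\frac{9}{4}} b \rVert_{L^2}^2 \lVert b \rVert_{H^m}^2$, which is the origin of the $\lVert \Lambda^{\frac{9}{4}} b \rVert_{L^2}^2$ in the exponent of \eqref{est 85}.

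Collecting all contributions yields a differential inequality of the form $\partial_t (\lVert u \rVert_{H^m}^2 + \lVert b \rVert_{H^m}^2) + \text{(dissipation)} \lesssim (1 + \lVert \Lambda^{\frac{5}{4}} u \rVert_{L^2}^2 + \lVert \Lambda^{\frac{9}{4}} b \rVert_{L^2}^2)(1 + \lVert u \rVert_{H^m}^2 + \lVert b \rVert_{H^m}^2)$, and Gronwall's inequality together with the energy identity \eqref{est 81} gives \eqref{est 85}. The main obstacle, as in the planar case, is the Hall estimate: one must verify that the cancellations \eqref{new1}-\eqref{new3} genuinely remove every top-order occurrence of $b_v$, so that the extra quarter-derivative of the horizontal hyper-diffusion --- rather than the merely critical vertical diffusion --- is what closes the estimate.
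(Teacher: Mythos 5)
Your overall skeleton (apply $m$ derivatives, take $L^{2}$-inner products, cancel the top-order transport and coupling terms via the divergence-free conditions and \eqref{key}, estimate the remaining commutators, and close with Gronwall) matches the paper, and you correctly identify $\lVert \Lambda^{\frac{5}{4}} u \rVert_{L^{2}}^{2}$ and $\lVert \Lambda^{\frac{9}{4}} b \rVert_{L^{2}}^{2}$ as the sources of the Gronwall exponent. However, your treatment of the Hall term contains a genuine gap and also misreads the division of labor in the paper. You propose to invoke the cancellations \eqref{new1}, \eqref{new2}, \eqref{new3} \emph{inside} the $H^{m}$-level commutator so as to remove every top-order occurrence of $b_{v}$ and land all highest derivatives on $b_{h}$. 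But those identities are trilinear cancellations with exactly one derivative on each factor (they arise from the specific structure $\partial_{k}b_{3}\left(j_{2}\partial_{k}j_{1}-j_{1}\partial_{k}j_{2}\right)$ and its analogues), and they do not survive the Leibniz expansion of $D^{\alpha}(j\times b)$ for $\lvert\alpha\rvert=m>1$: the terms no longer pair up, and you give no argument that the $(m+1)$-st order derivatives of $b_{3}$ that remain can all be integrated by parts onto $b_{h}$. Your claimed bound $\lVert\Lambda^{\frac{9}{4}}b\rVert_{L^{2}}\lVert\Lambda^{m-2}b\rVert_{L^{2}}\lVert\Lambda^{m+\frac{7}{4}}b_{h}\rVert_{L^{2}}$ is therefore unsubstantiated (and, as written, three $L^{2}$-type norms do not form a H\"older triple for a trilinear integral).

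The paper's proof of Proposition \ref{Proposition 3.4} does not use \eqref{new1}--\eqref{new3} at all. After the elementary cancellation $\int_{\mathbb{R}^{3}}(D^{\alpha}j\times b)\cdot D^{\alpha}j\,dx=0$ from \eqref{key}, the Hall commutator $\RomanIII_{4}$ is bounded crudely by $\lVert\Lambda^{\frac{9}{4}}b\rVert_{L^{2}}\lVert b\rVert_{H^{m}}\lVert\Lambda^{\frac{5}{4}}b\rVert_{H^{m}}$, i.e., the dissipative factor is only $\Lambda^{\frac{5}{4}}$ of the \emph{full} field $b$ in $H^{m}$. The vertical part of $\delta\lVert\Lambda^{\frac{5}{4}}b\rVert_{H^{m}}^{2}$ is absorbed directly by $\eta_{v}\lVert\Lambda^{\frac{5}{4}}b_{v}\rVert_{H^{m}}^{2}$, and the horizontal part is absorbed by the interpolation $\lVert\Lambda^{\frac{5}{4}}b_{h}\rVert_{H^{m}}^{2}\lesssim\lVert b_{h}\rVert_{L^{2}}^{2}+\lVert\Lambda^{\frac{7}{4}}b_{h}\rVert_{H^{m}}^{2}$; the price is exactly the factor $\lVert\Lambda^{\frac{9}{4}}b\rVert_{L^{2}}^{2}$ in the Gronwall exponent, which the statement of \eqref{est 85} explicitly permits. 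The cancellations \eqref{new1}--\eqref{new3} are needed only afterwards, in the separate $\dot{H}^{1}$-estimate \eqref{est 82}--\eqref{est 102}, to show that this exponent is in fact time-integrable. So your premise that the vertical dissipation ``is by itself insufficient'' at the $H^{m}$ level is the wrong diagnosis: the proposition is deliberately conditional, and the step you would need to justify --- the survival of the componentwise cancellations under $D^{\alpha}$ --- is both unproven and unnecessary.
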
 
Due to Proposition \ref{Proposition 3.4} and \eqref{est 81}, the proof of Theorem \ref{Theorem 2.5} boils down to proving that $\int_{0}^{T} \lVert \Lambda^{\frac{9}{4}} b \rVert_{L^{2}}^{2} ds < \infty$. In this endeavor, we take $L^{2}(\mathbb{R}^{3})$-inner products on \eqref{est 78} with $(-\Delta u, -\Delta b)$ to deduce 
\begin{equation}\label{est 82} 
\frac{1}{2} \partial_{t} (\lVert \nabla u \rVert_{L^{2}}^{2} + \lVert \nabla b\rVert_{L^{2}}^{2}) + \nu \lVert \Lambda^{\frac{9}{4}} u \rVert_{L^{2}}^{2} + \eta_{h} \lVert \Lambda^{\frac{11}{4}} b_{h} \rVert_{L^{2}}^{2} + \eta_{v} \lVert \Lambda^{\frac{9}{4}} b_{v} \rVert_{L^{2}}^{2} = \sum_{i=1}^{5} \RomanIV_{i} 
\end{equation} 
where 
\begin{align*}
\RomanIV_{1} \triangleq&  \int_{\mathbb{R}^{3}} (u\cdot\nabla) u \cdot \Delta u dx,\\
\RomanIV_{2} \triangleq&  \int_{\mathbb{R}^{3}} (u\cdot\nabla) b \cdot \Delta b dx,\\
\RomanIV_{3} \triangleq& -\int_{\mathbb{R}^{3}} (b\cdot\nabla) b \cdot \Delta u dx,\\
\RomanIV_{4} \triangleq& -\int_{\mathbb{R}^{3}} (b\cdot\nabla) u \cdot \Delta b dx ,\\
\RomanIV_{5} \triangleq& \epsilon \int_{\mathbb{R}^{3}} \nabla \times (j\times b) \cdot \Delta b dx. 
\end{align*}
Using H$\ddot{\mathrm{o}}$lder's inequality, the Sobolev embedding $\dot{H}^{\frac{3}{4}}(\mathbb{R}^{3})\hookrightarrow L^{4}(\mathbb{R}^{3})$, Gagliardo-Nirenberg and Young's inequalities, we can compute similarly to \eqref{est 71}  
\begin{align}
&\RomanIV_{1} + \RomanIV_{2} + \RomanIV_{3} + \RomanIV_{4} \lesssim \lVert \nabla u \rVert_{L^{2}} \lVert \nabla u \rVert_{L^{4}}^{2} + \lVert \nabla u \rVert_{L^{2}} \lVert \nabla b_{h} \rVert_{L^{4}}^{2} + \lVert \nabla u \rVert_{L^{2}} \lVert \nabla b_{v} \rVert_{L^{4}}^{2} \label{est 86}\\
\lesssim& \lVert \nabla u \rVert_{L^{2}} \lVert \Lambda^{\frac{5}{4}} u \rVert_{L^{2}} \lVert \Lambda^{\frac{9}{4}} u \rVert_{L^{2}} + \lVert \nabla u \rVert_{L^{2}} \lVert \Lambda^{\frac{7}{4}} b_{h} \rVert_{L^{2}}^{2} + \lVert \nabla u \rVert_{L^{2}} \lVert \Lambda^{\frac{5}{4}} b_{v} \rVert_{L^{2}} \lVert \Lambda^{\frac{9}{4}} b_{v} \rVert_{L^{2}} \nonumber \\
\leq& \frac{\nu}{2} \lVert \Lambda^{\frac{9}{4}} u \rVert_{L^{2}}^{2} + \frac{\eta_{v}}{2} \lVert \Lambda^{\frac{9}{4}} b_{v} \rVert_{L^{2}}^{2} + C (1+ \lVert \nabla u \rVert_{L^{2}}^{2} + \lVert \nabla b \rVert_{L^{2}}^{2})(\lVert \Lambda^{\frac{5}{4}} u \rVert_{L^{2}}^{2} + \lVert \Lambda^{\frac{7}{4}} b_{h} \rVert_{L^{2}}^{2} + \lVert \Lambda^{\frac{5}{4}} b_{v} \rVert_{L^{2}}^{2}). \nonumber 
\end{align}
Now we decompose the Hall term carefully as follows: 
\begin{equation}\label{est 87} 
\RomanIV_{5} = \sum_{k=1}^{3} \epsilon \int_{\mathbb{R}^{3}} \nabla \times (j\times b) \cdot \partial_{k}^{2} b dx  
\end{equation} 
where for any $k \in \{1,2,3\}$, 
\begin{align}
\int_{\mathbb{R}^{3}} \nabla \times (j\times b) \cdot \partial_{k}^{2} b dx 
=&  - \int_{\mathbb{R}^{3}} \partial_{k} (j\times b) \cdot \partial_{k} j dx \nonumber\\ 
\overset{\eqref{key}}{=}&  -\int_{\mathbb{R}^{3}}   j\times \partial_{k} b \cdot \partial_{k} j dx = \sum_{i=1}^{6} \RomanV_{k,i} \label{est 88} 
\end{align}
and 
\begin{subequations}\label{est 89} 
\begin{align}
\RomanV_{k,1} \triangleq& -\int_{\mathbb{R}^{3}} j_{2}\partial_{k} b_{3}\partial_{k} j_{1} dx, \\
\RomanV_{k,2} \triangleq& \int_{\mathbb{R}^{3}} j_{3} \partial_{k} b_{2} \partial_{k} j_{1} dx, \\
\RomanV_{k,3} \triangleq& \int_{\mathbb{R}^{3}} j_{1}\partial_{k} b_{3} \partial_{k} j_{2} dx, \\
\RomanV_{k,4} \triangleq& -\int_{\mathbb{R}^{3}} j_{3}\partial_{k}b_{1}\partial_{k} j_{2} dx, \\
\RomanV_{k,5} \triangleq& -\int_{\mathbb{R}^{3}} j_{1}\partial_{k} b_{2} \partial_{k} j_{3} dx, \\
\RomanV_{k,6} \triangleq& \int_{\mathbb{R}^{3}} j_{2} \partial_{k}b_{1}\partial_{k} j_{3} dx. 
\end{align}
\end{subequations} 
We take the lessons we learned from \eqref{est 19} and \eqref{est 20} and first couple strategically from \eqref{est 89} 
\begin{align}
\RomanV_{k,1} + \RomanV_{k,3} =& -\int_{\mathbb{R}^{3}} j_{2} \partial_{k} b_{3} \partial_{k} j_{1} - j_{1} \partial_{k} b_{3} \partial_{k} j_{2} dx \nonumber \\
=& -\int_{\mathbb{R}^{3}}\partial_{k} b_{3} (j_{2}\partial_{k} j_{1} - j_{1}\partial_{k} j_{2}) dx = \sum_{l=1}^{8} \RomanVI_{k,l} \label{est 90}
\end{align}
where 
\begin{subequations}\label{est 91}
\begin{align}
\RomanVI_{k,1} \triangleq& \int_{\mathbb{R}^{3}}\partial_{k} b_{3} \partial_{1} b_{3}\partial_{k}\partial_{2} b_{3} dx, \\
\RomanVI_{k,2} \triangleq& -\int_{\mathbb{R}^{3}}\partial_{k}b_{3}\partial_{1}b_{3}\partial_{k}\partial_{3} b_{2} dx, \\
\RomanVI_{k,3} \triangleq& -\int_{\mathbb{R}^{3}} \partial_{k} b_{3}\partial_{3} b_{1} \partial_{k}\partial_{2} b_{3} dx, \\
\RomanVI_{k,4} \triangleq& \int_{\mathbb{R}^{3}} \partial_{k} b_{3}\partial_{3} b_{1} \partial_{k} \partial_{3} b_{2} dx, \\
\RomanVI_{k,5} \triangleq& - \int_{\mathbb{R}^{3}}\partial_{k} b_{3} \partial_{2} b_{3}\partial_{k}\partial_{1} b_{3} dx, \\
\RomanVI_{k,6} \triangleq& \int_{\mathbb{R}^{3}} \partial_{k} b_{3}\partial_{2}b_{3}\partial_{k}\partial_{3} b_{1} dx, \\
\RomanVI_{k,7} \triangleq&\int_{\mathbb{R}^{3}} \partial_{k} b_{3} \partial_{3} b_{2}\partial_{k}\partial_{1} b_{3} dx, \\
\RomanVI_{k,8} \triangleq&- \int_{\mathbb{R}^{3}} \partial_{k} b_{3}\partial_{3} b_{2}\partial_{k}\partial_{3} b_{1} dx. 
\end{align}
\end{subequations} 
Then we observe that analogously to \eqref{est 19} and \eqref{est 20}
\begin{align}
\RomanVI_{k,1} + \RomanVI_{k,5} =& \int_{\mathbb{R}^{3}} \partial_{k} b_{3} \partial_{1} b_{3} \partial_{k} \partial_{2} b_{3} - \partial_{k} b_{3}\partial_{2} b_{3} \partial_{k} \partial_{1} b_{3} dx \nonumber \\
=& \int_{\mathbb{R}^{3}} \partial_{1} b_{3} \frac{1}{2} \partial_{2} (\partial_{k} b_{3})^{2} -\partial_{2} b_{3} \frac{1}{2}\partial_{1} (\partial_{k} b_{3})^{2} dx\nonumber \\
=& -\frac{1}{2} \int_{\mathbb{R}^{3}} \partial_{1}\partial_{2} b_{3}(\partial_{k} b_{3})^{2} - \partial_{1}\partial_{2} b_{3} (\partial_{k} b_{3})^{2} dx = 0. \label{new1}
\end{align}
Remarkably, we are able to find two more cancellations upon coupling within \eqref{est 89} appropriately. Let us couple 
\begin{align}
\RomanV_{k,2} + \RomanV_{k,5} =& \int_{\mathbb{R}^{3}} j_{3} \partial_{k} b_{2} \partial_{k} j_{1} - j_{1} \partial_{k} b_{2} \partial_{k} j_{3} dx \nonumber \\
=& \int_{\mathbb{R}^{3}}\partial_{k} b_{2} (j_{3}\partial_{k} j_{1} - j_{1}\partial_{k} j_{3}) dx = \sum_{l=1}^{8} \RomanVII_{k,l} \label{est 92}
\end{align}
where 
\begin{subequations}\label{est 93} 
\begin{align}
\RomanVII_{k,1} \triangleq& \int_{\mathbb{R}^{3}}\partial_{k} b_{2} \partial_{1} b_{2}\partial_{k}\partial_{2} b_{3} dx, \\
\RomanVII_{k,2} \triangleq& -\int_{\mathbb{R}^{3}}\partial_{k}b_{2}\partial_{1}b_{2}\partial_{k}\partial_{3} b_{2} dx, \\
\RomanVII_{k,3} \triangleq& -\int_{\mathbb{R}^{3}} \partial_{k} b_{2}\partial_{2} b_{1} \partial_{k}\partial_{2} b_{3} dx, \\
\RomanVII_{k,4} \triangleq& \int_{\mathbb{R}^{3}} \partial_{k} b_{2}\partial_{2} b_{1} \partial_{k} \partial_{3} b_{2} dx, \\
\RomanVII_{k,5} \triangleq& - \int_{\mathbb{R}^{3}}\partial_{k} b_{2} \partial_{2} b_{3}\partial_{k}\partial_{1} b_{2} dx, \\
\RomanVII_{k,6} \triangleq& \int_{\mathbb{R}^{3}} \partial_{k} b_{2}\partial_{2}b_{3}\partial_{k}\partial_{2} b_{1} dx, \\
\RomanVII_{k,7} \triangleq&\int_{\mathbb{R}^{3}} \partial_{k} b_{2} \partial_{3} b_{2}\partial_{k}\partial_{1} b_{2} dx, \\
\RomanVII_{k,8} \triangleq& -\int_{\mathbb{R}^{3}} \partial_{k} b_{2}\partial_{3} b_{2}\partial_{k}\partial_{2} b_{1} dx. 
\end{align}
\end{subequations} 
We observe that $\RomanVII_{k,2} + \RomanVII_{k,7}$ vanishes together as follows: 
\begin{align}
\RomanVII_{k,2} + \RomanVII_{k,7} =& \int_{\mathbb{R}^{3}} -\partial_{k} b_{2} \partial_{1} b_{2} \partial_{k} \partial_{3} b_{2} +  \partial_{k} b_{2}\partial_{3} b_{2} \partial_{k} \partial_{1} b_{2} dx \nonumber \\
=& \int_{\mathbb{R}^{3}} -\partial_{1} b_{2} \frac{1}{2} \partial_{3} (\partial_{k} b_{2})^{2} + \partial_{3} b_{2} \frac{1}{2}\partial_{1} (\partial_{k} b_{2})^{2} dx\nonumber \\
=& \frac{1}{2} \int_{\mathbb{R}^{3}} \partial_{1}\partial_{3} b_{2}(\partial_{k} b_{2})^{2} - \partial_{1}\partial_{3} b_{2} (\partial_{k} b_{3})^{2} dx = 0. \label{new2}
\end{align}
Third, we couple from \eqref{est 89} 
\begin{align}
\RomanV_{k,4} + \RomanV_{k,6} =& -\int_{\mathbb{R}^{3}} j_{3} \partial_{k} b_{1} \partial_{k} j_{2} - j_{2} \partial_{k} b_{1} \partial_{k} j_{3} dx \nonumber \\
=& -\int_{\mathbb{R}^{3}}\partial_{k} b_{1} (j_{3}\partial_{k} j_{2} - j_{2}\partial_{k} j_{3}) dx = \sum_{l=1}^{8} \RomanVIII_{k,l} \label{est 94}
\end{align}
where 
\begin{subequations}\label{est 95} 
\begin{align}
\RomanVIII_{k,1} \triangleq& \int_{\mathbb{R}^{3}}\partial_{k} b_{1} \partial_{1} b_{2}\partial_{k}\partial_{1} b_{3} dx, \\
\RomanVIII_{k,2} \triangleq& -\int_{\mathbb{R}^{3}}\partial_{k}b_{1}\partial_{1}b_{2}\partial_{k}\partial_{3} b_{1} dx, \\
\RomanVIII_{k,3} \triangleq& -\int_{\mathbb{R}^{3}} \partial_{k} b_{1}\partial_{2} b_{1} \partial_{k}\partial_{1} b_{3} dx, \\
\RomanVIII_{k,4} \triangleq& \int_{\mathbb{R}^{3}} \partial_{k} b_{1}\partial_{2} b_{1} \partial_{k} \partial_{3} b_{1} dx, \\
\RomanVIII_{k,5} \triangleq& - \int_{\mathbb{R}^{3}}\partial_{k} b_{1} \partial_{1} b_{3}\partial_{k}\partial_{1} b_{2} dx, \\
\RomanVIII_{k,6} \triangleq& \int_{\mathbb{R}^{3}} \partial_{k} b_{1}\partial_{1}b_{3}\partial_{k}\partial_{2} b_{1} dx, \\
\RomanVIII_{k,7} \triangleq&\int_{\mathbb{R}^{3}} \partial_{k} b_{1} \partial_{3} b_{1}\partial_{k}\partial_{1} b_{2} dx, \\
\RomanVIII_{k,8} \triangleq& -\int_{\mathbb{R}^{3}} \partial_{k} b_{1}\partial_{3} b_{1}\partial_{k}\partial_{2} b_{1} dx. 
\end{align}
\end{subequations}
We observe that $\RomanVIII_{k,4} + \RomanVIII_{k,8}$ vanishes together as follows: 
\begin{align}
\RomanVIII_{k,4} + \RomanVIII_{k,8} =& \int_{\mathbb{R}^{3}} \partial_{k} b_{1} \partial_{2} b_{1} \partial_{k} \partial_{3} b_{1} - \partial_{k} b_{1}\partial_{3} b_{1} \partial_{k} \partial_{2} b_{1} dx \nonumber \\
=& \int_{\mathbb{R}^{3}} \partial_{2} b_{1} \frac{1}{2} \partial_{3} (\partial_{k} b_{1})^{2} - \partial_{3} b_{1} \frac{1}{2}\partial_{2} (\partial_{k} b_{1})^{2} dx\nonumber \\
=& \frac{1}{2} \int_{\mathbb{R}^{3}} -\partial_{2}\partial_{3} b_{1}(\partial_{k} b_{1})^{2} + \partial_{2}\partial_{3} b_{1} (\partial_{k} b_{1})^{2} dx = 0. \label{new3}
\end{align}
Therefore, we have shown that 
\begin{align}
\RomanIV_{5} \overset{\eqref{est 87}}{=}& \sum_{k=1}^{3} \epsilon \int_{\mathbb{R}^{3}}\nabla \times (j\times b) \cdot \partial_{k}^{2} b dx \overset{\eqref{est 88}}{=} \epsilon \sum_{k=1}^{3}\sum_{i=1}^{6} \RomanV_{k,i} \nonumber \\
\overset{\eqref{est 90}\eqref{est 92}\eqref{est 94}}{=}& \epsilon \sum_{k=1}^{3}\sum_{l=1}^{8} \RomanVI_{k,l} + \RomanVII_{k,l} + \RomanVIII_{k,l} \nonumber \\
\overset{\eqref{new1} \eqref{new2} \eqref{new3}}{=}& \epsilon \sum_{k=1}^{3} ( \sum_{l \in \{2,3,4,6,7,8\}} \RomanVI_{k,l} + \sum_{l\in \{1,3,4,5,6,8 \}} \RomanVII_{k,l} + \sum_{l \in \{1,2,3,5,6,7\}} \RomanVIII_{k,l}) \label{est 99}
\end{align}
We wish to bound all these terms by a constant multiples of $\int_{\mathbb{R}^{3}}\lvert \nabla b \rvert \lvert \nabla b_{h} \rvert \lvert \nabla^{2} b_{h} \rvert dx$ similarly to \eqref{est 73}. We start with $ \sum_{l \in \{2,3,4,6,7,8\}} \RomanVI_{k,l} $ which actually presents the most difficulty. First, 
\begin{align}
\sum_{k=1}^{3} \RomanVI_{k,4} + \RomanVI_{k,8} \overset{\eqref{est 91}}{=}& \sum_{k=1}^{3} \int_{\mathbb{R}^{3}} \partial_{k} b_{3} \partial_{3} b_{1} \partial_{k}\partial_{3} b_{2} - \partial_{k}b_{3}  \partial_{3} b_{2}\partial_{k}\partial_{3} b_{1} dx\nonumber\\
\lesssim& \int_{\mathbb{R}^{3}} \lvert \nabla b \rvert \lvert \nabla b_{h} \rvert \lvert \nabla^{2} b_{h} \rvert dx. \label{est 96}
\end{align} 
Second, we integrate by parts and rely on divergence-free property so that $\partial_{3}b_{3} = -\partial_{1}b_{1} - \partial_{2}b_{2}$ to deduce 
\begin{align}
& \sum_{k=1}^{3} \RomanVI_{k,2} + \RomanVI_{k,6} \nonumber\\
\overset{\eqref{est 91}}{=}& \sum_{k=1}^{3}\int_{\mathbb{R}^{3}} - \partial_{k}b_{3}\partial_{1} b_{3} \partial_{k} \partial_{3} b_{2} + \partial_{k} b_{3} \partial_{2} b_{3}\partial_{k}\partial_{3} b_{1} dx \nonumber \\
=& \sum_{k=1}^{3}\int_{\mathbb{R}^{3}} (\partial_{k}\partial_{3}b_{3} \partial_{1}b_{3} + \partial_{k}b_{3}\partial_{1}\partial_{3}b_{3}) \partial_{k} b_{2} - (\partial_{k}\partial_{3} b_{3} \partial_{2}b_{3} + \partial_{k}b_{3}\partial_{2}\partial_{3} b_{3}) \partial_{k} b_{1} dx \nonumber\\
\lesssim& \int_{\mathbb{R}^{3}} \lvert \nabla b_{h} \rvert \lvert \nabla b \rvert \lvert \nabla^{2} b_{h} \rvert dx. \label{est 97}
\end{align}
Finally, we integrate by parts and rely on $\partial_{3}b_{3} = -\partial_{1}b_{1} - \partial_{2}b_{2}$ again to deduce 
\begin{align}
& \sum_{k=1}^{3} \RomanVI_{k,3} + \RomanVI_{k,7} \nonumber \\
\overset{\eqref{est 91}}{=}& \sum_{k=1}^{3} \int_{\mathbb{R}^{3}} - \partial_{k}b_{3}\partial_{3}b_{1}\partial_{k}\partial_{2}b_{3} + \partial_{k}b_{3}\partial_{3}b_{2}\partial_{k}\partial_{1}b_{3} dx \nonumber \\
=& \sum_{k=1}^{3}\int_{\mathbb{R}^{3}} - \partial_{3}b_{1}\frac{1}{2}\partial_{2} (\partial_{k}b_{3})^{2} + \partial_{3}b_{2}\frac{1}{2}\partial_{1}(\partial_{k}b_{3})^{2} dx \nonumber \\
=& \sum_{k=1}^{3} \frac{1}{2} \int_{\mathbb{R}^{3}} - \partial_{2} b_{1} \partial_{3} (\partial_{k} b_{3})^{2} + \partial_{1} b_{2} \partial_{3} (\partial_{k} b_{3})^{2} dx \nonumber \\
=& \sum_{k=1}^{3}\int_{\mathbb{R}^{3}} - \partial_{2}b_{1}\partial_{k}b_{3}\partial_{k}\partial_{3} b_{3} + \partial_{1}b_{2} \partial_{k} b_{3} \partial_{k} \partial_{3} b_{3} dx \lesssim \int_{\mathbb{R}^{3}} \lvert \nabla b \rvert \lvert \nabla b_{h} \rvert \lvert \nabla^{2} b_{h} \rvert dx.  \label{est 98}
\end{align}
The rest of the terms in the right hand side of \eqref{est 99} are easier to bound as follows:
\begin{align}
& \epsilon \sum_{k=1}^{3} (\sum_{l \in \{1,3,4,5,6,8 \}} \RomanVII_{k,l} + \sum_{l\in \{1,2,3,5,6,7 \}} \RomanVIII_{k,l}) \label{est 100}\\
\overset{\eqref{est 93}\eqref{est 95}}{=}& \epsilon \sum_{k=1}^{3} (\int_{\mathbb{R}^{3}} \partial_{k} b_{2} (\partial_{1} b_{2} \partial_{k} \partial_{2} b_{3} - \partial_{2} b_{1} \partial_{k} \partial_{2} b_{3} + \partial_{2} b_{1} \partial_{k}\partial_{3} b_{2} \nonumber \\
& \hspace{10mm} - \partial_{2} b_{3} \partial_{k}\partial_{1} b_{2} + \partial_{2} b_{3}\partial_{k}\partial_{2} b_{1} - \partial_{3} b_{2}\partial_{k}\partial_{2} b_{1}) dx \nonumber \\
& + \int_{\mathbb{R}^{3}}\partial_{k}b_{1} (\partial_{1} b_{2} \partial_{k}\partial_{1} b_{3} - \partial_{1}b_{2}\partial_{k}\partial_{3} b_{1} - \partial_{2} b_{1}\partial_{k}\partial_{1} b_{3} \nonumber \\
& \hspace{10mm} - \partial_{1} b_{3} \partial_{k} \partial_{1} b_{2} + \partial_{1} b_{3} \partial_{k}\partial_{2} b_{1} + \partial_{3} b_{1} \partial_{k}\partial_{1} b_{2} ) dx   \lesssim \int_{\mathbb{R}^{3}} \lvert \nabla b \rvert \lvert \nabla b_{h} \rvert \lvert \nabla^{2} b_{h} \rvert dx\nonumber 
\end{align}
where we integrated by parts on the four terms that have second partial derivatives on $b_{3}$, namely $\partial_{k}\partial_{2}b_{3}$ and $\partial_{k}\partial_{1}b_{3}$. We apply \eqref{est 96}, \eqref{est 97}, \eqref{est 98}, and \eqref{est 100} to \eqref{est 99} to conclude by the Sobolev embedding of $\dot{H}^{\frac{3}{4}}(\mathbb{R}^{3}) \hookrightarrow L^{4}(\mathbb{R}^{3})$ and Young's inequality that 
\begin{align}
\RomanIV_{5} \lesssim& \int_{\mathbb{R}^{3}} \lvert \nabla b \rvert \lvert \nabla b_{h} \rvert \lvert \nabla^{2} b_{h} \rvert dx \lesssim \lVert \nabla b \rVert_{L^{2}} \lVert \nabla b_{h} \rVert_{L^{4}} \lVert \nabla^{2} b_{h} \rVert_{L^{4}}  \nonumber\\
\lesssim& \lVert \nabla b \rVert_{L^{2}} \lVert \Lambda^{\frac{7}{4}} b_{h} \rVert_{L^{2}}\lVert \Lambda^{\frac{11}{4}} b_{h} \rVert_{L^{2}} \leq \frac{\eta_{h}}{2} \lVert \Lambda^{\frac{11}{4}} b_{h} \rVert_{L^{2}}^{2} + C\lVert \nabla b\rVert_{L^{2}}^{2} \lVert \Lambda^{\frac{7}{4}} b_{h} \rVert_{L^{2}}^{2}. \label{est 101}
\end{align} 
Applying \eqref{est 86} and \eqref{est 101} to \eqref{est 82} gives us 
\begin{align}
& \partial_{t} (\lVert \nabla u \rVert_{L^{2}}^{2} + \lVert \nabla b\rVert_{L^{2}}^{2}) + \nu \lVert \Lambda^{\frac{9}{4}} u \rVert_{L^{2}}^{2} + \eta_{h} \lVert \Lambda^{\frac{11}{4}} b_{h} \rVert_{L^{2}}^{2} + \eta_{v} \lVert \Lambda^{\frac{9}{4}} b_{v} \rVert_{L^{2}}^{2}  \label{est 102} \\
\lesssim& (1+ \lVert \nabla u \rVert_{L^{2}}^{2} + \lVert \nabla b \rVert_{L^{2}}^{2})(\lVert \Lambda^{\frac{5}{4}} u \rVert_{L^{2}}^{2} + \lVert \Lambda^{\frac{7}{4}} b_{h} \rVert_{L^{2}}^{2} + \lVert \Lambda^{\frac{5}{4}} b_{v} \rVert_{L^{2}}^{2}). \nonumber 
\end{align}
Applying Gronwall's inequality on \eqref{est 102} and relying on \eqref{est 81} allow us to deduce in particular that $\int_{0}^{T} \lVert \Lambda^{\frac{9}{4}} b \rVert_{L^{2}}^{2} dt < \infty$; applying this bound to \eqref{est 85} completes the $H^{m}(\mathbb{R}^{3})$-bound of $(u,b)$ and hence the proof of Theorem \ref{Theorem 2.5}. 

\section{Appendix: Proof of Theorem \ref{Theorem 2.3} (2)} 
As we mentioned, Propositions \ref{Proposition 3.1}-\ref{Proposition 3.2} remain valid for us. We obtain the following proposition instead of Proposition \ref{Proposition 3.3}: 
 
\begin{proposition}\label{Proposition 4.1}
Under the hypothesis of Theorem \ref{Theorem 2.3} (2), let $\nabla \times \omega $ be a smooth solution to \eqref{est 37} over $[0,T]$. Then for both $k \in \{1,2\}$, 
\begin{equation}\label{est 60} 
(\nabla \times \omega)_{k} \in L_{T}^{\infty} L_{x}^{2} \cap L_{T}^{2} \dot{H}_{x}^{1}. 
\end{equation} 
\end{proposition}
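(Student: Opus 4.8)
The plan is to mirror the proof of Proposition \ref{Proposition 3.3}, replacing the single third component by the two horizontal components $k\in\{1,2\}$ simultaneously. The starting observation is that $\nabla\cdot u=0$ forces the pointwise identity $(\nabla\times\omega)_{k}=-\Delta u_{k}$ for every $k$, so that controlling $(\nabla\times\omega)_{1},(\nabla\times\omega)_{2}$ in $L_{T}^{\infty}L_{x}^{2}\cap L_{T}^{2}\dot{H}_{x}^{1}$ is the same as controlling $\Delta u_{1},\Delta u_{2}$, and the criterion \eqref{est 61}--\eqref{est 62} on $\Delta(\Delta u_{k})$ is exactly what will appear. I would take $L^{2}(\mathbb{R}^{2})$-inner products of the first and second components of \eqref{est 37} with $(\nabla\times\omega)_{1}$ and $(\nabla\times\omega)_{2}$ and add them, obtaining $\frac{1}{2}\partial_{t}\sum_{k=1}^{2}\lVert(\nabla\times\omega)_{k}\rVert_{L^{2}}^{2}+\sum_{k=1}^{2}\lVert\nabla(\nabla\times\omega)_{k}\rVert_{L^{2}}^{2}=\mathcal{J}_{1}+\mathcal{J}_{2}+\mathcal{J}_{3}$, where $\mathcal{J}_{1},\mathcal{J}_{2},\mathcal{J}_{3}$ come respectively from the transport term $\nabla\times((u\cdot\nabla)\omega)$, the stretching term $\nabla\times((\omega\cdot\nabla)u)$, and the Hall term $\nabla\times\nabla\times((b\cdot\nabla)b)$.

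For $\mathcal{J}_{1}$ and $\mathcal{J}_{2}$ the key tool is identity \eqref{est 41} applied with $g=\nabla\times\omega$, which is divergence-free; since $\nabla\cdot\omega=0$ one gets $\sum_{k=1}^{2}\int_{\mathbb{R}^{2}}(\nabla\times f)_{k}(\nabla\times\omega)_{k}\,dx=-\int_{\mathbb{R}^{2}}f_{3}\,\Delta\omega_{3}\,dx$ for any $f$ independent of $x_{3}$. Two favourable features emerge. The stretching term reduces to $f_{3}=((\omega\cdot\nabla)u)_{3}=\omega_{1}\partial_{1}u_{3}+\omega_{2}\partial_{2}u_{3}$, which is identically zero because $\omega_{1}=\partial_{2}u_{3}$ and $\omega_{2}=-\partial_{1}u_{3}$; hence $\mathcal{J}_{2}=0$. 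The transport term reduces to $\mathcal{J}_{1}=\int_{\mathbb{R}^{2}}(u\cdot\nabla\omega_{3})\Delta\omega_{3}\,dx$; integrating by parts and using $\partial_{1}u_{1}+\partial_{2}u_{2}=0$ to cancel the top-order contribution leaves $\mathcal{J}_{1}\lesssim\int_{\mathbb{R}^{2}}\lvert\nabla u\rvert\,\lvert\nabla\omega_{3}\rvert^{2}\,dx=\int_{\mathbb{R}^{2}}\lvert\nabla u\rvert\sum_{k=1}^{2}\lvert(\nabla\times\omega)_{k}\rvert^{2}\,dx$, which by the same Gagliardo--Nirenberg and Young argument as in \eqref{est 44} absorbs a fraction of $\sum_{k=1}^{2}\lVert\nabla(\nabla\times\omega)_{k}\rVert_{L^{2}}^{2}$ and leaves a Gronwall term with the time-integrable coefficient $\lVert u\rVert_{L^{2}}\lVert\Delta u\rVert_{L^{2}}$ furnished by \eqref{est 30} and \eqref{est 46}.

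The crux is $\mathcal{J}_{3}$, which must be brought to the form dictated by the criterion. Setting $F\triangleq(b\cdot\nabla)b$ and using $\nabla\times\nabla\times F=\nabla(\nabla\cdot F)-\Delta F$ together with $(\nabla\times\omega)_{k}=-\Delta u_{k}$, one obtains $\mathcal{J}_{3}=\sum_{k=1}^{2}\int_{\mathbb{R}^{2}}[\partial_{k}(\nabla\cdot F)-\Delta F_{k}](-\Delta u_{k})\,dx$. The gradient-of-divergence part integrates by parts into $\int_{\mathbb{R}^{2}}(\nabla\cdot F)\,\Delta(\partial_{1}u_{1}+\partial_{2}u_{2})\,dx$, which vanishes because $\partial_{1}u_{1}+\partial_{2}u_{2}=-\partial_{3}u_{3}=0$, while moving the Laplacian off $F_{k}$ turns the remaining part into the clean expression $\mathcal{J}_{3}=\sum_{k=1}^{2}\int_{\mathbb{R}^{2}}(b\cdot\nabla)b_{k}\,\Delta(\Delta u_{k})\,dx$, the exact analogue of the $\RomanII_{3}$ term in \eqref{est 43}. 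From here the estimate copies \eqref{est 46 again} and \eqref{est 50}: in the Lebesgue case bound by $\sum_{k=1}^{2}\lVert b\rVert_{L^{\frac{2p_{k}}{p_{k}-2}}}\lVert\nabla b_{k}\rVert_{L^{2}}\lVert\Delta(\Delta u_{k})\rVert_{L^{p_{k}}}$, and in the $BMO$ case exploit that $b\cdot\nabla b_{k}\in\mathcal{H}^{1}$ via the div--curl lemma, since $b$ is divergence-free and $\nabla b_{k}$ is a gradient, to bound by $\sum_{k=1}^{2}\lVert b\rVert_{L^{2}}\lVert\nabla b_{k}\rVert_{L^{2}}\lVert\Delta(\Delta u_{k})\rVert_{BMO}$.

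Feeding these bounds into the energy identity, absorbing the diffusive terms, integrating in $t$ and applying H\"older precisely as in \eqref{est 49} and \eqref{est 51}, the hypothesis \eqref{est 61} or \eqref{est 62} for both $k\in\{1,2\}$ makes the right-hand side controllable by Gronwall's inequality against $\sum_{k=1}^{2}\lVert(\nabla\times\omega)_{k}\rVert_{L^{2}}^{2}$, with the remaining coefficients time-integrable thanks to Propositions \ref{Proposition 3.1}--\ref{Proposition 3.2} and \eqref{est 12}; the initial value $(\nabla\times\omega)_{k}(0)=-\Delta u_{0,k}$ lies in $L^{2}$ since $u_{0}\in H^{m}$ with $m>2$. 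This yields \eqref{est 60}. The genuinely new point, and the step I expect to require the most care, is the passage to the clean form of $\mathcal{J}_{3}$: unlike the third-component case, where $(\nabla\times\nabla\times F)_{3}=-\Delta F_{3}$ directly, the horizontal components carry the extra term $\nabla(\nabla\cdot F)$, which is generally nonzero pointwise and disappears only after being paired against the horizontally divergence-free field $(\Delta u_{1},\Delta u_{2})$ and integrated by parts.
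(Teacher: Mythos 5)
Your proposal is correct and follows essentially the same route as the paper's proof: the same energy estimate on $(\nabla\times\omega)_{1},(\nabla\times\omega)_{2}$, the same reduction of the Hall contribution to $\sum_{k=1}^{2}\int_{\mathbb{R}^{2}}(b\cdot\nabla)b_{k}\,\Delta(\Delta u_{k})\,dx$ (the paper reaches it via its prepared identities \eqref{est 41}--\eqref{est 42} rather than $\nabla\times\nabla\times=\nabla\,\mathrm{div}-\Delta$ plus one integration by parts, but the outcome is identical), and the same Lebesgue and Hardy--$BMO$ duality estimates with the div--curl lemma, closed by Gronwall. The only genuine (minor) difference is your observation that the stretching contribution vanishes identically, since $(\omega\cdot\nabla)u_{3}=\partial_{2}u_{3}\,\partial_{1}u_{3}-\partial_{1}u_{3}\,\partial_{2}u_{3}=0$ in the $2\tfrac{1}{2}$-D setting, whereas the paper simply estimates this term in \eqref{est 58}; either way the argument closes.
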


\begin{proof}[Proof of Proposition \ref{Proposition 4.1}] 
For clarity, let us prove Theorem \ref{Theorem 2.3} (2) when $\Delta (\Delta u_{1}) \in L_{T}^{r_{1}} L_{x}^{p_{1}}$ with $\frac{2}{p_{1}} + \frac{2}{r_{1}} \leq 1, p_{1} \in (2,\infty)$ and $\Delta (\Delta u_{2}) \in L_{T}^{2} BMO_{x}$ as the other cases can be proven similarly. We take $L^{2}(\mathbb{R}^{2})$-inner products on \eqref{est 37} with 
$\begin{pmatrix}
(\nabla \times \omega)_{1} & (\nabla \times \omega)_{2} & 0
\end{pmatrix}^{T}$ to deduce 
\begin{equation}\label{est 56}
\sum_{k=1}^{2} \frac{1}{2} \partial_{t} \lVert (\nabla \times \omega)_{k} \rVert_{L^{2}}^{2} + \lVert \nabla (\nabla \times \omega)_{k} \rVert_{L^{2}}^{2} = \sum_{k=1}^{2} \sum_{i=1}^{3} \RomanII_{k,i} 
\end{equation}  
where 
\begin{subequations}\label{est 55}
\begin{align}
& \RomanII_{k,1} \triangleq - \int_{\mathbb{R}^{2}} (\nabla \times ((u\cdot\nabla) \omega))_{k} (\nabla \times \omega)_{k} dx, \\
& \RomanII_{k,2} \triangleq \int_{\mathbb{R}^{2}} (\nabla \times ((\omega \cdot \nabla) u))_{k} (\nabla \times \omega)_{k} dx, \\
& \RomanII_{k,3} \triangleq \int_{\mathbb{R}^{2}} (\nabla \times \nabla \times ((b\cdot\nabla) b))_{k} (\nabla \times \omega)_{k} dx. 
\end{align}
\end{subequations} 
Using \eqref{est 41}-\eqref{est 42} we can rewrite using the fact that $\nabla \times \omega = - \Delta u$ 
\begin{subequations}
\begin{align}
&\sum_{k=1}^{2} \RomanII_{k,1}  =  \int_{\mathbb{R}^{2}} (u\cdot\nabla) \omega_{3} \Delta \omega_{3} dx, \\
& \sum_{k=1}^{2} \RomanII_{k,2} = - \int_{\mathbb{R}^{2}} (\omega \cdot \nabla) u_{3} \Delta \omega_{3} dx, \\
&\sum_{k=1}^{2} \RomanII_{k,3} = -\int_{\mathbb{R}^{2}} 
\begin{pmatrix}
(b\cdot\nabla) b_{1} & (b\cdot\nabla) b_{2} & 0
\end{pmatrix}^{T}
\cdot 
\begin{pmatrix} 
\Delta (\nabla \times \omega)_{1} & \Delta ( \nabla \times \omega)_{2} & 0 
\end{pmatrix}^{T} dx. 
\end{align}
\end{subequations} 
We can estimate $\sum_{k=1}^{2} \RomanII_{k,1}$ and $\sum_{k=1}^{2} \RomanII_{k,2}$ by H$\ddot{\mathrm{o}}$lder's, Gagliardo-Nirenberg, and Young's inequalities, 
\begin{align}
\sum_{k=1}^{2} \RomanII_{k,1} \lesssim& \lVert u \rVert_{L^{\infty}} \lVert \nabla \omega_{3} \rVert_{L^{2}} \lVert \Delta (\partial_{1} u_{2} - \partial_{2} u_{1}) \rVert_{L^{2}} \lesssim \sum_{k,l=1}^{2} \lVert u \rVert_{L^{2}}^{\frac{1}{2}} \lVert \Delta u \rVert_{L^{2}}^{\frac{1}{2}} \lVert \Delta u_{k} \rVert_{L^{2}} \lVert \Delta \nabla u_{l} \rVert_{L^{2}} \nonumber \\
\leq& \frac{1}{4} \sum_{k=1}^{2} \lVert \nabla (\nabla \times \omega)_{k} \rVert_{L^{2}}^{2} + C \lVert u \rVert_{L^{2}} \lVert \Delta u \rVert_{L^{2}} \sum_{k=1}^{2} \lVert (\nabla \times \omega)_{k} \rVert_{L^{2}}^{2}, \label{est 57}
\end{align}
\begin{align}
\sum_{k=1}^{2} \RomanII_{k,2} \lesssim& \lVert \omega \rVert_{L^{4}} \lVert \nabla u_{3} \rVert_{L^{4}} \lVert \Delta \omega_{3} \rVert_{L^{2}} \lesssim \lVert \nabla u \rVert_{L^{2}} \lVert \Delta u \rVert_{L^{2}} \sum_{k=1}^{2} \lVert \Delta \nabla u_{k} \rVert_{L^{2}} \nonumber \\
\leq& \frac{1}{4} \sum_{k=1}^{2} \lVert \nabla (\nabla \times \omega)_{k} \rVert_{L^{2}}^{2}+ C \lVert \nabla u \rVert_{L^{2}}^{2} \lVert \Delta u \rVert_{L^{2}}^{2}. \label{est 58}
\end{align}
For $p_{1} \in (2,\infty)$ we estimate via H$\ddot{\mathrm{o}}$lder's inequality, duality of $\mathcal{H}^{1}(\mathbb{R}^{2})$ and $BMO(\mathbb{R}^{2})$, Gagliardo-Nirenberg inequality, and the div-curl lemma, 
\begin{align}
\sum_{k=1}^{2} \RomanII_{k,3} \lesssim& \lVert b \rVert_{L^{\frac{2p_{1}}{p_{1} -2}}} \lVert \nabla b_{1} \rVert_{L^{2}} \lVert \Delta (\nabla \times \omega)_{1} \rVert_{L^{p_{1}}} + \lVert (b\cdot\nabla) b_{2} \rVert_{\mathcal{H}^{1}} \lVert \Delta (\nabla \times \omega)_{2} \rVert_{BMO} \nonumber \\
\lesssim& \lVert b \rVert_{L^{2}}^{\frac{p_{1} -2}{p_{1}}} \lVert \nabla b \rVert_{L^{2}}^{\frac{2+ p_{1}}{p_{1}}} \lVert \Delta (\Delta u_{1}) \rVert_{L^{p_{1}}} + \lVert b \rVert_{L^{2}} \lVert \nabla b\rVert_{L^{2}} \lVert \Delta (\Delta u_{2} )\rVert_{BMO}. \label{est 59} 
\end{align}
Thus, we can now apply \eqref{est 57}, \eqref{est 58}, and \eqref{est 59} to \eqref{est 56}, integrate over $[0,t]$, and use H$\ddot{\mathrm{o}}$lder's inequality to deduce 
\begin{align}
& \sum_{k=1}^{2} \lVert (\nabla \times \omega)_{k} \rVert_{L^{2}}^{2} + \int_{0}^{t} \lVert \nabla (\nabla \times \omega)_{k} \rVert_{L^{2}}^{2} ds \\
\overset{\eqref{est 30}\eqref{est 46}}{\leq}&\sum_{k=1}^{2}  \lVert (\nabla \times \omega)_{k}(0) \rVert_{L^{2}}^{2} + C( \int_{0}^{t} \sum_{k=1}^{2} \lVert (\nabla \times \omega)_{k} \rVert_{L^{2}}^{2} \lVert \Delta u \rVert_{L^{2}} ds + 1 \nonumber\\
& \hspace{15mm} +  \left(\int_{0}^{t} \lVert \Delta (\Delta u_{1}) \rVert_{L^{p_{1}}}^{\frac{2p_{1}}{p_{1} -2}} ds \right)^{\frac{p_{1} -2}{2p_{1}}} + \left(\int_{0}^{t} \lVert \Delta (\Delta u_{2})\rVert_{BMO}^{2} ds \right)^{\frac{1}{2}} ) \nonumber 
\end{align}
from which Gronwall's inequality completes the proof of Proposition \ref{Proposition 4.1}. 
\end{proof}

By Propositions \ref{Proposition 3.2} and \ref{Proposition 4.1} we realize that for both $k \in \{1,2\}$ 
\begin{equation}\label{est 76}
j_{k} \overset{\eqref{est 32}}{=} z_{k}^{2} - (\nabla \times \omega)_{k} \in L_{T}^{\infty} L_{x}^{2} \cap L_{T}^{2} \dot{H}_{x}^{1}. 
\end{equation} 
We can apply Gagliardo-Nirenberg inequality to deduce from  \eqref{est 76} that for both $k \in \{1,2\}$ 
\begin{equation}
\int_{0}^{T} \lVert j_{k} \rVert_{L^{4}}^{4} ds \lesssim \sup_{s\in [0,t]} \lVert j_{k} (s) \rVert_{L^{2}}^{2} \int_{0}^{T} \lVert \nabla j_{k}  \rVert_{L^{2}}^{2} ds \lesssim 1. 
\end{equation} 
Therefore, $j_{k} \in L_{T}^{4}L_{x}^{4}$ for both $k \in \{1,2\}$, allowing us to apply Theorem \ref{Theorem 2.1} to deduce \eqref{est 27} as desired. The proof of Theorem \ref{Theorem 2.3} (2) is now complete. 
 
 \section{Appendix: Proof of Proposition \ref{Proposition 3.4}} 
We apply $D^{\alpha}$ on \eqref{est 78} with $\alpha$ being any multi-index such that $\lvert \alpha \rvert \leq m$, take $L^{2}(\mathbb{R}^{3})$-inner products with $(D^{\alpha}u, D^{\alpha} b)$, sum over all such $\alpha$ and obtain an identity of 
\begin{align}
\frac{1}{2} \partial_{t} (\lVert u \rVert_{H^{m}}^{2} + \lVert b \rVert_{H^{m}}^{2}) + \nu \lVert \Lambda^{\frac{5}{4}} u \rVert_{H^{m}}^{2} + \eta_{h} \lVert \Lambda^{\frac{7}{4}} b_{h} \rVert_{H^{m}}^{2} + \eta_{v} \lVert \Lambda^{\frac{5}{4}} b_{v} \rVert_{H^{m}}^{2} = \sum_{i=1}^{4} \RomanIII_{i} \label{est 84} 
\end{align}
where 
\begin{subequations}
\begin{align}
& \RomanIII_{1} \triangleq -\sum_{\lvert \alpha \rvert \leq m} \int_{\mathbb{R}^{3}} D^{\alpha} [(u\cdot\nabla) u]\cdot D^{\alpha} u dx, \\
& \RomanIII_{2} \triangleq -\sum_{\lvert \alpha \rvert \leq m} \int_{\mathbb{R}^{3}} D^{\alpha} [(u\cdot\nabla) b] \cdot D^{\alpha} b dx,\\
& \RomanIII_{3} \triangleq \sum_{\lvert \alpha \rvert \leq m} \int_{\mathbb{R}^{3}} D^{\alpha} [( b\cdot\nabla) b] \cdot D^{\alpha} u + D^{\alpha} [ (b\cdot\nabla) u] \cdot D^{\alpha} b dx,\\
& \RomanIII_{4} \triangleq - \epsilon \sum_{\lvert \alpha \rvert \leq m} \int_{\mathbb{R}^{3}} D^{\alpha} [ ( j \times b) ] \cdot D^{\alpha} j dx.
\end{align}
\end{subequations}
Making use of the well-known identities, 
\begin{align*}
& \int_{\mathbb{R}^{3}} (u\cdot\nabla) D^{\alpha} u \cdot D^{\alpha} u dx = 0, \hspace{5mm}  \int_{\mathbb{R}^{3}} (u\cdot\nabla) D^{\alpha} b \cdot D^{\alpha} b dx = 0, \\
& \int_{\mathbb{R}^{3}} (b\cdot\nabla) D^{\alpha} b \cdot D^{\alpha} u + (b\cdot\nabla) D^{\alpha} u  \cdot D^{\alpha} b dx = 0, \\
& \int_{\mathbb{R}^{3}} D^{\alpha} j \times b \cdot D^{\alpha} j dx \overset{\eqref{key}}{=}0, 
\end{align*}
we can estimate by using H$\ddot{\mathrm{o}}$lder's inequality, the Sobolev embeddings $\dot{H}^{\frac{1}{4}} (\mathbb{R}^{3}) \hookrightarrow L^{\frac{12}{5}}(\mathbb{R}^{3})$ and $\dot{H}^{\frac{5}{4}} (\mathbb{R}^{3}) \hookrightarrow L^{12}(\mathbb{R}^{3})$, and Gagliardo-Nirenberg inequality 
\begin{subequations}\label{est 83} 
\begin{align}
&  \RomanIII_{1}  \lesssim \lVert u \rVert_{H^{m}} \lVert \Lambda^{\frac{5}{4}} u \rVert_{L^{2}} \lVert \Lambda^{\frac{5}{4}} u \rVert_{H^{m}},  \\
&  \RomanIII_{2} \lesssim (\lVert \Lambda^{\frac{5}{4}} u \rVert_{H^{m}} \lVert \Lambda^{\frac{5}{4}} b\rVert_{L^{2}} + \lVert \Lambda^{\frac{5}{4}} u \rVert_{L^{2}} \lVert \Lambda^{\frac{5}{4}} b \rVert_{H^{m}}) \lVert b \rVert_{H^{m}}, \\
&  \RomanIII_{3} \lesssim (\lVert u \rVert_{H^{m}} + \lVert b \rVert_{H^{m}}) (\lVert \Lambda^{\frac{5}{4}} u \rVert_{H^{m}} + \lVert \Lambda^{\frac{5}{4}} b \rVert_{H^{m}})(\lVert \Lambda^{\frac{5}{4}} u \rVert_{L^{2}} + \lVert \Lambda^{\frac{5}{4}} b \rVert_{L^{2}}), \\
& \RomanIII_{4}\lesssim  \lVert \Lambda^{\frac{9}{4}} b \rVert_{L^{2}} \lVert b \rVert_{H^{m}} \lVert \Lambda^{\frac{5}{4}} b \rVert_{H^{m}}.
\end{align}
\end{subequations} 
Applying \eqref{est 83} to \eqref{est 84} and relying on Young's inequality give us for any $\delta > 0$, 
\begin{align}
&\frac{1}{2} \partial_{t} (\lVert u \rVert_{H^{m}}^{2} + \lVert b \rVert_{H^{m}}^{2}) + \nu \lVert \Lambda^{\frac{5}{4}} u \rVert_{H^{m}}^{2} + \eta_{h} \lVert \Lambda^{\frac{7}{4}} b_{h} \rVert_{H^{m}}^{2} + \eta_{v} \lVert \Lambda^{\frac{5}{4}} b_{v} \rVert_{H^{m}}^{2}  \nonumber \\
\leq& \frac{\nu}{2} \lVert \Lambda^{\frac{5}{4}} u \rVert_{H^{m}}^{2} + \delta \lVert \Lambda^{\frac{5}{4}} b \rVert_{H^{m}}^{2} + C(\lVert u \rVert_{H^{m}}^{2} + \lVert b \rVert_{H^{m}}^{2}) ( \lVert \Lambda^{\frac{5}{4}} u \rVert_{L^{2}}^{2} + \lVert \Lambda^{\frac{5}{4}} b \rVert_{L^{2}}^{2} + \lVert \Lambda^{\frac{9}{4}} b \rVert_{L^{2}}^{2})  \label{est 103} 
\end{align} 
where Plancherel theorem and Young's inequality allow us to write 
\begin{align*}
\delta \lVert \Lambda^{\frac{5}{4}} b \rVert_{H^{m}}^{2} \leq 2 \delta ( \lVert \Lambda^{\frac{5}{4}} b_{h} \rVert_{H^{m}}^{2} + \lVert \Lambda^{\frac{5}{4}} b_{v} \rVert_{H^{m}}^{2}) \leq 2 \delta ( C ( \lVert b_{h} \rVert_{L^{2}}^{2} + \lVert \Lambda^{\frac{7}{4}} b_{h} \rVert_{H^{m}}^{2}) + \lVert \Lambda^{\frac{5}{4}} b_{v} \rVert_{H^{m}}^{2}),
\end{align*}
as well as $ \lVert \Lambda^{\frac{5}{4}} b \rVert_{L^{2}}^{2} \lesssim \lVert b \rVert_{L^{2}}^{2} + \lVert \Lambda^{\frac{9}{4}} b \rVert_{L^{2}}^{2}$. Thus, taking $\delta > 0$ sufficiently small and applying Gronwall's inequality on \eqref{est 103} lead us to \eqref{est 85} and complete the proof of Proposition \ref{Proposition 3.4}. 

\section*{Acknowledgements}
The authors express deep gratitude to Prof. Giorgio Bornia and Prof. Mimi Dai for valuable discussions.  

\section*{Statements and Declarations}
The authors declare that they have no conflict of interest.

\end{document}